\newtheorem{theorem}{Theorem}[section]
\newtheorem{lemma}[theorem]{Lemma}
\newtheorem{proposition}[theorem]{Proposition}
\newtheorem{corollary}[theorem]{Corollary}
\newtheorem{question}[theorem]{Question}
\newtheorem{ex}[theorem]{Example}
\theoremstyle{definition}
\newtheorem{definition}[theorem]{Definition}
\newtheorem{remark}[theorem]{Remark}
\newcommand{\Z}{\mathbb{Z}}
\newcommand{\N}{\mathbb{N}}
\begin{document}
\title{Critical and injective modules over skew polynomial rings}
\author{Ken Brown, Paula A.A.B. Carvalho and Jerzy Matczuk}
\date{\today}
\maketitle

\abstract{Let $R$ be a commutative local $k$-algebra of Krull dimension one, where $k$ is a field.   Let $\alpha$ be a $k$-algebra automorphism of $R$, and define $S$ to be the skew polynomial algebra $R[\theta; \alpha]$. We offer, under some additional assumptions on $R$, a criterion for $S$ to have injective hulls of all simple $S$-modules  locally Artinian - that is, for $S$ to satisfy property $(\diamond)$. It is easy and well known that if $\alpha$ is of finite order, then $S$ has this property, but in order to get the criterion when $\alpha$ has infinite order we found it necessary to classify all cyclic (Krull) critical $S$-modules in this case, a result which may be of independent interest. With the help of the above we show that  $\widehat{S}=k[[X]][\theta, \alpha]$ satisfies $(\diamond)$ for all $k$-algebra automorphisms $\alpha$ of $k[[X]]$.}

\noindent {\it Keywords:} Injective module, Noetherian ring, simple module, skew polynomial ring.
\medskip

\noindent{2010 {\it Mathematics Subject Classification.} 16D50, 16P40, 16S35}

\section{Introduction}\label{intro}
\subsection{Motivating context}\label{context} A Noetherian ring $S$ is said to satisfy property $(\diamond)$ if the injective hull $E_S(V)$ of every simple right or left $S$-module $V$ is locally Artinian, meaning that every finitely generated submodule of $E_S(V)$ is Artinian. This paper forms part of a project whose ultimate aim is to determine when the skew polynomial ring $S :=  R[\theta; \alpha]$ satisfies $(\diamond)$ when $R$ is commutative Noetherian and $\alpha$ is an automorphism of $R$ (so that $\theta r = \alpha(r)\theta$ for $r \in R$). In our earlier paper \cite{BCM} we showed that the key to $(\diamond)$ for such skew polynomial rings is the case where $S$ is primitive. Necessary and sufficient conditions for primitivity are known \cite{JurekLeroy}, and recalled in Theorem \ref{JurekLeroy} below. In particular it is easy to see that $R[\theta; \alpha]$ is never primitive when $|\alpha| < \infty$. For the primitive case we proved in \cite[Theorem 5.4]{BCM}:
\begin{theorem}\label{old} Let $R$ be a commutative Noetherian ring, $\alpha$ an automorphism of $R$. Set $S := R[\theta; \alpha]$ as above, and assume that $S$ is primitive.
\begin{enumerate}
\item[(1)] If $R$ is Artinian then $S$ satisfies $(\diamond)$.
\item[(2)] Suppose that $R$ contains an uncountable field. If the Krull dimension of $R$ is at least 2, or if $\mathrm{Spec}(R)$ is uncountable, then $S$ does not satisfy $(\diamond)$.
\end{enumerate}
\end{theorem}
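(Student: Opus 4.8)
For part (1) I would first reduce to the case where $R$ is a field, and then exploit that a skew polynomial ring over a field is a principal ideal domain. Since $S$ is primitive it is prime, hence has no non-trivial central idempotents. As $R$ is commutative Artinian it is a finite product of local Artinian rings, and primeness of $S$ forces $\alpha$ to act transitively on these local factors; picking a primitive idempotent $e_0$ of $R$ and its $\alpha$-conjugates $e_0,\dots,e_{n-1}$ (pairwise orthogonal, summing to $1$), the standard idempotent argument identifies $S$ up to Morita equivalence with $e_0 S e_0 \cong R_1[\psi;\beta]$, where $R_1$ is one local factor and $\beta = \alpha^n|_{R_1}$. Because $(\diamond)$ is a Morita-invariant property (it is phrased via finite generation, injectivity and the Artinian condition), I may replace $S$ by this ring and so assume $R$ is local Artinian with maximal ideal $\mathfrak m$. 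Now $\mathfrak m$ is $\alpha$-invariant (being the only maximal ideal) and nilpotent, so $\mathfrak m S$ is a nilpotent two-sided ideal; such an ideal annihilates every simple $S$-module, and in particular the faithful one provided by primitivity, whence $\mathfrak m S = 0$ and $R = K$ is a field.

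With $S = K[\theta;\alpha]$ I would then argue: the two-sided division algorithm makes $S$ a Noetherian principal (left and right) ideal domain which is not a division ring, so every simple $S$-module $V$ is a torsion module, and --- $V$ being essential in $E_S(V)$ over the domain $S$ --- so is $E_S(V)$. Hence every finitely generated submodule $U$ of $E_S(V)$ is finitely generated and torsion; by elementary divisor theory $U \cong \bigoplus_i S/d_i S$ with the $d_i$ non-zero, and each $S/d_i S$ has $K$-dimension $\deg d_i < \infty$, so $U$ has finite length. Thus $E_S(V)$ is locally Artinian and $S$ satisfies $(\diamond)$.

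For part (2) I would produce, for a suitable simple $S$-module $V$, a finitely generated non-Artinian submodule of $E_S(V)$, modelled on the case $R = k[x]$, $\alpha(x) = x+1$. There the ``shift module'' $V = R$ with $\theta$ acting as $\alpha$ is a faithful simple $S$-module, essential as an $R$-module --- hence as an $S$-module --- in $\mathrm{Frac}(R) = k(x)$ (an $S$-module via $\theta = \alpha$). The cyclic submodule $S\cdot x^{-1} = \sum_{j\ge 0}k[x]\cdot(x+j)^{-1}$ contains $V = R$ (as $x\cdot x^{-1}=1$), and it is not Artinian because it contains the strictly descending chain $S\cdot x^{-1}\supsetneq S\cdot(x+1)^{-1}\supsetneq S\cdot(x+2)^{-1}\supsetneq\cdots$, strictness coming from comparison of localizations at the distinct height-one primes $(x+n)$; so $E_S(V)$ is not locally Artinian. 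To run this for general $R$ I would, using that $(\diamond)$ passes to every factor ring $S/\mathcal J$, that $S/IS\cong (R/I)[\theta;\bar\alpha]$ for $\alpha$-invariant $I$, and the primitivity criterion (Theorem \ref{JurekLeroy}), reduce to $R$ an $\alpha$-simple Noetherian domain with $\alpha$ of infinite order on $Q=\mathrm{Frac}(R)$ and $V = R_\alpha$ the shift module (sitting essentially in $Q_\alpha$); then find $0\neq m\in R$ one of whose height-one prime divisors lies outside the forward $\alpha$-orbit of every prime divisor of $(m)$, so that $S m^{-1}\supsetneq S\alpha(m)^{-1}\supsetneq\cdots$ is strict and $S m^{-1}\subseteq Q_\alpha\subseteq E_S(V)$ is finitely generated but not Artinian.

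The hard part will be this last step of part (2): performing the reductions without destroying the hypotheses, and then exhibiting the element $m$. This is exactly where the standing assumptions enter --- Krull dimension $\ge 2$ or uncountable $\mathrm{Spec}(R)$ makes the prime spectrum large enough that a prime with an ``unentangled'' infinite $\alpha$-orbit survives the removal of the countably many forbidden orbits attached to any finite set of prime divisors, and it is the uncountability of $k$ that makes this counting work (and stops the reductions from collapsing $R$ to something too small). By contrast, checking that the shift module is simple over $S$ itself (not just over the localization $R[\theta^{\pm1};\alpha]$) and verifying the localization comparisons are routine.
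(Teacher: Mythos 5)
You should first note that this paper does not prove Theorem \ref{old} at all: it is quoted from \cite[Theorem 5.4]{BCM} (and restated as Theorem \ref{primitivecase}), so your attempt can only be compared with that cited argument; judged on its own terms it has two genuine gaps. In part (1), the reduction to a field via ``the standard idempotent argument'' is invalid, because $e_0$ is not a full idempotent of $S=R[\theta;\alpha]$: $\theta S=S\theta$ is a two-sided ideal with $S/\theta S\cong R$, and the image of $Se_0S$ in this quotient is $Re_0\neq R$ as soon as there are at least two local factors, so $Se_0S\neq S$. Fullness is exactly what the bimodules $Se_0$ and $e_0S$ need to implement a Morita equivalence (it does hold in the skew Laurent ring $R[\theta^{\pm 1};\alpha]$, where $e_1=\theta e_0\theta^{-1}$, but $\theta$ is not invertible in $S$). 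Moreover the case you are trying to dispose of genuinely occurs: primitivity of $S$ forces $R$ to be $\alpha$-prime, hence (being Artinian) a finite product of fields transitively permuted by $\alpha$, with $\alpha$ of infinite order by Theorem \ref{JurekLeroy}; a product of $n\geq 2$ fields is perfectly compatible with these hypotheses. So your argument establishes (1) only when $R$ is a field --- there the PID argument (every simple is torsion, finitely generated torsion modules over $K[\theta;\alpha]$ have finite length) is correct --- and the semisimple-but-not-simple case is left untreated.

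Part (2) is a programme rather than a proof, as you yourself concede: everything hinges on (i) reducing to an $\alpha$-simple Noetherian domain without destroying the hypotheses (being $\alpha$-special does not obviously supply an $\alpha$-stable prime with $\alpha$-simple quotient, so it is unclear such a reduction exists), and (ii) exhibiting the element $m$ whose $\alpha$-orbit yields a strictly descending chain --- and (ii) is precisely where the uncountability of the base field and the hypothesis ``$\mathrm{Kdim}(R)\geq 2$ or $\mathrm{Spec}(R)$ uncountable'' must be used. Neither step is carried out. For comparison, \cite{BCM} proceeds differently: Proposition \ref{primprop} (= \cite[Proposition 5.3]{BCM}) shows that $(\diamond)$ fails whenever the localisation $R\mathcal{A}^{-1}$ at the multiplicative set generated by the $\alpha$-orbit of an $\alpha$-special element $a$ is not a field (the non-Artinian finitely generated essential extension is built from negative powers of $a$, very much like your $x^{-1}$-chain in the $k[x]$, $\alpha(x)=x+1$ model, which is correct up to the right-module convention $v\cdot\theta=\alpha^{-1}(v)$), and the uncountability hypotheses are then used to verify that $R\mathcal{A}^{-1}$ is not a field. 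That verification, or some substitute for it, is exactly the content your sketch leaves open.
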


It's clear that a crucial case resting in the gap between (1) and (2) of Theorem \ref{old} occurs when $R$ is a local Noetherian domain of Krull dimension 1 and $\alpha$ has infinite order. This is the situation we address in the present paper.

\medskip

\subsection{Critical modules}\label{rep} Let $k$ be an arbitrary field.  Let $R$ be a discrete valuation ring which is a $k$-algebra with maximal ideal $M = XR$, with residue field $R/M = k$. Let $\alpha$ be a $k$-algebra automorphism of $R$, so $\alpha (X) = qX$ for some unit $q$ of $R$. Set $S = R[\theta; \alpha]$, and assume that $|\alpha| = \infty$.
Recall that $S$ has Krull dimension 2 \cite[Proposition 6.5.4(i)]{McCR}. Thus to determine the validity or otherwise of $(\diamond)$ for $S$ it is first necessary to study the simple and the 1-critical $S$-modules. This is done in $\S\S$\ref{injective} and $\S\S$\ref{criticals}, using the fact that the localisation $T := S\langle X^{-1} \rangle$ of $S$ is a principal right and left ideal domain, coupled with work of Bavula and Van Oystaeyen \cite{Bavula}. The following result gives an abbreviated version of what we discover. The definitions of the sets {\bf (A)}, {\bf (B)} and {\bf (C)}, of irreducible elements of $S$, can be found in $\S$\ref{taxonomy}. Additionally, set $ \textbf{(F)}  :=  \{f \in k[\theta] : f \textit{ monic irreducible}\}, $ which is a subset of {\bf (C)}. Recall that a nonzero module $M$ with Krull dimension $\mathrm{Kdim}(M)$ is \emph{critical} if $\mathrm{Kdim}(N) < \mathrm{Kdim}(M)$ for all proper factors $N$ of $M$; every nonzero module with Krull dimension contains a critical submodule, \cite[Lemma 6.2.10]{McCR}. We say that two critical $S$-modules $M$ and $N$ are \emph{hull similar} if they contain a nonzero isomorphic submodule. The following result summarises parts of Theorems \ref{Bavula} and \ref{onecrit}.

\begin{theorem}\label{crits} The irreducible elements of $S$ can be divided into 3 disjoint sets {\bf (A)}, {\bf (B)} and {\bf (C)} with the following properties.
\begin{itemize}
\item[(1)] There are bijective correspondences between these sets, together with {\bf (F)}, and the equivalence classes of finitely generated critical torsion right $S$-modules as follows.
\begin{enumerate}
\item[]\qquad {\bf (F)} $\longleftrightarrow \{ V : V \textit{ simple, } \mathrm{dim}_k(V) < \infty \}/\sim.$
\item[]\qquad {\bf (A)} $\longleftrightarrow \{V : V \textit{ unfaithful finitely generated 1-critical}\}/\sim.$
\item[]\qquad {\bf (B)} $\longleftrightarrow \{V : V \textit{ simple, } \mathrm{dim}_k(V) = \infty \}/\sim.$
\item[]\qquad {\bf (C)} $\longleftrightarrow \{V : V \textit{ faithful finitely generated 1-critical}\}/\sim.$
\end{enumerate}
\item[(2)] The equivalence relation $\sim$ is isomorphism for {\bf (F)}, {\bf (A)} and {\bf (B)} and hull similarity for {\bf (C)}.
\item[(3)] In cases {\bf (A)}, {\bf (B)} and {\bf (C)} the map from left to right takes the irreducible element $c$ to the module $S/cS$. In case {\bf (F)} the element $f$ of {\bf (F)} is sent to the module $S/(XS + fS)$.
\end{itemize}
\end{theorem}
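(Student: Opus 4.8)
The strategy is to pass to the localisation $T := S\langle X^{-1}\rangle$, which by hypothesis is a principal ideal domain, and to pull back Bavula and Van Oystaeyen's classification of finitely generated torsion $T$-modules to $S$. First I would recall from \S\ref{injective}--\S\ref{criticals} the structure of $T$: every finitely generated torsion right $T$-module is a direct sum of cyclic modules $T/cT$ with $c$ irreducible, and two such are isomorphic iff the irreducible elements are similar (associate up to the standard equivalence on a PID). The simple $T$-modules are thus parametrised by similarity classes of irreducibles of $T$, and via \cite{Bavula} these sort into the families that, on restriction, will give the sets {\bf (A)}, {\bf (B)}, {\bf (C)}; this is essentially the content of Theorem \ref{Bavula}, which I am entitled to assume.

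Next I would set up the two functors relating $S$-mod and $T$-mod: localisation $M \mapsto M\otimes_S T$ and restriction. The key dichotomy for a finitely generated critical torsion right $S$-module $V$ is whether $X$ acts injectively (equivalently, torsion-freely) on $V$ or not. If $X$ does not act injectively, then $\ker(X|_V)$ is a nonzero $S$-submodule on which $X=0$, so $V$ (being critical, hence uniform) is actually a module over $S/XS \cong k[\theta]$ twisted appropriately — here one checks $\alpha(X)=qX$ forces the relevant identification — and the finitely generated critical $k[\theta]$-modules are exactly $k[\theta]/f k[\theta]$ for $f$ monic irreducible, giving the bijection with {\bf (F)} and the formula $V \cong S/(XS+fS)$ in part (3); these $V$ are simple and finite-dimensional over $k$, matching the first line of (1). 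If instead $X$ acts injectively on $V$, then $V$ embeds in $V\langle X^{-1}\rangle = V\otimes_S T$, which is a nonzero finitely generated torsion $T$-module; criticality of $V$ plus the fact that $S$ and $T$ have the same Krull-dimension-one "slice" forces $V\otimes_S T$ to be $1$-critical over $T$, hence a simple $T$-module $T/cT$. Conversely, for an irreducible $c \in S$ (in one of the families {\bf (A)}, {\bf (B)}, {\bf (C)}) one shows $S/cS$ is $1$-critical: $X$ acts injectively because $c$ is not associate to $X$, localising gives the simple $T$-module $T/cT$, and a Krull-dimension computation (using that proper factors of $S/cS$ are either finite-dimensional over $k$ or become proper factors of the simple $T/cT$) yields criticality.

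With the two directions of the correspondence in hand, the remaining work is to pin down the equivalence relations in (2). Restricting a simple $T$-module $T/cT$ back to $S$ and intersecting with $S$ (i.e. taking $S$-submodules of the form $(S/cS) \cap (\text{stuff})$) shows two $S$-modules $S/cS$, $S/c'S$ are hull similar iff $T/cT \cong T/c'T$ iff $c$, $c'$ are similar in $T$; this gives the map from the similarity classes. The point is then that within families {\bf (A)} and {\bf (B)} the $S$-modules in question are already simple — for {\bf (B)} by construction (infinite-dimensional simples), for {\bf (A)} because unfaithful $1$-critical modules, having nonzero annihilator, are modules over a proper factor ring of $S$ on which the earlier Artinian-case analysis (Theorem \ref{old}(1), or the direct argument) forces simplicity — so for these hull similarity collapses to isomorphism, whereas for {\bf (C)} the faithful $1$-criticals genuinely need not be simple and only hull similarity is available. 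I expect the main obstacle to be precisely this last point: proving that in case {\bf (C)} hull similarity is the correct (and the finest attainable) invariant, i.e. controlling the non-simple faithful $1$-critical modules and showing every similarity class of irreducibles is realised and distinguished — this is where the detailed taxonomy of \S\ref{taxonomy} and the structure of $T$ as a PID must be used most carefully, together with a verification that the faithful irreducibles of $S$ do not collapse onto the unfaithful ones or onto $X$ under similarity in $T$.
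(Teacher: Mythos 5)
Your overall strategy (pass to the PID $T$, use Theorem \ref{Bavula} for the simple modules, and match similarity classes of irreducibles with classes of critical modules) is the paper's strategy, but two of your steps are genuinely wrong or unprovable as stated. First, the branch where $X$ acts non-injectively: you assert that the finitely generated critical $k[\theta]$-modules are exactly the $k[\theta]/fk[\theta]$ with $f$ monic irreducible, and later that unfaithful $1$-critical modules are simple ``by the Artinian-case analysis''. Both claims are false: $k[\theta]$ itself (equivalently $S/XS$) and $R \cong S/\theta S$ are unfaithful, finitely generated and $1$-critical but not simple, and these are precisely the modules that realise the set ${\bf (A)}$; Theorem \ref{old}(1) is a statement about $(\diamond)$ for Artinian coefficient rings and says nothing about simplicity here (the factor rings $k[\theta]$ and $R$ are not even Artinian). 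What is actually needed for the ${\bf (A)}$ line — and for knowing that isomorphism is the right equivalence there — is that \emph{every} unfaithful finitely generated $1$-critical module is isomorphic to $S/XS$ or to $S/\theta S$. This is Theorem \ref{onecrit}(1)(a), and its proof is not a formality: one uses the Artin--Rees property of $XS$ to get $MX^t=0$ and then a separate argument (the relation $\theta X = qX\theta$ and the co-Artinian ideal $XS+\widehat{f}S$) to force $t=1$, after which $M$ is a torsion-free uniform finitely generated $k[\theta]$-module, hence $\cong k[\theta]$. Your sketch skips all of this, and in its present form would misclassify $S/XS$ into the family ${\bf (F)}$.

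Second, in the $\mathcal{X}$-torsion-free branch you claim that criticality of $V$ ``forces $V\otimes_S T$ to be $\ldots$ a simple $T$-module $T/cT$''. This cannot be proved in the generality of the theorem and is exactly where the paper is more careful. A uniform extension $M$ of $S/zS$ ($z$ of type ${\bf (C)}$) by an infinite-dimensional simple module is faithful and $1$-critical (its proper factors are finite-dimensional by Lemma \ref{typee}), yet $M\otimes_S T$ has length $2$; whether such modules exist is precisely condition (d) of Theorem \ref{target}, i.e.\ it is equivalent to the failure of $(\diamond)$, and it is left open for a general DVR $R$. Accordingly the paper only proves, and only needs, that a faithful finitely generated $1$-critical $V$ \emph{contains} a copy of $S/zS$ for a type ${\bf (C)}$ irreducible $z$: choose $0\neq m\in V$ and an irreducible $z\in S\setminus XS$ with $mz=0$, rule out types ${\bf (A)}$ and ${\bf (B)}$ for $z$, and use $\mathrm{socle}(V)=0$ together with the $1$-criticality and faithfulness of $S/zS$ (Lemma \ref{typee}, which itself rests on Lemma \ref{split}) to embed $S/zS$ in $V$; the equivalence for ${\bf (C)}$ is then hull similarity, transferred to similarity in $T$ via $E_S(V)=E_T(V\otimes_S T)$ (Proposition \ref{samehull}) and Theorem \ref{Jacobson}. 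If you want to keep your formulation, the map must be defined via the simple socle of the uniform finite-length $T$-module $V\otimes_S T$ (or via the embedded $S/zS$), not by asserting that $V\otimes_S T$ is itself simple. With these two repairs your outline becomes essentially the paper's proof; without them the ${\bf (A)}$ and ${\bf (C)}$ lines of the theorem are not established.
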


\medskip

\subsection{Monoid commutativity}\label{monoid} Armed with the classification of Theorem \ref{crits} we use elementary homological algebra and analysis of non-split extensions of critical $S-$ and $T-$modules to deduce a criterion for $S$ to satisfy $(\diamond)$ in terms of what we call the \emph{monoid commutativity} of the sets {\bf (B)} and {\bf (C)} of irreducible elements of $S$. The following result is part of Theorem \ref{target}.

\begin{theorem}\label{criterion} Let $S$ be as in $\S$\ref{rep}. Then the following are equivalent.
\begin{enumerate}
\item[(1)]  $S$ satisfies $(\diamond)$ for right modules.
\item[(2)] Given irreducible elements $b$ and $c$ of $S$, respectively of types {\bf (B)} and {\bf (C)}, there exist irreducible elements $b'$ and $c'$ of $S$, respectively of types {\bf (B)} and {\bf (C)}, such that
\begin{equation}\label{final} cb \; = \; b'c'.
\end{equation}
\end{enumerate}
\end{theorem}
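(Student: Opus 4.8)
The plan is to decide, for each simple right $S$-module $V$, whether $E_S(V)$ is locally Artinian, using the classification of Theorem~\ref{crits}, by which $V$ has type {\bf (F)} or {\bf (B)}. Type {\bf (F)} can be dealt with unconditionally: such a $V$ is annihilated by a maximal ideal $\mathfrak m\supseteq XS$ with $S/\mathfrak m$ finite-dimensional over $k$; since $X$ is a normal element and $S/XS=k[\theta]$, every prime in the clique of $\mathfrak m$ also has finite-dimensional quotient, so Jategaonkar's localisation theory shows every finitely generated submodule of $E_S(V)$ has finite length. Hence type {\bf (F)} imposes no condition, and from now on $V=S/bS$ with $b$ of type {\bf (B)}.

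Next we pass to $T=S\langle X^{-1}\rangle$. As $V$ is infinite-dimensional and simple, $X$ acts invertibly on it, so $V$ is a simple module over the principal ideal domain $T$. Flatness of $T$ as a left $S$-module gives $\operatorname{Hom}_S(M,I)\cong\operatorname{Hom}_T(M\otimes_S T,I)$ for every $T$-module $I$, whence $E_T(V)$ is $S$-injective; it is also an essential extension of $V$ over $S$, and since $E_S(V)$ is indecomposable, $E_S(V)=E_T(V)$. Consequently $\operatorname{Kdim}_S E_S(V)\le 1$, so $E_S(V)$ fails to be locally Artinian precisely when it contains a finitely generated (over $S$) submodule of Krull dimension $1$. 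Chasing the critical and socle structure of such a submodule, and using that $V$ is $X$-divisible while the type {\bf (A)} and {\bf (F)} modules are $X$-torsion — so that $\operatorname{Hom}_S$ and $\operatorname{Ext}^1_S$ between these and $V$ vanish (they localise to $0$ over $T$) — we reduce the obstruction to the existence of a non-split essential extension of right $S$-modules
\[ 0 \longrightarrow S/bS \longrightarrow E \longrightarrow S/cS \longrightarrow 0 \]
with $c$ of type {\bf (C)}. A useful point here is that every \emph{non-zero} class of $\operatorname{Ext}^1_S(S/cS,S/bS)$ already yields an essential extension: restriction to a submodule $(S/cS)X^n$ of $S/cS$ becomes, after inverting $X$, an isomorphism of $\operatorname{Ext}$-groups over $T$, hence is injective. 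Thus $(\diamond)$ holds for right $S$-modules if and only if $\operatorname{Ext}^1_S(S/cS,S/bS)=0$ for every $b$ of type {\bf (B)} and every $c$ of type {\bf (C)}.

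It remains to turn this vanishing into the identity \eqref{final}. The resolution $0\to S\xrightarrow{\,s\mapsto cs\,}S\to S/cS\to 0$ gives $\operatorname{Ext}^1_S(S/cS,S/bS)\cong S/(Sc+bS)$, and the extension arising from the ideal chain $cbS\subset cS\subset S$, with $cS/cbS\cong S/bS$ and $(S/cbS)/(cS/cbS)\cong S/cS$, represents a distinguished class. The intended dictionary is that this group vanishes exactly when $S/cbS$ decomposes as an internal direct sum $b'S/cbS\oplus cS/cbS$ for a factorisation $cb=b'c'$: then $cS/cbS\cong S/bS$ is the simple summand, whence $b'S/cbS\cong T/c'T\cong T/cT$ and $(S/cbS)/(b'S/cbS)\cong T/bT$, and — reading Theorem~\ref{crits} backwards — an atom of $T$ whose module is $T/bT$ (resp. $T/cT$) is again of type {\bf (B)} (resp. {\bf (C)}). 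Conversely, given such a swap, $b'S/cbS\cong S/c'S$ is a non-zero submodule disjoint from $cS/cbS\cong S/bS$ (type {\bf (C)} modules have no simple submodule), so $S/bS$ is not essential in $S/cbS$; since every non-zero $\operatorname{Ext}$-class is essential, the class of $S/cbS$, and with it the whole group, must be zero. Matching the two sides rests on the localisation isomorphism $\operatorname{Ext}^1_S(S/cS,S/bS)\cong\operatorname{Ext}^1_T(T/cT,T/bT)$ and on the identification of types {\bf (B)}, {\bf (C)} with isomorphism classes of simple $T$-modules.

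The main obstacle will be exactly this last equivalence. In one direction one needs the structure theory of the PID $T$ to see that $\operatorname{Ext}^1_T(T/cT,T/bT)\ne 0$ already forces the canonical extension $S/cbS$ itself to be non-split — no non-split extension can survive while $S/cbS$ splits — because all length-two extensions here are governed by factorisations of $cb$ up to left and right associates. In the other direction one must identify carefully the $S$-theoretic types {\bf (B)} and {\bf (C)} with isomorphism classes of $T$-modules; this is where hull similarity (rather than isomorphism) on {\bf (C)} and the explicit description $S/cS$ of the critical modules, both from Theorem~\ref{crits}, get genuinely used — in particular one must control how the relevant submodules of $S/cS$, and hence the $\alpha$-twists $X^{-n}cX^{n}$, enter the essentiality test. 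Once the equivalence is in place, quantifying over all $b$ of type {\bf (B)} and all $c$ of type {\bf (C)} yields the statement.
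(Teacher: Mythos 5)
Your route parallels the paper's: reduce to the faithful (type $(\mathbf{B})$) simples, identify $E_S(V)$ with $E_T(V)$, show that right $(\diamond)$ fails exactly when $\mathrm{Ext}^1_S(S/cS,S/bS)\neq 0$ for some $b$ of type $(\mathbf{B})$ and $c$ of type $(\mathbf{C})$ (this is the content of Proposition \ref{torsion}, Corollary \ref{yip} and Theorem \ref{gather}), and then convert that vanishing into the factorisation condition (Theorem \ref{target}, $(c)\Leftrightarrow(e)$). However, two steps are genuinely incomplete. First, in the reduction you dismiss the type $(\mathbf{A})$ and $(\mathbf{F})$ criticals on the grounds that they are $X$-torsion and ``localise to $0$ over $T$''. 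That is false for the type $(\mathbf{A})$ critical $S/\theta S$: it is $\mathcal{X}$-torsion free and $S/\theta S\otimes_S T=T/\theta T\neq 0$, so Hom and Ext into $V$ do not vanish for this formal reason. The possibility of a $\theta$-critical subquotient inside a finitely generated submodule of $E_T(V)$ must be excluded by a separate argument; the paper does this explicitly (the $\theta d=d'\theta$ computation in the proof of Proposition \ref{torsion}(5), and Corollary \ref{heck} inside Theorem \ref{target}), while the genuinely $X$-torsion (finite-dimensional) subquotients that do occur are handled via Lemma \ref{torsion1} and Lemma \ref{split}. Your plan does not separate these cases.

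The more serious gap is in the direction $(1)\Rightarrow(2)$. From $\mathrm{Ext}^1_S(S/cS,S/bS)=0$ you correctly get that the canonical extension $0\to cS/cbS\to S/cbS\to S/cS\to 0$ splits, so $S/cbS=C\oplus cS/cbS$ with $C=I/cbS$ for some right ideal $I$ of $S$. You then assert $C=b'S/cbS$ ``for a factorisation $cb=b'c'$'', but $S$ is not a principal ideal ring: nothing in your argument shows $I$ is principal, let alone that a generator can be taken in $S$, of type $(\mathbf{B})$, with the \emph{exact} equality $cb=b'c'$ in $S$ and with the cofactor $c'$ of type $(\mathbf{C})$. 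You flag this as ``the main obstacle'' and point at the PID $T$, but that is precisely the work the paper's proof of Theorem \ref{target} $(c)\Rightarrow(e)$ carries out: analyse the length-two module over $T$, extract irreducibles $b'',c'\in S\setminus XS$ (generators of annihilator ideals, principal since $T$ is a PID, moved into $S$ by Lemma \ref{irreducible}) with $Tb''c'=Tcb$, descend to $S$ via $zT\cap S=zS$ for $z\notin XS$ and its extension to products of type $(\mathbf{B})/(\mathbf{C})$ irreducibles (Lemma \ref{intersect}, Remark \ref{extend}) to obtain $Sb''c'=Scb$, and finally check that the resulting unit lies in $1+XR$, so that $b':=ub''$ remains type $(\mathbf{B})$ and $cb=b'c'$ holds on the nose. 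Without this descent and unit normalisation you only obtain a factorisation up to units of $T$, which is weaker than statement (2). (Your converse direction is essentially sound: the canonical class is the image of $1$ under $\mathrm{Ext}^1_S(S/cS,S/bS)\cong S/(Sc+bS)$, so its vanishing does kill the whole group, and every non-split extension of $S/bS$ by $S/cS$ is uniform — but the correct justification is the paper's Lemma \ref{patch}, resting on Lemma \ref{split}, not your localisation remark.)
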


Naturally there is a parallel result for left modules.

\subsection{$(\diamond)$ for $ \widehat{S} $}\label{clinch} In $\S$\ref{outcome} we specialise from the above setup by taking $R$ to be $k[[X]]$, continuing to assume that $\alpha$ is an arbitrary $k$-algebra automorphism of infinite order. Write $ \widehat{S} := k[[X]][\theta; \alpha]$. We prove a type of converse Eisenstein criterion for $ \widehat{S}$ (Lemma \ref{notirr}), which provides a sufficient condition for an element of $ \widehat{S}$ to be reducible and which may be of independent interest. This permits us to confirm that $ \widehat{S}$ satisfies the condition stated in Theorem \ref{criterion}(2), thus yielding one of the main results of the paper:

\begin{theorem}\label{main}$ \widehat{S}$ satisfies $(\diamond)$.
\end{theorem}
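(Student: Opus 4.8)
The plan is to verify, for $\widehat{S}$, the monoid-commutativity condition of Theorem~\ref{criterion}(2); by that theorem and its left-handed counterpart this yields $(\diamond)$ on both sides, hence Theorem~\ref{main}. So I would fix irreducible elements $b$ and $c$ of $\widehat{S}$, of respective types \textbf{(B)} and \textbf{(C)}, and look for a factorization $g = b'c'$ of $g := cb$ with $b'$ of type \textbf{(B)} and $c'$ of type \textbf{(C)} --- a property of the single element $g$. A useful preliminary is that $g\notin X\widehat{S}$: reduction modulo $X$ identifies $\widehat{S}/X\widehat{S}$ with $k[\theta]$, and since neither $b$ nor $c$ is an associate of $X$ (neither is of type \textbf{(A)}) their images $\bar b,\bar c$ in $k[\theta]$ are nonzero, so $\bar g = \bar c\,\bar b\neq 0$.

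The target translates, via the dictionary of Theorem~\ref{crits}, into a statement about the cyclic module $M := \widehat{S}/g\widehat{S}$: a left factorization $g=b'c'$ with $b'$ irreducible is precisely the choice of a cyclic simple quotient $\widehat{S}/b'\widehat{S}$ of $M$, the cofactor being read off from $\widehat{S}/c'\widehat{S}\cong b'\widehat{S}/g\widehat{S}\hookrightarrow M$. Here $M$ is faithful --- it has the submodule $c\widehat{S}/g\widehat{S}\cong\widehat{S}/b\widehat{S}$, which is faithful because $b$ is of type \textbf{(B)} --- and it fits in a short exact sequence $0\to\widehat{S}/b\widehat{S}\to M\to\widehat{S}/c\widehat{S}\to0$ whose kernel is a type-\textbf{(B)} simple module and whose cokernel $\widehat{S}/c\widehat{S}$ is of type \textbf{(C)} and $1$-critical, so that $M$ has Krull dimension $1$. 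Once such a $b'$ is produced, $c'$ is automatically irreducible (both $b$ and $c$, and hence $b'$, stay irreducible in the localisation $T := \widehat{S}\langle X^{-1}\rangle$ because the associated simple and $1$-critical modules are $X$-torsion-free, so the relevant right ideals are $X$-saturated; comparing lengths in $T$ and using $g\notin X\widehat{S}$ then forces $c'$ irreducible in $\widehat{S}$), and a short case analysis --- $M$ is faithful, and its $1$-critical subquotient prevents $M$ from having finite length --- shows $c'$ is of neither type \textbf{(A)} nor type \textbf{(B)}, hence of type \textbf{(C)}. So the crux is the existence of a type-\textbf{(B)} simple quotient of $M$, equivalently of a factorization $cb = b'c'$ whose left factor is of type \textbf{(B)}.

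To produce it I would write $cb$ in normal form $\sum_i a_i(X)\theta^i$ over $k[[X]]$ and extract its $X$-adic data --- the reduction $\bar g = \bar c\,\bar b\in k[\theta]$ and the $X$-adic valuations of the $a_i$, which are governed by the $X$-adic Newton polygons of $b$ and $c$ --- and then invoke the converse Eisenstein criterion, Lemma~\ref{notirr}. Because $c$ has type \textbf{(C)} and $b$ has type \textbf{(B)}, one arranges that $cb$ meets the hypothesis of that lemma; it then furnishes a proper left factor of $cb$ with controlled reduction modulo $X$, and --- this is the point where it is essential that $R=k[[X]]$ is \emph{complete} --- a suitable factorization of $\bar c\,\bar b$ in $k[\theta]$ can be lifted by a Hensel-type argument to a factorization of $cb$ in $\widehat{S}$, chosen so as to put the type-\textbf{(C)} data into $c'$ and the type-\textbf{(B)} data into $b'$. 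Checking $b'$ against the definitions of \S\ref{taxonomy} --- comparing $\theta$-degrees, leading coefficients and reductions modulo $X$ --- then confirms it has type \textbf{(B)}, which completes the verification of Theorem~\ref{criterion}(2) and hence of Theorem~\ref{main}.

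The main obstacle is exactly this calibration: that $cb$ is reducible is cheap, but forcing it to factor with the two prescribed types \emph{and in the prescribed order} requires applying Lemma~\ref{notirr} carefully to the lowest-order ($X$-adic) behaviour of $cb$ and to $\bar c\,\bar b$, and it genuinely uses the completeness of $k[[X]]$ --- this is the feature that distinguishes $\widehat{S}$ from the general discrete valuation $k$-algebra $R$ of \S\ref{rep}, for which Theorem~\ref{criterion}(2) need not persist. One must also dispatch the degenerate possibilities --- a factor turning out to be an associate of $X$ or of $\theta$, or $cb$ lying in $k[\theta]$ --- but these are excluded by the observation $g\notin X\widehat{S}$, the faithfulness of $M$, and a direct reduction-modulo-$X$ argument.
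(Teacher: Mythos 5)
Your proposal follows essentially the same route as the paper: reduce to the monoid-commutativity criterion (Theorem \ref{criterion}(2), i.e.\ Theorem \ref{target}(e)), apply the converse Eisenstein Lemma \ref{notirr} to the normal form of $cb$, and then identify the types of the two factors by comparing $\theta$-degrees, reductions modulo $X$, and composition lengths in the PID $T$ -- exactly the content of Proposition \ref{yippee}. The one calibration you leave unexecuted is supplied in the paper by Corollary \ref{typeC} (itself an application of Lemma \ref{notirr}), which replaces $c$ by an associate that is monic in $\theta$ so that $cb$ visibly satisfies the hypotheses of Lemma \ref{notirr} at break index $\deg_\theta c$; no additional Hensel-type lifting is needed, since the completeness of $k[[X]]$ is already what drives the coefficient recursion inside the proof of Lemma \ref{notirr}.
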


\bigskip

\section{Background results and notation}\label{background}

\subsection{Primitivity of skew polynomial algebras}\label{primitive} We begin by recalling the following definition from \cite{JurekLeroy}:

\begin{definition}\label{special} Given a ring $R$ and $\alpha\in \mathrm{Aut}(R)$, $R$ is \emph{$\alpha$-special} if there is an element $a$ of $R$ such that the following conditions are satisfied.
\begin{enumerate}
 \item For all $n\geq 1$, $N_n^{\alpha}(a):=a\alpha(a)\ldots\alpha^{n-1}(a)\neq 0$.
 \item For every non-zero $\alpha$-stable ideal $I$ of $R$, there exists $n\geq 1$ such that $N_n^{\alpha}(a)\in I$.
\end{enumerate}
When this occurs, the element $a$ is called an \emph{$\alpha$-special element}.
\end{definition}

\noindent Here is the resulting characterisation of primitivity for skew polynomial rings:

\begin{theorem}{\cite[Theorem 3.10]{JurekLeroy}}\label{JurekLeroy}
Let $R$ be a commutative Noetherian ring and let $\alpha \in \mathrm{Aut}(R)$. Then $ R[\theta; \alpha]$ is primitive if and only if $R$ is $\alpha$-special and $\alpha$ has infinite order.
\end{theorem}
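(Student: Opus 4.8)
The plan is to work from the definition: $S:=R[\theta;\alpha]$ is (right) primitive exactly when it has a faithful simple right module, equivalently when it has a proper right ideal comaximal with every nonzero two-sided ideal; the whole argument is a translation between the two-sided ideals of $S$ and the $\alpha$-stable ideals of $R$. The direction forcing $|\alpha|=\infty$ is quick, and also gives the easy observation recalled in $\S$\ref{context} that $R[\theta;\alpha]$ is never primitive for $|\alpha|<\infty$: if $\alpha^m=\mathrm{id}$ then $\theta^m$ is central, so $S$ is free of finite rank over the commutative Noetherian subring $R[\theta^m]$ and hence satisfies a polynomial identity; a primitive PI ring is simple Artinian by Kaplansky's theorem, but $\theta$ is a regular non-unit of $S$ (it is regular because $\alpha$ is injective, and $S/\theta S\cong R\neq 0$), so $\theta S\supsetneq\theta^2S\supsetneq\cdots$ shows $S$ is not Artinian. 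Hence $|\alpha|=\infty$ may be assumed throughout.

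For the bridge between the two sides I would record that any nonzero two-sided ideal $I$ of $S$ has nonzero \emph{content} $c(I)\trianglelefteq R$ (the ideal generated by all coefficients of all elements of $I$), that $I\subseteq c(I)S$, and that $c(I)$ is $\alpha$-stable (from $\theta I\subseteq I$ and Noetherianity); localising at the normal regular element $\theta$ identifies the $\theta$-saturated two-sided ideals of $S$ with the two-sided ideals of the skew Laurent ring $R[\theta^{\pm1};\alpha]$, for which intersection with $R$ genuinely is $\alpha$-stable. Since a primitive ideal is prime, $S$ primitive forces $R$ to be $\alpha$-prime, hence (as $R$ is commutative Noetherian) reduced with $\alpha$ transitive on its finitely many minimal primes; $\alpha$-specialness forces the same, since a nonzero $\alpha$-stable nilradical, or a pair of nonzero $\alpha$-stable ideals with zero product, would contradict condition (1) via the cocycle identity $N^\alpha_{n+m}(a)=N^\alpha_n(a)\,\alpha^n(N^\alpha_m(a))$. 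A standard $\alpha$-prime reduction then lets us assume $R$ is a domain with fraction field $Q$ and $S\subseteq B:=Q[\theta;\alpha]$, a right and left principal ideal domain.

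For sufficiency, given an $\alpha$-special $a$ in the domain $R$ (so $a\neq0$, and (1) is now automatic), I would manufacture a faithful simple right $S$-module out of $a$: either a suitable $R$-submodule of $Q$ carrying the $\alpha^{-1}$-semilinear action $\theta\colon q\mapsto\alpha^{-1}(q)a^{-1}$ (a submodule of $B/(\theta-a^{-1})B\cong Q$), or, when $R$ has many points, an orbit module $\bigoplus_n R/\alpha^n(\mathfrak p)$ with $\theta$ shifting the summands. Condition (1) makes the governing $\theta$-action injective — the norms $N^\alpha_n(a)$, equivalently $N^{\alpha^{-1}}_n(a)=\alpha^{-(n-1)}(N^\alpha_n(a))$, never vanish — and condition (2) is exactly what yields faithfulness: a nonzero two-sided ideal inside the annihilator would, by the previous paragraph, have content a nonzero $\alpha$-stable ideal absorbing some $N^\alpha_n(a)$, which would then have to act as $0$, impossibly. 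For the $Q$-valued module this last point is transparent: the vanishing of $q\cdot s=\sum_i\alpha^{-i}(q)\bigl(\alpha^{-i}(r_i)N^{\alpha^{-1}}_i(a^{-1})\bigr)$ for all $q\in Q$ forces $s=\sum r_i\theta^i=0$, by Dedekind independence of the pairwise distinct automorphisms $\alpha^0,\dots,\alpha^{-\deg s}$ of $Q$. Simplicity of the module is where condition (2) really bites, via a correspondence between the $q\mapsto\alpha^{-1}(q)a^{-1}$-stable fractional $R$-ideals (or the $\theta$-stable submodules of an orbit module) and the $\alpha$-stable ideals of $R$. I expect this correspondence — made uniform over all commutative Noetherian $R$, reconciling the ``generic point'' and ``periodic orbit'' mechanisms — to be the main obstacle and to absorb the bulk of the work; it is the technical core of \cite{JurekLeroy}.

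For the remaining direction, necessity of $\alpha$-special, take a faithful simple right $S$-module $V$; then $|\alpha|=\infty$ by the first step, and we reduce to $R$ a domain. Right multiplication by $\theta$ is injective on $V$ (its kernel is a submodule, hence $0$ since $V$ is faithful and $\theta\neq0$), and the $R$-torsion submodule of $V$ is a submodule, hence $0$ or $V$: in the torsionfree case, localising at $R\setminus\{0\}$ makes $V$ generate a simple $B$-module $\cong Q$ with no proper nonzero $\alpha^{-1}(\cdot)a^{-1}$-stable fractional $R$-ideal, while in the torsion case $\mathrm{Ass}_R(V)$ is a union of $\alpha$-orbits of primes and $V$ is assembled from the corresponding orbit modules. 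Reading off the $\theta$-action on a generator then produces $a\in R$ for which (1) records the injectivity of right multiplication by $\theta^n$ and (2) records that a nonzero $\alpha$-stable ideal $J$ missing every $N^\alpha_n(a)$ would give $JS\subseteq\mathrm{ann}_S(V)=0$, which is absurd. Assembling the pieces gives the stated equivalence.
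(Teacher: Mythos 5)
Note first that the paper does not prove this statement at all: it is quoted verbatim from \cite[Theorem 3.10]{JurekLeroy} as background, so your attempt has to be judged on its own merits rather than against an internal argument. Your first step is fine: if $\alpha^m=\mathrm{id}$ then $\theta^m$ is central, $S$ is a finite module over the commutative Noetherian ring $R[\theta^m]$, hence PI, and Kaplansky's theorem plus the regular non-unit $\theta$ rules out primitivity; this is exactly the ``easy and well known'' observation recalled in $\S$\ref{context}. The problem is that both remaining implications — the actual content of the theorem — are left unproven. For sufficiency you describe candidate modules (an $R$-submodule of $Q$ with $\theta$ acting by $q\mapsto\alpha^{-1}(q)a^{-1}$, or an orbit module) and then explicitly defer the verification of simplicity, writing that the needed correspondence is ``the main obstacle'' and ``the technical core of \cite{JurekLeroy}''; that is precisely the point at which condition (2) of Definition \ref{special} must be converted into a faithful \emph{simple} module, and nothing in your sketch does it. For necessity, ``reading off the $\theta$-action on a generator then produces $a\in R$'' is not an argument: one must exhibit a single element $a$ whose norms $N^{\alpha}_n(a)$ land in \emph{every} nonzero $\alpha$-stable ideal of $R$, which is delicate in the $R$-torsion case (where $\mathrm{Ass}_R(V)$ can involve infinitely many orbits) and is not supplied by faithfulness alone. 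The ``standard $\alpha$-prime reduction'' to a domain is also asserted without proof, and both primitivity and $\alpha$-specialness need to be shown to transfer across it.

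A concrete way to close the main gap on the sufficiency side, avoiding any analysis of submodule lattices, is the comaximality device: for an $\alpha$-special element $a$ consider the right ideal $\rho:=(1-a\theta)S$, note that $(a\theta)^n=N^{\alpha}_n(a)\theta^n$ gives $N^{\alpha}_n(a)\theta^n\equiv 1 \pmod{\rho}$ for all $n$, and show that every nonzero two-sided ideal $J$ of $S$ contains an element with leading (or trailing) coefficient $N^{\alpha}_n(a)$ for some $n$, because the relevant coefficient ideal of $J$ is a nonzero $\alpha$-stable ideal of $R$; a short computation then yields $J+\rho=S$, so any maximal right ideal containing $\rho$ gives a faithful simple module. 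This is also consonant with how the present paper uses the result: for $R=k[[X]]$ with $\alpha$-special element $X$, the faithful simple module that appears is exactly of the form $S/(1-X\theta)S$ (compare $V=T/(1-X\theta)T$ and the type $(\mathbf{B})$ elements $1+Xs$ of $\S$\ref{taxonomy}). Until the simplicity/faithfulness construction and the extraction of $a$ in the necessity direction are actually carried out, the proposal is an outline of the theorem rather than a proof of it.
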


\noindent From the definition it follows easily that an $\alpha$-special ring is $\alpha$-prime. Clearly, an $\alpha$-simple ring is $\alpha$-special, with $1$ as $\alpha$-special element in this case.
\noindent Consider, however, the algebra $R$     defined in $\S$\ref{rep}. Clearly  $<X>$ is a proper $\alpha$-ideal
of $R$, so that $R$ is not $\alpha$-simple; but $X$ is an $\alpha$-special element of $R$, so $R$ is $\alpha$-special. Since $\alpha$ is by hypothesis of infinite order, it therefore follows from Theorem \ref{JurekLeroy} that
$$    S  = R[\theta; \alpha]  \textit{  is primitive.} $$

\noindent In fact one can easily confirm by explicit construction that $S$ is primitive, and - even better - the simple $S$-modules can be completely described using work of Bavula and Van Oystaeyen \cite{Bavula}, as we shall explain in $\S\S$\ref{list} and \ref{1-crit}.

\subsection{Property $(\diamond)$}\label{diamond}  Let $R$ be a commutative Noetherian ring and $\alpha$ an automorphism of $R$. In \cite{BCM} the
following question was addressed:
\begin{equation}\label{which} \textit{For which } R \textit{  and } \alpha \textit{ does }   R[\theta ; \alpha ] \textit{ satisfy } (\diamond)?
\end{equation}

It is not difficult to show that the key to this question is to answer it
when $  R[\theta;\alpha]$ is primitive and $R$ is a domain; see \cite[Corollary 3.5 and $\S$4.2]{BCM}.  Bringing Theorem \ref{JurekLeroy} to bear on this situation, we found the following:

\begin{proposition} \cite[Proposition 5.3]{BCM}\label{primprop} Let R be a commutative Noetherian domain and $\alpha$ an automorphism of $R$. Suppose
that $  R[\theta; \alpha]$ is primitive, with $\alpha$-special element $a$.  Let $\cal{A}$ denote the multiplicative subsemigroup of $R\backslash\{0\}$ generated by $\{\alpha^i(a):i\in \Z\}$, so that $\cal{A}$ is an
$\alpha-$invariant multiplicatively closed subset of $R\backslash\{0\}$ and hence satisfies the Ore condition in $R[\theta; \alpha]$. Suppose that
$R{\cal A}^{-1}$ is not a field. Then $R[\theta; \alpha]$ does not satisfy $(\diamond)$.
\end{proposition}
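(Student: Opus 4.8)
The plan is to exhibit an explicit simple $S$-module, where $S = R[\theta;\alpha]$, whose injective hull contains a finitely generated submodule that is not Artinian, thereby violating $(\diamond)$. First I would pass to the Ore localisation $S' := S\mathcal{A}^{-1} = (R\mathcal{A}^{-1})[\theta;\alpha]$; since $\mathcal{A}$ is $\alpha$-invariant and multiplicatively closed in the domain $R$, the displayed Ore condition gives that $S'$ is a localisation of $S$, and it suffices (by the standard fact that localisation of an injective module at an Ore set is again injective, and that finitely generated non-Artinian submodules survive) to show that $S'$ fails $(\diamond)$. Write $R' := R\mathcal{A}^{-1}$; by hypothesis $R'$ is not a field, so $R'$ has Krull dimension at least $1$, and hence $S' = R'[\theta;\alpha]$ has Krull dimension at least $2$ by \cite[Proposition 6.5.4]{McCR}. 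Moreover, by construction every $\alpha^i(a)$ is a unit in $R'$, and the $\alpha$-special property of $a$ then forces $R'$ to be $\alpha$-simple: any nonzero $\alpha$-stable ideal of $R'$ contracts to a nonzero $\alpha$-stable ideal $I$ of $R$ after clearing denominators, and some $N_n^\alpha(a) \in I$ becomes a unit in $R'$. Consequently $S'$ is a simple ring? — no, $\theta$ is not inverted, but $S'$ is $\alpha$-simple over $R'$, hence $S'$ is a domain with no nonzero proper $\theta$-stable ideals coming from $R'$, and in particular $S'$ is primitive with the same structure as before.

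The technical heart is then a direct computation with a well-chosen simple module. I would take a maximal ideal $\germ{m}$ of $R'$ (which exists since $R'$ is not a field and is Noetherian), and consider the cyclic $S'$-module $V := S'/\germ{m}S' $ — or rather, following the description of simple modules via Bavula–Van Oystaeyen alluded to in the paper, a simple $S'$-module $V$ on which $R'$ acts through the chain of ideals $\{\alpha^i(\germ{m})\}$. The key point is that because $\alpha$ has infinite order and $R'$ is not a field, the orbit $\{\alpha^i(\germ{m}) : i \in \Z\}$ is infinite, and one builds inside $E_{S'}(V)$ an ascending chain of finitely generated submodules whose $R'$-supports strictly increase, using the derivation-like action of $\theta$ to move between the stalks at successive points of the orbit. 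Concretely, I expect the relevant non-Artinian submodule to be constructed as an extension built from the non-split $\mathrm{Ext}^1$ classes between the critical modules attached to the points $\alpha^i(\germ{m})$ — this is exactly the "analysis of non-split extensions of critical modules" advertised in $\S$\ref{monoid}, specialised here to the localised ring where the taxonomy collapses.

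The main obstacle — and the step I would spend the most care on — is verifying that the submodule of $E_{S'}(V)$ one constructs really is finitely generated while failing to be Artinian: finite generation is delicate because injective hulls are large, so one must produce the submodule by an explicit inductive construction (a union of an increasing chain $V = V_0 \subset V_1 \subset \cdots$ of finite-length modules, each $V_{n+1}/V_n$ simple, that stabilises to a finitely generated — indeed cyclic — module $U$) and then check that $U$ has a strictly descending chain of submodules, which follows from the infinitude of the $\alpha$-orbit of $\germ{m}$. Here the hypothesis that $R'$ is \emph{not} a field is used twice: once to guarantee $\germ{m} \neq 0$ so that the orbit is genuinely infinite as a set of distinct primes, and once to ensure $\mathrm{Kdim}(S') \geq 2$ so that $1$-critical modules exist to serve as the composition factors. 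A subsidiary point to check is that the construction is insensitive to the choice between right and left modules, and that everything done over $S'$ pulls back to $S$; both are routine given the Ore-localisation formalism of \cite{BCM}.
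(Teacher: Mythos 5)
This proposition is quoted from \cite[Proposition 5.3]{BCM}; the present paper contains no proof of it, so your plan has to be judged as a would-be complete argument, and as such it has two genuine gaps. First, the opening reduction is asserted in the wrong direction. What you need is ``$S':=S\mathcal{A}^{-1}$ fails $(\diamond)$ $\Rightarrow$ $S$ fails $(\diamond)$'', and this is not covered by the slogan that localisations of injectives are injective: a simple $S'$-module need not have simple socle on restriction to $S$, a finitely generated $S'$-submodule of an injective hull is not finitely generated over $S$, and ``Artinian'' does not pass back and forth for free. To make the transfer work one has to arrange the witness to come from a faithful simple $S$-module which is $\mathcal{A}$-torsion free, so that (as in Proposition \ref{samehull} of this paper) its injective hulls over $S$ and over $S'$ coincide, and then compare finitely generated submodules over the two rings; none of this is set up in your plan. (Your verification that $R\mathcal{A}^{-1}$ is $\alpha$-simple, and that the $\alpha$-orbit of a maximal ideal is infinite, is fine, and the remark that $\theta$ acts ``derivation-like'' is a misnomer -- the extension is of automorphism type.)

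Second, and more seriously, the heart of the proposition -- producing a \emph{single finitely generated non-Artinian} submodule of $E(V)$ -- is exactly the part you leave as ``one builds\ldots'' and ``I expect\ldots''. Moreover the shape you propose for it is incoherent: a union of a strictly increasing chain $V_0\subset V_1\subset\cdots$ of finite-length modules with simple successive quotients cannot ``stabilise to'' a finitely generated module unless the chain terminates, in which case the union has finite length and is Artinian -- precisely not a violation of $(\diamond)$; an infinite such chain by itself is compatible with $(\diamond)$ (it only shows $E(V)$ is not Artinian, not that some finitely generated submodule fails to be). What is actually required is a cyclic essential extension $U$ of the simple module whose quotient $U/V$ has Krull dimension at least $1$, i.e.\ a proof that $\mathrm{Ext}^1$ of a suitable $1$-critical module by $V$ is nonzero (compare Theorem \ref{gather} and Lemma \ref{patch} in the special situation of this paper), and it is exactly in producing such a non-split extension that the hypotheses -- $\alpha$-simplicity of $R\mathcal{A}^{-1}$ together with it not being a field -- must be brought to bear, for instance through an explicit element-level construction in $R\mathcal{A}^{-1}[\theta;\alpha]$. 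Nothing in your sketch produces that nonvanishing Ext class, so the proposal does not yet constitute a proof.
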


\noindent Notice that Proposition \ref{primprop} fails to determine whether the primitive domains $ S=R[\theta; \alpha]$ of $\S$\ref{rep} satisfy $(\diamond)$. More precisely, we were able to obtain the following necessary and sufficient conditions in \cite{BCM}; the result was stated as Theorem \ref{old}, but is repeated here for the reader's convenience. 

\begin{theorem}\cite[Theorem 5.4]{BCM}\label{primitivecase} Let $R$ be a commutative Noetherian ring, $\alpha$ an automorphism of $R$. Suppose that  $  R[\theta;\alpha]$ is  primitive.
\begin{enumerate}
\item If $R$ has Krull dimension 0 then $ R[\theta;\alpha]$ satisfies $(\diamond)$.
\item Suppose that $R$ contains an uncountable field. Suppose also that either $R$ has Krull dimension at least 2, or $\mathrm{Spec}(R)$ is uncountable. Then $ R[\theta;\alpha]$ does not satisfy $(\diamond)$.
\end{enumerate}
\end{theorem}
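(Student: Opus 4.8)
Write $S:=R[\theta;\alpha]$; by Theorem \ref{JurekLeroy}, primitivity forces $R$ to be $\alpha$-special, hence $\alpha$-prime, with $\alpha$ of infinite order. For \emph{(1)} the plan is to reduce to a simple principal ideal domain. First, an $\alpha$-prime Noetherian commutative ring $R$ is reduced: its nilradical $\mathfrak{n}$ is a nilpotent $\alpha$-stable ideal, and if $\mathfrak{n}\neq 0$, choosing an $\alpha$-special element $a$ and (by Definition \ref{special}(2)) an $n$ with $N^{\alpha}_{n}(a)\in\mathfrak{n}$, one gets $N^{\alpha}_{nN}(a)=N^{\alpha}_{n}(a)\,\alpha^{n}(N^{\alpha}_{n}(a))\cdots\alpha^{(N-1)n}(N^{\alpha}_{n}(a))\in\mathfrak{n}^{N}=0$, using that $\mathfrak{n}$ is $\alpha$-stable and $\mathfrak{n}^{N}=0$, contradicting Definition \ref{special}(1). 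So when $\mathrm{Kdim}(R)=0$, $R$ is reduced Artinian, i.e. a finite product of fields, which $\alpha$-primeness forces $\alpha$ to permute transitively; thus $R\cong F^{m}$, $\alpha^{m}$ restricts to an automorphism $\beta$ of the factor $F$, and a direct computation identifies $S$ with $M_{m}(D)$, where $D:=F[\phi;\beta]$ and $\phi:=\theta^{m}$. Since $\alpha$ has infinite order so does $\beta$, whence $D$ is a simple left and right principal ideal domain. As $(\diamond)$ is a Morita invariant (Morita equivalence preserves simple modules, injective hulls, finite generation and finite length) it suffices to verify $(\diamond)$ for $D$. Let $V$ be a simple right $D$-module and $E:=E_{D}(V)$; then $E$ is uniform, hence indecomposable, and its torsion submodule $t(E)$ (defined since the Noetherian domain $D$ is Ore) contains $V$, so is nonzero. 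As $D$ is a principal ideal domain it is right hereditary, so $t(E)$ is an injective submodule of $E$, hence a direct summand, hence all of $E$. Thus $E$ is torsion, so every finitely generated submodule $W$ of $E$ is a finitely generated torsion $D$-module; such a $W$ is killed by a nonzero element of $D$, so is a finitely generated module over an Artinian quotient of $D$ and has finite length. Hence $E$ is locally Artinian, and the same argument on the left proves \emph{(1)}.

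For \emph{(2)} the plan is to put ourselves in a position to apply Proposition \ref{primprop} by showing $R\mathcal{A}^{-1}$ is not a field. By the reduction of the $(\diamond)$-problem to the domain case in \cite[Corollary~3.5 and \S4.2]{BCM} we may assume $R$ is a Noetherian domain still satisfying the hypotheses; write $F$ for the given uncountable subfield. If $R$ is $\alpha$-simple we may take the $\alpha$-special element $a=1$, so $R\mathcal{A}^{-1}=R$, which is not a field since $\mathrm{Kdim}(R)\geq 1$; so assume $R$ is not $\alpha$-simple. Then $a$ is not a unit — a unit $a$ makes each $N^{\alpha}_{n}(a)$ a unit, which by Definition \ref{special}(2) would force every nonzero $\alpha$-stable ideal to equal $R$ — so each $\alpha^{i}(a)$ is a nonzero nonunit, lying in at least one but only finitely many height-one primes of $R$. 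Next I claim $R$ has uncountably many height-one primes. If $\mathrm{Kdim}(R)\leq 1$ this is clear, since then $\mathrm{Spec}(R)\setminus\{(0)\}$ consists of height-one primes and is uncountable by hypothesis. If $\mathrm{Kdim}(R)\geq 2$, localise at a height-two prime to reach a $2$-dimensional Noetherian local domain $(R',\mathfrak{m})$ containing $F$; for a system of parameters $x,y$ of $R'$, the elements $x+cy$ with $c\in F^{\times}$ are nonzero nonunits, and for $c\neq c'$ any height-one prime containing both $x+cy$ and $x+c'y$ would contain $(c-c')y$, hence $y$ and then $x$, hence be $\mathfrak{m}$-primary and so of height $2$ — thus the finite nonempty sets of height-one primes over the various $x+cy$ are pairwise disjoint, giving uncountably many height-one primes of $R'$, hence of $R$. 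Since $\{\alpha^{i}(a):i\in\mathbb{Z}\}$ is countable and lies in only countably many height-one primes, some height-one prime $\mathfrak{p}$ contains no $\alpha^{i}(a)$, hence meets $\mathcal{A}$ trivially; then $(0)\subsetneq\mathfrak{p}$ shows $\mathrm{Kdim}(R\mathcal{A}^{-1})\geq 1$, so $R\mathcal{A}^{-1}$ is not a field and Proposition \ref{primprop} gives that $S$ does not satisfy $(\diamond)$.

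The routine part is \emph{(1)}: once $S\cong M_{m}(D)$ is established, everything is formal heredity/length bookkeeping over a simple principal ideal domain. The main obstacle lies in \emph{(2)}, in two places: first the reduction to a domain, needed before Proposition \ref{primprop} applies; and second the production of uncountably many height-one primes when $\mathrm{Kdim}(R)\geq 2$, where the hypothesis that $R$ contains an uncountable field enters in an apparently essential way via the pencil $x+cy$. The failure of this device over a countable field — or when $\mathrm{Kdim}(R)=1$ with $\mathrm{Spec}(R)$ countable — is precisely the gap the rest of the paper is devoted to closing.
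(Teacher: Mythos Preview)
The present paper does not prove Theorem~\ref{primitivecase}: it is simply quoted from \cite[Theorem~5.4]{BCM}, with no argument given here. There is therefore no proof in this paper against which to compare your attempt; what follows is a brief assessment of your reconstruction on its own terms.

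Your argument for part~(a) is essentially sound, but one claim is false: $D=F[\phi;\beta]$ is \emph{not} a simple ring, since $\phi D=D\phi$ is a proper nonzero two-sided ideal. Fortunately you never actually use simplicity. What your argument needs is only that $D$ is a hereditary Noetherian Ore domain in which every finitely generated torsion module has finite length, and this is true: for $0\neq d\in D$ the module $D/dD$ has $F$-dimension $\deg_{\phi}d<\infty$, and any finitely generated torsion module is a finite sum of such cyclics. (Equivalently, $D$ is a prime Noetherian ring of Krull dimension~$1$, so $(\diamond)$ holds by the result you see invoked later in the paper for $T$, namely \cite[Proposition~5.5]{Musson80}.) The Morita transfer and the identification $S\cong M_{m}(D)$ are fine.

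For part~(b), the pencil argument producing uncountably many height-one primes in a two-dimensional Noetherian local domain containing an uncountable field is correct and is the natural way to exploit the uncountable-field hypothesis; combined with Proposition~\ref{primprop} it does the job once $R$ is a domain. The step that deserves scrutiny is your opening sentence, ``we may assume $R$ is a Noetherian domain still satisfying the hypotheses''. The reduction of \cite[Corollary~3.5 and \S4.2]{BCM} tells you that $(\diamond)$ for $R[\theta;\alpha]$ is governed by $(\diamond)$ for certain primitive skew polynomial rings over domains arising from $R$; to use it here you must check that at least one of those domains still contains the uncountable field (clear) and still has Krull dimension $\geq 2$ or uncountable spectrum (not automatic). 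That verification is genuine content of the proof in \cite{BCM}, and you should not elide it.
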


\noindent Thus the rings $S$ of $\S$\ref{rep} lie in the gap between the two listed families (a) and (b), and so are fundamental to understanding which skew polynomial extensions of commutative Noetherian rings $R$ satisfy $(\diamond)$. 


\subsection{Notation}\label{notation} For the reader's convenience we repeat here some notation listed earlier. Let $k$ be an arbitrary field. 
 Henceforth  $R$ stands for  a discrete valuation ring which is a $k$-algebra with maximal ideal $M = XR$, with residue field $R/M = k$. Let $\alpha$ be a $k$-algebra automorphism of $R$, so $\alpha (X) = qX$ for some unit $q$ of $R$. Set $S = R[\theta; \alpha]$, and assume that $|\alpha| = \infty$; that is, there does not exist $n \in \mathbb{N}$ such that
\begin{equation}\label{norm} N^{\alpha}_n(q) \; := \; q \alpha (q) \cdots \alpha^{n-1}(q) = 1.
\end{equation}
We write
$$     \mathcal{X} \quad := \quad \{X^n : n \geq 0 \},$$
so $\mathcal{X}$ is an Ore subset of $S$, and we denote by $T$ the localisation of $S$ at $\mathcal{X}$; that is,
$$T \; := \; S\mathcal{X}^{-1} \; = S\langle X^{-1} \rangle =Q[\theta;\alpha],$$
where $Q=Q(R)=R\langle X^{-1} \rangle$ is the quotient field of $R$.
Taking into account that $X$ is a normal element of $S$, elements of $T$ will either be written as $X^{-n}s$ or $s_0X^{-n} $ for some $n\in \N$ and $s, s_0\in S$.
Throughout the paper, ``module'' will mean ``right module'' unless otherwise indicated.

\bigskip

\section{Injective hulls of uniform $ S$-modules}\label{injective}

The class of simple $ S-$modules, and - more generally - the class of uniform $ S-$modules, splits into two families, as outlined in the following two subsections.

\medskip

\subsection{$\mathcal{X}-$torsion $ S-$modules}\label{torsion}

Recall that $ \mathcal{X}$ denotes the Ore set $\{X^n : n \geq 0 \}$ in $ S$.

\begin{lemma}\label{torsion1} Let $E$ be an indecomposable injective $ S-$module, and suppose that the $\mathcal{X}-$torsion submodule of
$E$ is non-zero. Then $E$ is
$\mathcal{X}-$torsion and one of the following cases pertains:
\begin{enumerate}
\item[(1)] $E = E_{ S}( S/X S)$, and this module is an infinite tower of copies of $k(\theta)$.
\item[(2)] $E= E_{ S}( S/(X S + p(\theta) S))$ for some irreducible polynomial $p(\theta) \in k[\theta]$, and $E$ is locally finite dimensional.
\end{enumerate}
\end{lemma}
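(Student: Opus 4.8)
The plan is to read off the whole statement from the structure of
$$E[X]\;:=\;\{\,e\in E : eX=0\,\}\;=\;\mathrm{Hom}_S(S/XS,E).$$
First I would note that, $X$ being a normal element, $XS$ is a two–sided ideal and $S/XS\cong (R/XR)[\theta;\bar\alpha]=k[\theta]$, since $\bar\alpha$ is the identity on $R/M=k$; thus $S/XS$ is a commutative principal ideal domain. The change–of–rings principle ``$\mathrm{Hom}_S(S/I,E)$ is an injective $S/I$–module when $E$ is an injective $S$–module and $I$ is two–sided'' shows $E[X]$ is an injective $k[\theta]$–module, and it is uniform over $k[\theta]$ because any nonzero $k[\theta]$–submodule of $E[X]$ is an $S$–submodule of the uniform module $E$. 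The hypothesis forces $E[X]\neq 0$ (if $eX^n=0$ with $n\geq 1$ minimal then $eX^{n-1}\in E[X]\setminus\{0\}$). So $E[X]$ is a nonzero uniform injective module over the Dedekind domain $k[\theta]$, hence by Matlis' classification of indecomposable injectives either $E[X]\cong k(\theta)$ or $E[X]\cong E_{k[\theta]}(k[\theta]/(p))$ for a monic irreducible $p\in k[\theta]$.

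Next I would turn this dichotomy into the two cases. In the first case $E[X]\supseteq k[\theta]=S/XS$, so $S/XS$ embeds in $E$; as $E$ is the injective hull of each of its nonzero submodules, $E=E_S(S/XS)$. In the second case $E[X]$ contains the $k[\theta]$–submodule $k[\theta]/(p)$, which is $S/(XS+pS)$ (one checks the image of $pS$ in $S/XS=k[\theta]$ is $(p)$); this is a simple $S$–module since $S/(XS+pS)$ is the field $k[\theta]/(p)$, so $E=E_S(S/(XS+pS))$. The two cases are disjoint, being distinguished by whether the invariant $E[X]$ is torsion–free or torsion over $k[\theta]$.

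Then I would show $E$ is $\mathcal{X}$–torsion. The key small fact is that, $X$ being a regular element of the domain $S$, right multiplication $\rho_X\colon E\to E$, $e\mapsto eX$, is \emph{surjective} with kernel $E[X]$; this follows by applying $\mathrm{Hom}_S(-,E)$ to $0\to XS\to S\to S/XS\to 0$, using the right–module isomorphism $XS\cong S$ and injectivity of $E$. Iterating, $\rho_{X^n}$ is onto with kernel $E[X^n]$, and $E/E[X^n]\cong E$ up to a twist by the automorphism of $S$ given by conjugation by $X^n$ (which fixes $\mathcal{X}$ elementwise and descends to an automorphism of $k[\theta]$). Hence $t_{\mathcal{X}}(E)=\bigcup_n E[X^n]$ carries an ascending filtration whose successive quotients $E[X^{n+1}]/E[X^n]=(E/E[X^n])[X]$ are, applying the first paragraph to the indecomposable injective $E/E[X^n]$, all of the same type as $E[X]$: all $\cong k(\theta)$ in case one, all Prüfer modules $E_{k[\theta]}(k[\theta]/(p_n))$ in case two. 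That $t_{\mathcal{X}}(E)$ is actually all of $E$ is the statement that the $\mathcal{X}$–torsion radical carries injectives to injectives — equivalently, that the torsion theory of the Ore set $\mathcal{X}$ is stable — which is standard over a Noetherian ring (and can also be checked directly via Baer's criterion, exploiting that $\mathcal{X}=\{X^n\}_{n\geq 0}$ is generated by the single normal element $X$); given this, $E=E_S(t_{\mathcal{X}}(E))$ since $t_{\mathcal{X}}(E)$ is essential in the uniform module $E$, and being injective it equals $E$. I expect the injectivity of $t_{\mathcal{X}}(E)$ to be the point needing the most care.

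Finally, with $E=t_{\mathcal{X}}(E)$ and the filtration available: in case one this presents $E=E_S(S/XS)$ as an infinite tower of copies of $k(\theta)$, giving (1). In case two, each $E[X^n]$ is a module over $S/X^nS$, which is finitely generated (by $1,X,\dots,X^{n-1}$) as a module over the Noetherian ring $k[\theta]$; so a finitely generated $S$–submodule of $E[X^n]$ is finitely generated over $k[\theta]$, and being finitely filtered by finitely generated $k[\theta]$–submodules of the locally finite modules $E_{k[\theta]}(k[\theta]/(p_j))$ it is finite–dimensional over $k$. Thus each $E[X^n]$, and hence $E=\bigcup_n E[X^n]$, is locally finite–dimensional, giving (2).
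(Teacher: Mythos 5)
Your proposal is correct in outline and, at its core, follows the same strategy as the paper: both arguments filter $E$ by the annihilator layers $E[X^n]=\mathrm{Ann}_E(X^nS)$, identify those layers using the theory of injective modules over $S/XS\cong k[\theta]$ (the torsion-free case giving $k(\theta)$, the torsion case giving locally finite-dimensional hulls), and then need one global input to conclude that $E$ is the union of these layers. Your presentational variations are fine and even clarifying: computing $E[X]=\mathrm{Hom}_S(S/XS,E)$ by change of rings and Matlis' classification, and identifying $E[X^{n+1}]/E[X^n]$ with the $X$-socle of $E/E[X^n]\cong E$ twisted by conjugation by $X^n$ (legitimate, since right multiplication by the regular element $X$ is surjective on the injective $E$ and conjugation by $X^n$ fixes $X$ and restricts to an automorphism of $S$), whereas the paper runs an induction using the multiplication-by-$X$ maps between the layers.

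The one step to tighten is exactly the one you flag. The assertion that the $\mathcal{X}$-torsion radical of an injective module is injective because ``the torsion theory of an Ore set is stable over a Noetherian ring'' is not a valid general principle: stability of Ore-set torsion theories can fail over noncommutative Noetherian rings, so it cannot be quoted as standard. What saves the step is precisely your parenthetical observation that $\mathcal{X}$ consists of powers of the \emph{normal} element $X$: then $XS$ has the Artin--Rees property \cite[Proposition 4.2.6]{McCR}, and the standard AR consequence (a finitely generated module with an essential submodule annihilated by $XS$ is annihilated by a power of $XS$) applied to $eS$ and its essential submodule $eS\cap E[X]$ (essential because $E$ is uniform and $E[X]\neq 0$) shows directly that every $e\in E$ is killed by a power of $X$, i.e.\ $E=\bigcup_n E[X^n]$. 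This is exactly the paper's route, and it also replaces your sketched Baer-criterion check, which in any case needs the AR property to control $I\cap X^mS$. With that substitution the rest of your argument (the infinite tower of copies of $k(\theta)$ in case (1), and local finite-dimensionality in case (2) via finite generation of $S/X^nS$ over $k[\theta]$) goes through.
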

\begin{proof} By hypothesis there exists a non-zero element $v$ of $E$ with $vX = 0$. Thus $v S$ is a factor of $ S/X S \cong k[\theta]$. Suppose first that $v S \cong k[\theta]$. Then
\begin{equation}\label{extra} E_{ S/X S}(v S) \cong  E_{k[\theta]}(k[\theta]) \cong k(\theta). 
\end{equation}
Now $X$ is a normal element of $ S$ and therefore the ideal $X S$ has the Artin-Rees property \cite[Proposition 4.2.6]{McCR}. This implies that 
$$ E_S(vS) \; = \; \bigcup_{n \geq 1}\mathrm{Ann}_{E_S(vS)}(X^n S). $$
One can now show by induction that $\mathrm{Ann}_{E_S(vS)}(X^n S)$ is a tower of $n$ copies of $k(\theta)$, the case $n = 1$ being given by (\ref{extra}). The induction step follows using the map from $\mathrm{Ann}_{E_S(vS)}(X^{n+1} S)$ to $\mathrm{Ann}_{E_S(vS)}(X^n S)$ given by multiplication by $X$, which preserves the submodule structure and whose kernel is $\mathrm{Ann}_{E_S(vS)}(X S) = k(\theta)$.

Suppose on the other hand $v S$ is a proper factor of $k[\theta]$. Then $vS$ is finite dimensional and hence contains a simple submodule of the form $ S/( X S+ p(\theta) S)$ for some irreducible polynomial $p(\theta)$. Thus $E= E_{ S}( S/(X S + p(\theta) S))$, as stated in (2). Now $E_{ S/X S}( S/(X S + p(\theta) S))$ is Artinian by Matlis's theorem \cite{Matlis}, and the final part of (2) follows from this and the fact that $X S$ has the Artin-Rees property \cite[Proposition 4.2.6]{McCR}, arguing in a similar way to the previous paragraph.
\end{proof}

\medskip

\subsection{$\mathcal{X}-$torsion free uniform $ S-$modules}\label{torsionfree}

The following lemma shows that Lemma \ref{torsion1}(2) encompasses all the unfaithful simple $ S-$modules.



\begin{lemma}\label{both}
Let $V$ be a simple $ S-$module. Then:
\begin{enumerate}
  \item[(1)]  $V$ is faithful if and only if $V$ is  $\mathcal{X}-$torsion free.
  \item[(2)]  If  $V$ is $\mathcal{X}-$torsion free, then $V$  admits a structure as a (necessarily simple) $T-$module.
\end{enumerate}
\end{lemma}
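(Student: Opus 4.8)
\textbf{Proof proposal for Lemma \ref{both}.}

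The plan is to prove the two statements together, exploiting the fact that $X$ is a normal element of $S$ with $\theta X = q^{-1} X \theta$ (equivalently $X\theta = \alpha^{-1}(X)^{-1}\cdot(\cdots)$, but concretely $\alpha(X) = qX$), so that $XS = SX$ is an ideal. First I would prove the ``only if'' direction of (1), which is the easy half: if $V$ is $\mathcal{X}$-torsion, then by simplicity $VX = 0$, so $X \in \ann_S(V)$ and $V$ is not faithful. For the ``if'' direction, suppose $V$ is not faithful, so that $\ann_S(V)$ is a nonzero ideal of $S$; I would argue that a nonzero ideal of $S$ must meet $\mathcal{X}$ nontrivially up to the normal element $X$ — more precisely, I would show $\ann_S(V) \cap R \neq 0$. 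Indeed, pick a nonzero element $f = \sum_{i=0}^n r_i \theta^i \in \ann_S(V)$ of minimal $\theta$-degree $n$; since $\ann_S(V)$ is a two-sided ideal and $R$ is a PID (a DVR), one can clear denominators/reduce using $X$-multiplication and the commutation rules to push the leading coefficient into $R$, and then a standard degree-reduction argument (multiply $f$ on the left by $r \in R$ and subtract a suitable right multiple $f\cdot r'$, using that $\theta^i r = \alpha^i(r)\theta^i$) shows that if $n \geq 1$ the ideal contains a nonzero element of smaller degree unless already $n=0$. Hence $\ann_S(V)$ contains a nonzero $r \in R$; since $R$ is a DVR, $r = uX^m$ for a unit $u$ and some $m \geq 1$, so $X^m \in \ann_S(V)$ and $V$ is $\mathcal{X}$-torsion. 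The main obstacle is making this degree-reduction argument clean: one must check that the relevant leading-coefficient computations actually terminate and that normality of $X$ (rather than centrality) suffices; this is where the hypothesis that $R$ is a DVR with $\alpha(X) = qX$, $q$ a unit, does the work, since it guarantees $XS = SX$ and that $\ann_S(V)$, being $\alpha$-stable as a two-sided ideal, contains a power of $X$ once it contains any nonzero element of $R$.

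For part (2), assume $V$ is $\mathcal{X}$-torsion free. Since $\mathcal{X} = \{X^n : n \geq 0\}$ is an Ore set in $S$ and $V$ has no $\mathcal{X}$-torsion, the localisation map $V \to V\mathcal{X}^{-1} = V \otimes_S T$ is injective, and $V\mathcal{X}^{-1}$ is a $T$-module. It remains to see that $V$ itself already carries a $T$-module structure, i.e.\ that $V \to V\mathcal{X}^{-1}$ is an isomorphism, equivalently that $X$ acts invertibly on $V$. But $VX$ is a nonzero $S$-submodule of the simple module $V$ (nonzero because $V$ is $\mathcal{X}$-torsion free and $V \neq 0$), hence $VX = V$; thus right multiplication by $X$ is surjective on $V$, and it is injective by $\mathcal{X}$-torsion-freeness, so $X$ acts bijectively. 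Therefore $V = V\mathcal{X}^{-1}$ inherits the $T$-module structure, and since $S \to T$ is an epimorphism of rings (localisation), the $S$-submodule lattice of $V$ coincides with its $T$-submodule lattice, so $V$ is simple as a $T$-module as well.

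I expect the only genuinely delicate point to be the degree-reduction step in the ``if'' direction of (1); everything in (2) and the ``only if'' direction is formal, relying just on normality of $X$, the Ore property of $\mathcal{X}$, and simplicity of $V$.
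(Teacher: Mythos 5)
The easy half of (1) and all of (2) in your write-up are fine and essentially the paper's own argument: normality of $X$ makes $VX$ and the set of elements killed by $X^n$ into $S$-submodules, simplicity gives $VX=V$, injectivity of the action of $X$ comes from torsion-freeness, and so the $S$-structure extends to $T=S\langle X^{-1}\rangle$. The problem is the hard direction of (1). Your central claim — that a nonzero two-sided ideal of $S$ must contain a nonzero element of $R$, established by degree reduction — is false as a statement about ideals: $\theta S=S\theta$ is a nonzero two-sided ideal with $\theta S\cap R=0$, and your reduction cannot lower the degree of an element of the form $r\theta^n$. What the reduction really gives, for $P$ a nonzero two-sided ideal and $f=\sum_{i=0}^n r_i\theta^i\in P$ of minimal degree, is that $\alpha^n(a)f-fa\in P$ has smaller degree for every $a\in R$, hence is zero, so $r_i\bigl(\alpha^n(a)-\alpha^i(a)\bigr)=0$ for all $a$ and all $i<n$; since $R$ is a domain and $|\alpha|=\infty$ this forces $f=r_n\theta^n$ — not $f\in R$. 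Note also that your sketch nowhere uses $|\alpha|=\infty$, and the lemma genuinely fails without that hypothesis: for $\alpha=\mathrm{id}$ and $R=k[[X]]$ the module $S/(1-X\theta)S\cong k((X))$ is simple, $\mathcal{X}$-torsion free and unfaithful, and its annihilator meets $R$ trivially. So the degree-reduction step cannot be ``made clean'' using only the ideal structure; some use of the module (or of primitivity) is indispensable.

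The argument can be repaired in two ways. You could finish your reduction: the minimal-degree element of $\mathrm{Ann}_S(V)$ has the form $uX^m\theta^n$ with $u$ a unit of $R$ (this is where $|\alpha|=\infty$ enters); if $VX^m=0$ you are done, and otherwise $VX^m=V$ by simplicity, so $V\theta^n=0$, and since $\theta$ is normal the elements killed by $\theta$ form a nonzero submodule, whence $V\theta=0$ and $V$ is a simple module over $S/\theta S\cong R$, i.e.\ $V\cong R/XR=k$, which is $\mathcal{X}$-torsion after all. The paper instead argues in the contrapositive and in the opposite direction to your claim: if $V$ is $\mathcal{X}$-torsion free and $P:=\mathrm{Ann}_S(V)\neq 0$, then $P\cap R=0$ (any nonzero element of $P\cap R$ would be a unit times a power of $X$), so $PT$ is a proper ideal of $T$; since $T\langle\theta^{-1}\rangle$ is simple — again the point where $|\alpha|=\infty$ is used — one gets $\theta\in P$, and primitivity of $P$ then forces $P=XS+\theta S$, contradicting torsion-freeness. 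Either route closes the gap; as written, your proof does not.
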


\begin{proof}
(1) Suppose $V$ is faithful. Since $V$ is simple and $X$ is normal, if, for any $n \geq 1$, $X^n$ kills a nonzero element of $V$ then $VX^n = 0$, a contradiction. So $V$ is $\mathcal{X}-$torsion free.

For the reverse implication, suppose that $V$ is a $\mathcal{X}$-torsion free simple $ S$-module, and suppose that $P := \mathrm{Ann}_{ S}(V) \neq \{0\}$. Then $P \cap R = \{0\}$, so $PT$ is a  proper prime ideal of $T$. But $T\langle \theta^{-1} \rangle$ is a simple ring by  \cite[Theorem 1.8.5]{McCR}, and so $\theta \in P$. Since $P$ is a primitive ideal of $ S$, this forces $P$ to be $\theta  S + X S$, contradicting the fact that $V$ is $\mathcal{X}$-torsion free. Therefore $V$ is faithful.

    (2)  Since $X$ is normal and $VX \neq 0$, the $ S-$submodule
$VX$ must equal $V$, so $V$ admits a structure as a $T-$module since $T =
 S\langle X^{-1}\rangle $.
 \end{proof}

\begin{proposition}\label{samehull}\begin{enumerate}
\item[(1)] Let $U$ be an $\mathcal{X}$-torsion free $ S-$module.
Then
$$ E_{ S}(U) = E_T(U \otimes_{ S} T).$$
\item[(2)] Let $V$ be a faithful simple $ S-$module. Then $E_{ S}(V)=E_T(V)$.
\end{enumerate}
\end{proposition}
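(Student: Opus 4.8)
The plan is to establish (1) first and then deduce (2) from it together with Lemma~\ref{both}. Two preliminary remarks. Since $\mathcal{X}$ is a left and right Ore set, $U\otimes_S T$ may be identified with the Ore localisation $U\mathcal{X}^{-1}$, and the kernel of the canonical map $U\to U\otimes_S T$ is precisely the $\mathcal{X}$-torsion submodule of $U$; so when $U$ is $\mathcal{X}$-torsion free we may and do regard $U$ as an $S$-submodule of $U\otimes_S T$. Also $T=Q[\theta;\alpha]$ is a principal right and left ideal domain, so injective hulls of $T$-modules exist. The proof of (1) then reduces to two claims about $E:=E_T(U\otimes_S T)$: (i) $E$, viewed by restriction as an $S$-module, is injective; and (ii) $E$ is an essential extension of $U$ as an $S$-module. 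Granting these, $E$ is an injective hull of $U$ over $S$, which is exactly the assertion of (1).

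For (i) I would invoke the standard fact that an injective module restricts to an injective module along a ring map for which the target is flat on the appropriate side. Concretely, $T$ is flat as a left $S$-module (Ore localisations are flat), and for every right $S$-module $N$ the extension--restriction adjunction gives a natural isomorphism $\mathrm{Hom}_T(N\otimes_S T, E)\cong \mathrm{Hom}_S(N,E)$. Thus the functor $N\mapsto \mathrm{Hom}_S(N,E)$ is the composite of the exact functors $-\otimes_S T$ and $\mathrm{Hom}_T(-,E)$ (the latter exact because $E$ is $T$-injective), hence is itself exact, i.e. $E$ is $S$-injective.

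Claim (ii) is the part that needs care, and I expect it to be the main obstacle. It splits into two halves. First, $U\subseteq U\otimes_S T$ is essential as $S$-modules: given a nonzero $S$-submodule $N$ and $0\neq y\in N$, write $y=uX^{-n}$ with $u\in U$ and $n\geq 0$; then $yX^{n}=u\in N\cap U$, and $u\neq 0$ because $X$ acts injectively on the $T$-module $U\otimes_S T$. Second, any essential extension $A\subseteq B$ of right $T$-modules remains essential after restriction to $S$: if $N$ is a nonzero $S$-submodule of $B$, then the $T$-submodule it generates is $NT=\bigcup_{m\geq 0}NX^{-m}$, which is nonzero, so $NT\cap A\neq 0$; choosing $0\neq a\in NX^{-m}\cap A$ we obtain $aX^{m}\in N\cap A$, and $aX^{m}\neq 0$ since $X^{m}$ is invertible on $B$. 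Applying the second half to $U\otimes_S T\subseteq E$ and combining with the first half, $U\subseteq E$ is essential over $S$, completing the proof of (1). The delicacy is entirely in verifying that restricting scalars from $T$ to $S$ introduces no new ``small'' submodules, which goes through precisely because $\mathcal{X}$ consists of powers of the normal element $X$.

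For (2): if $V$ is a faithful simple $S$-module, then $V$ is $\mathcal{X}$-torsion free by Lemma~\ref{both}(1), and by Lemma~\ref{both}(2) its $S$-module structure extends to a $T$-module structure; consequently the canonical map $V\otimes_S T\to V$ is an isomorphism, since localising at $\mathcal{X}$ a module on which $\mathcal{X}$ already acts invertibly changes nothing. Now (1) applied with $U=V$ yields $E_S(V)=E_T(V\otimes_S T)=E_T(V)$.
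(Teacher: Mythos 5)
Your argument is correct and structurally the same as the paper's: both proofs show that $U\subseteq U\otimes_S T\subseteq E_T(U\otimes_S T)$ are essential extensions of $S$-modules by clearing the denominator $X^{-n}$ (using that $X$ acts invertibly on any $T$-module), establish that $E_T(U\otimes_S T)$ is injective as an $S$-module, and then deduce (2) from Lemma \ref{both} exactly as you do, via $V=V\otimes_S T$. The only divergence is in the injectivity step, where the paper checks Baer's criterion directly by extending a map on a right ideal $I$ of $S$ to $IT$ via $\overline{f}(it)=f(i)t$ and then using $T$-injectivity, whereas you invoke flatness of $T$ as a left $S$-module together with the adjunction $\mathrm{Hom}_T(-\otimes_S T,E)\cong\mathrm{Hom}_S(-,E)$ — a standard and equally valid justification of the same fact.
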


\begin{proof} (1) If $0 \neq u \otimes_S t = u \otimes_S sX^{-n} \in U \otimes_{ S} T$, with $u \in U, \, s \in  S, \, t \in T$ and $n \in \mathbb{N}$, then $ 0 \neq (u \otimes_{ S} t)X^n \in U$, so that $U \otimes_{ S} T$ is an essential extension of $U$ as
$ S-$modules. The result will therefore follow if we show that
\begin{equation}\label{done} E_{ S}(U \otimes_{ S} T) =
E_T(U \otimes_{ S} T).
\end{equation}
A similar argument to that just given shows that $E_T(U \otimes_{ S} T)$ is an essential extension of $U \otimes_{ S} T$ as $ S$-modules, so that it remains only to show that  $E_T(U \otimes_{ S} T)$ is an injective $ S-$module. For this, let $0\neq  I\lhd_r  S$ and $f\in \mathrm{Hom}_{ S}(I, E_T(U \otimes_{ S} T))$. Define $\overline{f}:IT\longrightarrow E_T(U \otimes_{ S} T)$ such that $\overline{f}(it):=f(i)t$ for $i\in I$ and $t\in T$. It is easy to check that $\overline{f}$ is well-defined, with $\overline{f}_{|I} = f$. So $\overline{f}$ extends to a $T$-homomorphism from $T$ to $E_T(U \otimes_{ S} T)$, and the restriction of this map to $ S$ gives the required extension of $f$ to $ S$. This proves (1).

\medskip

\noindent (2) Let $V$ be a faithful simple $ S$-module. By Lemma
\ref{both} the $ S$-module structure of $V$ extends to a $T$-module structure, with $V = VX$. Hence, $V = V \otimes_{ S} T$,
and so (2) is a special case of (1).
\end{proof}

Let $V$ be a faithful simple $ S$-module. Since $T$ is a prime Noetherian ring of Krull dimension 1, $E_T(V)$ satisfies $(\diamond)$ as a $T$-module (see for instance \cite[Proposition 5.5]{Musson80}). Regarding $(\diamond)$ for $ S$, in the light of Proposition \ref{samehull}(2) and Lemma \ref{torsion1}(2), the remaining issue therefore is:

\begin{question}\label{crux} Given a faithful simple $ S$-module
$V$, a simple $T$-module $W$ which occurs as a subfactor of $E_T(V)$, and
a finitely generated $ S$-submodule $W_0$ of $W$, does $W_0$ have a finite composition series?
\end{question}

\noindent Theorem \ref{main} gives a positive answer to this question in case $R=k[[X]]$.

\bigskip

\section{Irreducible elements}\label{irred}

\subsection{Elementary lemmas}\label{irredlemmas}

\noindent We have seen in $\S$\ref{injective} that the injective hulls of
simple $ S-$ and simple $T-$modules are closely connected. Recall that $T$ is a principal right ideal ring by \cite[Theorem 1.2.9(ii)]{McCR}. Hence, to better understand the representation theory of $ S$ and $T$ we need to study the irreducible elements of these algebras, where, by definition, an \emph{irreducible element} $s$ of a ring $S$ is a
non-zero non-unit of $S$ such that, whenever $s = ab$ with $a,b \in S$,
then either $a$ or $b$ is a unit of $S$.

\begin{lemma}\label{irred} Let $S$ be a noetherian domain. Then every non-zero non-unit of $S$ can be written as a finite product of irreducible elements.
\end{lemma}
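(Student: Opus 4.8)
The plan is to use Noetherian induction on the lattice of principal right ideals. Suppose, for contradiction, that the set $\Sigma$ of nonzero non-units of $S$ that cannot be written as a finite product of irreducibles is non-empty. Then the set of right ideals $\{ sS : s \in \Sigma \}$ is a non-empty collection of right ideals of the Noetherian ring $S$, hence has a maximal element, say $aS$ with $a \in \Sigma$. Since $a \in \Sigma$, in particular $a$ is not itself irreducible (an irreducible element is trivially a product of one irreducible), so we may write $a = bc$ with neither $b$ nor $c$ a unit of $S$.

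Next I would observe that, because $S$ is a domain and $b, c$ are non-units, both $b$ and $c$ are nonzero non-units, and moreover $aS = bcS \subseteq bS$ with the inclusion proper: if $bS = bcS$ then $b = bcs$ for some $s \in S$, so $b(1 - cs) = 0$, and since $S$ is a domain and $b \neq 0$ we get $cs = 1$, contradicting that $c$ is a non-unit (here one uses that in a Noetherian domain a left-invertible element is invertible, or more directly that $cs=1$ forces $c$ to be a unit in a Noetherian ring by a standard argument). Hence $aS \subsetneq bS$. To run the symmetric argument for $c$, note $aS = bcS$; if $cS \subseteq aS$ were to fail to be proper we argue similarly. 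More carefully: I claim both $bS$ and $cS$ strictly contain $aS$. The inclusion $aS \subseteq cS$ need not hold on the nose since $a = bc$ lies in $bS$, not obviously in $cS$; instead one shows $aS = bcS \subsetneq bS$ as above, and separately writes $a = bc$ and considers the right ideal generated by $c$ directly — since $c$ is a non-unit, $cS \subsetneq S$, and the factorization $a=bc$ with $b$ a non-unit together with maximality of $aS$ in $\{sS : s\in\Sigma\}$ forces $c \notin \Sigma$.

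Then by maximality of $aS$, since $bS$ properly contains $aS$ we have $b \notin \Sigma$, so $b$ is a finite product of irreducibles; and since $c$ is a non-unit with $cS \subsetneq S$, if $c \in \Sigma$ then... — here I would instead use the cleaner route: apply the maximality argument to the left ideal $Sc$ as well, or simply note that by symmetry of the whole setup (every Noetherian domain argument is left-right symmetric here) we may assume the maximal element was chosen so that both proper factors escape $\Sigma$. The crispest version: $b \notin \Sigma$ because $aS \subsetneq bS$, and $c \notin \Sigma$ because $Sa \subsetneq Sc$ (the left-handed version of the maximality argument, noting $Sa = Sbc \subsetneq Sc$ by the domain property since $b$ is a non-unit), using that the set of nonzero non-units not expressible as a product of irreducibles is the same whether tested on the left or right, as irreducibility is a two-sided notion. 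Thus $b = p_1 \cdots p_m$ and $c = q_1 \cdots q_n$ with each $p_i, q_j$ irreducible, whence $a = p_1 \cdots p_m q_1 \cdots q_n$ is a finite product of irreducibles, contradicting $a \in \Sigma$. Therefore $\Sigma = \emptyset$, proving the lemma.

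The main obstacle is the bookkeeping around left versus right ideals: the factorization $a = bc$ naturally gives $aS \subsetneq bS$ on the right but only $Sa \subsetneq Sc$ on the left, so a purely one-sided Noetherian induction does not immediately hand back both $b$ and $c$ as "smaller." The fix is either to invoke that $S$, being a Noetherian domain, is Noetherian on both sides and run the maximality argument once on the right for $b$ and once on the left for $c$ (legitimate since "is a finite product of irreducibles" is a left-right symmetric property), or to note that $b$ and $c$ are individually non-units so each has strictly smaller principal right ideal after absorbing — the honest simplest path being the two-sided one. I expect the only genuinely delicate point to be justifying that a one-sided inverse in a Noetherian domain is a two-sided inverse, which follows from the standard fact that surjective endomorphisms of Noetherian modules are injective (applied to left multiplication by the relevant element on $S$).
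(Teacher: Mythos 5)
Your argument has a genuine gap at the point where you dispose of the factor $c$. You choose $a\in\Sigma$ so that $aS$ is maximal in the family $\{sS : s\in\Sigma\}$ of principal \emph{right} ideals. That choice does let you conclude $b\notin\Sigma$ from $aS\subsetneq bS$, but it gives you no leverage on $c$: the only inclusion available for $c$ is $Sa\subsetneq Sc$, an inclusion of principal \emph{left} ideals, and $a$ was not chosen to make $Sa$ maximal in $\{Ss : s\in\Sigma\}$. The left--right symmetry of the property ``is a finite product of irreducibles'' is true but irrelevant; what fails is that a single element cannot be assumed simultaneously maximal for the right-ideal ordering and the left-ideal ordering, so ``run the maximality argument once on the right for $b$ and once on the left for $c$'' is not a legitimate move for one fixed $a$. (The point you flagged as delicate is in fact harmless: in a domain a one-sided inverse is two-sided, since $cs=1$ gives $(sc-1)s=0$, hence $sc=1$.) The paper itself does not argue at all here: it simply cites Cohn, Proposition 0.9.3.

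The way to make a chain-condition proof work --- essentially the argument behind that citation --- is to use the two chain conditions in different roles rather than through a single maximal counterexample. First, ACC on principal right ideals shows every nonzero non-unit $a$ has an irreducible \emph{left} factor: writing $a=a_1b_1$, $a_1=a_2b_2,\dots$ with all factors non-units gives $aS\subsetneq a_1S\subsetneq a_2S\subsetneq\cdots$ (proper by cancellation in the domain), which must terminate, and it can only terminate at an irreducible $a_n$, so $a=a_n(b_n\cdots b_1)$. Then iterate on the right cofactors: $a=p_1c_1$, $c_1=p_2c_2,\dots$ with each $p_i$ irreducible; if no cofactor is ever a unit, then $Sa\subsetneq Sc_1\subsetneq Sc_2\subsetneq\cdots$ is a strictly ascending chain of principal left ideals (again proper by cancellation, as each $p_i$ is a non-unit), contradicting ACC on principal left ideals. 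When the process stops, $a=p_1\cdots p_{n-1}(p_nc_n)$ with $c_n$ a unit exhibits $a$ as a finite product of irreducibles. Since the $S$ of the lemma is Noetherian on both sides, both chain conditions are available; but as written, your single-sided Noetherian induction does not go through.
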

\begin{proof} \cite[Proposition 0.9.3]{Cohn}.
\end{proof}

\begin{lemma}\label{irreducible}
\begin{enumerate}
\item[(1)] Let $z \in  S\setminus X S$.  Then $z$ is irreducible in $ S$ if and only if $z$ is irreducible in $T$.
\item[(2)] Let $W$ be a simple $T$-module. Then there is an element $z$ of $ S$ such that $z$ is irreducible in both $ S$ and $T$, with $W\cong T/zT.$
\end{enumerate}
\end{lemma}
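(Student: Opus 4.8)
The plan rests on a handful of structural observations, which I would record first. The units of $S$ are exactly $R\setminus XR$, and the units of $T=Q[\theta;\alpha]$ are exactly $Q\setminus\{0\}$, i.e.\ the nonzero elements of $\theta$-degree $0$. Since $X$ is normal and $XS$ has the Artin--Rees property \cite[Proposition 4.2.6]{McCR}, the Krull intersection theorem together with the fact that $S$ is a domain gives $\bigcap_{n\geq0}X^nS=0$, so each nonzero $s\in S$ has a well-defined $X$-adic valuation $\nu(s)$, with $s=X^{\nu(s)}s'$ for a unique $s'\in S\setminus XS$; because $S/XS\cong k[\theta]$ is a domain one gets $\nu(st)=\nu(s)+\nu(t)$, so $\nu$ extends to a homomorphism from $T\setminus\{0\}$ to $\Z$. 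Conjugation by $X$ is an automorphism $\sigma$ of $S$, and of $T$, which fixes $R$ pointwise and hence preserves both $\nu$ and $\theta$-degree. Finally, if $\lambda$ denotes left multiplication by a unit of $T$, then $\lambda$ is an automorphism of $T$ as a right module over itself, so $T/I\cong T/\lambda(I)$ for every right ideal $I$.

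For part (1) I would clear denominators in both directions. Suppose $z\in S\setminus XS$ is irreducible in $S$ and $z=ab$ in $T$ with $\deg_\theta a,\deg_\theta b\geq1$ (so neither is a unit of $T$). Writing $a=X^{-i}a_0$ and $b=b_0X^{-j}$ with $a_0,b_0\in S$, one gets $X^{i}zX^{j}=a_0b_0$; pushing all powers of $X$ to the outside via normality of $X$ and then cancelling them using additivity and $\sigma$-invariance of $\nu$ together with $\nu(z)=0$, one arrives at a genuine factorisation $z=c_1c_2$ in $S$ with $c_1,c_2\in S\setminus XS$, where, writing $a_0=X^{\nu(a_0)}a_1$ and $b_0=b_1X^{\nu(b_0)}$ with $a_1,b_1\in S\setminus XS$, the $c_i$ are $\sigma$-twists of $a_1$ and of $b_1$. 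Irreducibility of $z$ in $S$ forces one $c_i$, hence the corresponding $a_1$ or $b_1$, to be a unit of $S$, hence to lie in $R\setminus XR$ and so to have $\theta$-degree $0$; tracing back, the corresponding factor $a$ or $b$ then has $\theta$-degree $0$, contradicting the assumption. Conversely, if $z\in S\setminus XS$ is irreducible in $T$ and $z=ab$ in $S$, then one of $a,b$ — say $a$ — is a unit of $T$, hence has $\theta$-degree $0$, so $a=X^{\nu(a)}a'$ with $a'\in R\setminus XR$ a unit of $S$; and if $\nu(a)\geq1$ then $z=ab\in XS$, impossible, so $\nu(a)=0$ and $a=a'$ is a unit of $S$. (Throughout one notes that such a $z$ is a nonzero non-unit of both rings, since $z\in S\setminus XS$ forces $\deg_\theta z\geq1$.)

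For part (2), use that $T$ is a principal right ideal domain to write the simple $T$-module $W$ as $T/wT$ with $wT$ a maximal right ideal. Put $w=X^{-n}w_0$ with $w_0\in S\setminus\{0\}$; then left multiplication by $X^{n}$ gives a right $T$-module isomorphism $T/wT\cong T/w_0T$. Now factor $w_0=X^{m}z$ with $m=\nu(w_0)$ and $z\in S\setminus XS$; left multiplication by $X^{m}$ gives $T/zT\cong T/w_0T\cong W$. Since $W$ is simple, $zT$ is a maximal right ideal of the principal right ideal domain $T$, so $z$ is a nonzero non-unit generating a maximal right ideal and is therefore irreducible in $T$; part (1) then yields that $z$ is irreducible in $S$ as well, completing the proof.

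I expect the main obstacle to lie entirely in part (1): keeping the normality twists by $\sigma$ straight while clearing denominators, and — the genuinely delicate point — checking that a unit of $S$ appearing as a factor of $z$ forces the matching factor of the original equation $z=ab$ to have $\theta$-degree $0$ (hence to be a unit of $T$), which is exactly where one uses that the units of $S$ sit inside $R$. Part (2) is then short, the one idea being that left multiplication by a unit of $T$ is a right-module automorphism, which lets one replace $w$ by an element of $S\setminus XS$ without disturbing the isomorphism type of $T/wT$.
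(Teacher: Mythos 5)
Your proposal is correct and takes essentially the same route as the paper: in both directions of (1) you clear denominators and exploit that $XS$ is completely prime (your $X$-adic valuation $\nu$ and the $\sigma$-twists are just bookkeeping variants of the paper's computation, which shows directly that the negative powers of $X$ must vanish), and (2) is handled exactly as in the paper by normalising a generator of the maximal right ideal by a power of $X$ and invoking (1). One small caveat: your parenthetical justification ``$z\in S\setminus XS$ forces $\deg_\theta z\geq 1$'' is not true on its own (units of $R$ lie in $S\setminus XS$), but it does follow once $z$ is assumed irreducible in either ring, which is the case everywhere you use it.
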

\begin{proof} (1) $\Longrightarrow$: Let $z \in  S \setminus X S$, and suppose that $z$ is irreducible in $ S$. Let
\begin{equation}\label{start} z \; = \; (aX^{-\ell})(bX^{-m})
\end{equation}
be a factorisation of $z$ in $T$, with $a,b \in  S \setminus X S$ and $\ell, m \in \mathbb{Z}_{\geq 0}$. Writing $b = \sum_{i=0}^tb_i\theta^i$ with $b_i \in  R$, this yields
$$ z \; = \; a(\sum_i b_i N_i^{\alpha}(q^{\ell})\theta^i)X^{-(\ell + m)},$$
with $a$ and  $\sum_i b_i N_i^{\alpha}(q^{\ell})\theta^i$ in $ S\setminus X S$. Thus
$$ z X^{(\ell + m)} \; = \;  a(\sum_i b_i N_i^{\alpha}(q^{\ell})\theta^i), $$
where the factors appearing in the above equation are all in $ S$, and all except $X^{(\ell + m)}$ are not in $X S$.  Since $X S$ is a completely prime ideal of $ S$, this forces $\ell + m = 0$ and hence
$$ \ell = m= 0.$$
Thus (\ref{start}) is a factorisation of $z$ in $ S$, so one of $a,b$ is a unit in $ S$, as required.

\medskip

\noindent $\Longleftarrow$: Let $z \in  S \setminus X S$, and suppose that $z$ is irreducible in $T$. Suppose that $z = ab$ where $a$ and $b$ are non-zero non-units of $ S$. Thus $a$ or $b$ must be a unit of $T$, say $a$ is such. So
 $$a \in Q \cap  S \; = \;  R, $$
 where $Q=R\langle X^{-1} \rangle$ is the quotient field of $R$.
But $a$ is not a unit of $ S$, thus $a \in X R$. This forces
$z \in X S$, a contradiction. Therefore no such factorisation of
$z$ exists, and $z$ is irreducible in $ S$. If $b$ is a unit in $T$ the argument is similar.

\medskip

(2) Let $W$ be a simple $T$-module. Since $T$ is a principal right ideal ring by \cite[Theorem 1.2.9(ii)]{McCR}, there is an irreducible element $z$ of $T$ such that $W \cong T/zT$. By multiplying $z$ by a suitable power of $X$ we can assume that $z \in  S \setminus X S$, so that $z$ is irreducible  in $ S$ by (1), as required.
\end{proof}

\bigskip

\subsection{Taxonomy}\label{taxonomy}

\noindent Consider the following taxonomy of the irreducible elements of $ S$. Since $ S/X S \cong k[\theta]$, an irreducible element $z$ of $ S$ can be uniquely written in the form
$$ z \, =\,  f + Xs, \qquad \textit{ where } f \in k[\theta], \, s \in  S.$$
After normalising by multiplying $z$ by a suitable unit in $ S$ - that is, by a suitable element from $ R \setminus X R$ - there are the following three mutually exclusive possibilities for $z$:
\begin{eqnarray*}
\textbf{(A)} \qquad z &=& X \textit{ or } z = \theta, \qquad \qquad f
= 0, \, s = 1 \textit{ or } f= \theta, \, s= 0; \\
\textbf{(B)} \qquad z &=& 1 + Xs, \qquad \qquad \qquad s \in  S\setminus  R;\\
\textbf{(C)} \qquad z &=& f + Xs, \qquad \qquad f \in k[\theta]\setminus k,\, f \, \textit{monic}, \, s \in  S, \, z \neq \theta.\\
\end{eqnarray*}
\noindent Note that we forbid $s \in  R$ in type $(\mathbf{B})$ in order to exclude units from the list. We'll make use of these labels below,
where we aim to develop our understanding of the simple and the 1-critical $ S$-modules. We first need an easy lemma:

\begin{lemma}\label{intersect}
Let $z\in S\backslash X S$. Then
\begin{enumerate}
\item[(1)] $ S\cap zT=z S$.
\item[(2)] $ S/z S$ is a $\mathcal{X}$-torsion free $ S-$module.
\end{enumerate}
\end{lemma}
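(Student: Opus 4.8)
The plan is to prove (1) first; part (2) is then a one-line formal consequence. For (1), the inclusion $zS\subseteq S\cap zT$ is immediate, so the work lies in the reverse inclusion. Given $0\neq w\in S\cap zT$ (the case $w=0$ being trivial), I would use that $\mathcal{X}$ is an Ore set to write $w=zsX^{-n}$ with $s\in S$ and $n\geq 0$, whence $wX^n=zs$ in $S$. The whole point is to show one may take $n=0$; equivalently, that $zs\in X^nS$ forces $s\in X^nS$ (recall that $z\notin XS$ by hypothesis). I would prove this implication by induction on $n$: since $X^nS\subseteq XS$ and $XS$ is completely prime --- because $S/XS\cong k[\theta]$ is a domain --- one gets $s\in XS$, say $s=Xs_1$; then, moving $X$ past $z$ using normality ($zX=Xz'$ with $z'\notin XS$) and left-cancelling one copy of $X$, the problem reduces to $z's_1\in X^{n-1}S$, to which the inductive hypothesis applies. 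Once $s=X^ns'$, normality again gives $sX^{-n}=X^ns'X^{-n}\in S$, so $w=z\,(sX^{-n})\in zS$. Equivalently, one can package the inductive step via the additive order function $\nu(a):=\max\{m:a\in X^mS\}$, which is finite because the coefficients of any element of $X^mS$ lie in $X^mR$ and $\bigcap_m X^mR=0$; then $\nu(w)+n=\nu(wX^n)=\nu(zs)=\nu(s)$ gives $s\in X^nS$ at once.

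Part (2) follows with no extra effort: if the class of some $w\in S$ in $S/zS$ is $\mathcal{X}$-torsion, then $wX^n\in zS$ for some $n$, so $w\in zT$; as also $w\in S$, part (1) gives $w\in zS$, i.e. the class of $w$ is zero.

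The only real obstacle is the bookkeeping forced by the fact that $X$ is merely normal, not central: one must repeatedly commute powers of $X$ past elements of $S$ and check that the twisted elements so produced remain outside the completely prime ideal $XS$. Each such check is a one-liner --- using that $S$ is a domain and $X$ is normal, so that, for instance, $zX=Xz'$ with $z'\in XS$ would force $z\in XS$ --- so no genuine difficulty is expected; the proof is in essence a clearing-of-denominators computation controlled entirely by the completely prime ideal $XS$.
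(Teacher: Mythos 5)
Your proof is correct and takes essentially the same route as the paper: clear the denominator to get $wX^n=zs$, use the complete primality of $XS$ (together with the normality of $X$) to force $s\in X^nS=SX^n$, conclude $sX^{-n}\in S$ and hence $w\in zS$, and deduce (2) from (1) exactly as the paper does. Your explicit induction (or the order-function $\nu$) simply fills in the step the paper states in one line.
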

\begin{proof}
(1) Let $t \in T$ with $zt = s_0 \in zT\cap  S$,  with $t=sX^{-n}$ for some $s\in  S$ and $n\in \Z_{\geq 0}$. Thus
\begin{equation}\label{supa} zs=s_0X^n.
\end{equation}
Suppose that $n \geq 1$. Since $X S$ is a completely prime ideal
of the domain $ S$ and $z \notin X S$, (\ref{supa}) shows that $s\in X^n S= SX^n$.  Write $s=\hat{s}X^n$ for some $\hat{s}\in  S$. Then $ t  =  sX^{-n}  =  \hat{s}X^nX^{-n}=\hat{s}\in  S,$ as required.

\noindent (2) Given $s\in S$, if $sX^n\in zS$ for some $n\in \N$, then $s\in  S\cap zT=z S$ by (1).
\end{proof}

\begin{remark}\label{extend}
Lemma \ref{intersect} applies to a product $z$ of type $(\mathbf{B})$ or $(\mathbf{C})$ irreducibles in $ S$.
\end{remark}

\bigskip

\section{Simple and critical $ S-$modules and $T-$modules}\label{criticals}

\subsection{Recap on critical modules}\label{critical}

Denote the (Gabriel-Rentschler) Krull dimension of a module $M$ by $\mathrm{Kdim}(M)$, and recall that our main rings of interest, $ S$ and $T$, have
$$\mathrm{Kdim}( S) = 2 \quad \textit{  and  } \quad \mathrm{Kdim}(T) = 1, $$
by \cite[Theorem 6.5.4(i)]{McCR}. Given a ring $R$ with Krull dimension and a non-negative integer $n$, an $R$-module $M$ is $n$\emph{-critical} if $\mathrm{Kdim}(M) = n$ and $\mathrm{Kdim}(N) < n$ for every proper factor $N$ of $M$. For background on critical modules, see for example \cite[$\S$6.2]{McCR}.

\begin{definition}\label{criticalcomp} Let $R$ be a right noetherian ring, and $M$ a finitely generated (right) $R$-module. A \emph{critical composition series} of $M$ is a finite chain
$$ 0 = M_0 \subset M_1 \subset \cdots \subset M_n = M$$
of submodules of $M$ such that
\begin{enumerate}
\item[$(\bullet)$] $M_i/M_{i-1}$ is critical for $i = 1, \ldots , n$;
\item[$(\bullet)$] $\mathrm{Kdim}(M_i/M_{i-1}) \leq \mathrm{Kdim}(M_{i+1}/M_i)$ for all $i = 1, \ldots , n-1$.
\end{enumerate}
\end{definition}

Finitely generated critical $R$-modules $V$ and $W$ are called \emph{hull-similar} if their injective hulls are isomorphic. Critical modules are uniform, and so critical modules $V$ and $W$ are hull-similar if and only if they share an isomorphic non-zero submodule. It is clear that hull-similarity is an equivalence relation on the class of finitely generated critical $R$-modules. The generalisation of the Jordan-Holder theorem to this setting is as follows:

\begin{theorem}\label{compexist} (\cite[Proposition 6.2.20]{McCR}) Let $R$ be a right noetherian ring, and $M$ a finitely generated (right) $R$-module. Then:
\begin{enumerate}
\item[(1)] $M$ has a critical composition series.
\item[(2)]Any two critical composition series of $M$ have the same length, and after a suitable permutation the composition factors are pairwise hull-similar.
\end{enumerate}
\end{theorem}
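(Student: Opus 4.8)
The plan is to mimic the classical Jordan--Hölder argument, with ``critical factor'' playing the role of ``simple factor'' and ``hull-similar'' the role of ``isomorphic''. The two structural facts I would lean on are: (i) every nonzero submodule $N$ of a critical module $V$ is itself critical with $\mathrm{Kdim}(N) = \mathrm{Kdim}(V)$, and is hull-similar to $V$; and (ii) every proper factor of a critical module has strictly smaller Krull dimension. Write $n := \mathrm{Kdim}(M)$.

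\emph{Part (1).} First I would produce \emph{some} chain $0 = C_0 \subset C_1 \subset \cdots \subset C_r = M$ with every factor critical, by iterating \cite[Lemma 6.2.10]{McCR}: take $C_1$ a critical submodule of $M$, then $C_2/C_1$ a critical submodule of $M/C_1$, and so on, the process terminating because $M$ is noetherian. The real content of (1) is to arrange the extra monotonicity requirement, and for this I would argue by induction on $n$. The submodule $M^{\flat}$ equal to the (finite, by the noetherian hypothesis) sum of all submodules of $M$ of Krull dimension $< n$ has $\mathrm{Kdim}(M^{\flat}) < n$, and $M/M^{\flat}$ has all nonzero submodules of Krull dimension exactly $n$. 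By induction $M^{\flat}$ has a critical composition series, all of whose factors have dimension $< n$; it then suffices to build, for the ``homogeneous'' module $N := M/M^{\flat}$, a chain with all factors $n$-critical, which one does by selecting an $n$-critical submodule $N_1 \le N$ for which $N/N_1$ remains homogeneous (so the construction iterates without violating monotonicity) and inducting on the uniform dimension of $N$.

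\emph{Part (2).} The same decomposition is the heart of the uniqueness statement. One checks that in \emph{any} critical composition series of $M$ the monotonicity condition forces the factors of dimension $< n$ to form an initial segment, and that the corresponding term of the series must equal $M^{\flat}$ --- here one uses that a module admitting a critical composition series with all factors $n$-critical has all of its nonzero submodules $n$-dimensional. So every critical composition series of $M$ restricts to one of $M^{\flat}$ (handled by the induction on $n$) and induces one of the homogeneous module $M/M^{\flat}$, and we are reduced to proving uniqueness when every nonzero submodule of $M$ is $n$-dimensional, a setting in which monotonicity is automatic. There I would induct on the length of a critical composition series and run an exchange argument on the bottom terms $N_1$, $L_1$ of two given series: both are $n$-critical submodules, both $M/N_1$ and $M/L_1$ are again homogeneous, and when $N_1 \cap L_1 \neq 0$ this intersection is $n$-critical and hull-similar to each of $N_1$, $L_1$; one then compares the two series through an intermediate chain running through $N_1 + L_1$ and $N_1 \cap L_1$, invokes the inductive hypothesis on the relevant proper quotients, and deduces equality of lengths together with a hull-similarity-preserving matching of factors.

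I expect the main difficulty to be exactly these exchange and selection steps. Unlike the classical case a critical composition series admits no proper refinement --- inserting $N_1 \cap L_1$ below $N_1$ creates a factor of dimension $< n$ and destroys the all-$n$-critical property --- so the Zassenhaus ``butterfly'' does not apply verbatim; instead one must show by hand that, in the homogeneous case, the sum of two critical submodules with nonzero intersection is again critical, and carry hull-similarity classes through non-monotone intermediate chains while keeping the two nested inductions (on $n$, and on length) consistent. An alternative that bypasses much of this bookkeeping is to define an additive integer-valued ``reduced rank'' on the submodule lattice of the homogeneous module $M/M^{\flat}$, counting $n$-critical factors, and to show it coincides with the common length of all critical composition series; I would retreat to that route if the direct exchange argument became unmanageable.
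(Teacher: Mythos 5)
First, a point of comparison: the paper does not prove this statement at all --- it is quoted from \cite[Proposition 6.2.20]{McCR} (the result is due to Gordon and Robson), so your sketch has to stand on its own, and as written it does not. Both places you yourself flag as ``the main difficulty'' are exactly where the content of the theorem lies, and in the existence part the mechanism you propose cannot close. After passing to the homogeneous module $N=M/M^{\flat}$ you need an $n$-critical submodule $N_1$ with $N/N_1$ again homogeneous, and you propose to obtain it ``by inducting on the uniform dimension of $N$''. But uniform dimension does not decrease under the quotients you are forced to take, and uniform plus homogeneous does not imply critical, so the base case of that induction is the whole problem. Concretely, $R=k[x,y]/(x^2)$ as a module over itself is uniform, every nonzero ideal has Krull dimension $1$, yet $R$ is not $1$-critical since $R/(x)\cong k[y]$ has Krull dimension $1$; its only critical composition series is $0\subset (x)\subset R$, and the quotient $R/(x)$ again has uniform dimension $1$. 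A correct route (in the Gordon--Robson spirit) is different: take any critical submodule $C$ of $N$ (automatically $n$-critical by homogeneity), let $D$ be the preimage in $N$ of the largest submodule of $N/C$ of Krull dimension $<n$, check using homogeneity that $D$ is still $n$-critical (if $0\neq W\subseteq D$ meets $C$, then $D/(W\cap C)$ is an extension of two modules of dimension $<n$; if $W\cap C=0$ then $W$ embeds in $D/C$, contradicting homogeneity) while $N/D$ is homogeneous or zero, and iterate; termination comes from the ascending chain condition, not from uniform dimension. None of this selection-and-enlargement step appears in your outline.

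The uniqueness part has the same character: the exchange argument on the bottom terms $N_1,L_1$ is described but not carried out, and the obstacles you list (critical composition series admit no refinements, the intermediate chain through $N_1+L_1$ and $N_1\cap L_1$ produces factors of dimension $<n$, hull-similarity must be tracked through non-monotone chains) are genuine, so the Schreier--Zassenhaus machinery is not available and the case split $N_1\cap L_1\neq 0$ versus $N_1\cap L_1=0$ has to be argued by hand (in the first case both bottom terms are hull-similar because a nonzero submodule of a critical module is critical of the same dimension; in the second, $N_1$ embeds in $M/L_1$ and vice versa, and one must still match lengths and similarity classes). Your auxiliary claim that in a homogeneous module the sum of two critical submodules with nonzero intersection is critical is in fact true, with a short proof along the lines indicated above, but you neither prove it nor show how it finishes the matching; likewise the identification of the initial segment of a series with $M^{\flat}$ rests on the unproved (though correct) lemma that a module with an all-$n$-critical series is $n$-homogeneous. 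In summary, you have correctly identified the skeleton of the standard argument (the $M^{\flat}$ decomposition, reduction to the homogeneous case, hull-similarity as the right equivalence), but the two lemmas that constitute the actual proof are left open, and the induction proposed for one of them fails; since the paper simply cites \cite[Proposition 6.2.20]{McCR}, you should either do the same or supply those two lemmas in full.
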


\noindent Since $\mathrm{Kdim}( S) =2$, every finitely generated $ S-$module $M$ has $\mathrm{Kdim}(M) \leq 2$ by \cite[Lemma 6.2.5]{McCR}. It's easy to see that the finitely generated 2-critical $ S-$modules are its non-zero right ideals \cite[Proposition 6.3.10]{McCR}, and they form a single hull-similarity class since $ S$ is a domain satisfying the Ore condition. The classification of 0-critical (that is, simple) and 1-critical $ S-$modules runs parallel to the classification of simple $T-$modules, and can therefore exploit the fact that the localisation $T$ of $ S$ is a principal left and right ideal domain,
\cite[Theorem 1.2.9(ii)]{McCR}. We therefore recall in $\S$\ref{lrPID} some classical results about the latter.

\subsection{Principal left and right ideal domains}\label{lrPID}

\begin{definition}\label{similar} (Ore \cite{Ore}) Let $D$ be a left and right PID and let $a,b\in D$.
\begin{enumerate}
\item $a$ is \emph{(right) similar} to $b$ if there exists $u\in D$ such that $1$ is the highest common left factor of $u$ and $b$, (that is, $uD + bD = D$),  and $ua$ is the least common right multiple of $u$ and $b$, (that is, $uaD = uD \cap bD$.)
\item \emph{Left similarity} is defined analogously.
\end{enumerate}
\end{definition}

\noindent The following results can now be proved by straightforward calculation:

\begin{theorem}\label{Jacobson}  Let $D$ be a left and right PID and let $a,b \in D$.
\begin{enumerate}
\item[(1)] ($\mathrm{Ore}$, \cite[Theorem 18]{Ore}, \cite[pages 33-34]{Jacobson}) $a$ and $b$ are right similar if and only if they are left similar (so we may drop the adjectives left and right).
\item[(2)]($\mathrm{Jacobson}$, \cite[Chapter 3, Theorem 4]{Jacobson}) The following are equivalent:
\begin{enumerate}
\item[(i)] $a$ and $b$ are similar.
\item[(ii)] $ D/aD \cong D/bD$.
\item[(iii)] $D/Da \cong D/Db$.
\end{enumerate}
\end{enumerate}
\end{theorem}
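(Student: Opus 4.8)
The plan is to deduce both parts from a single computation identifying module isomorphisms with similarity, together with a duality that interchanges the right module $D/aD$ and the left module $D/Da$. Concretely, I would first prove, purely by manipulating elements of $D$, that for $a,b\in D$ the right $D$-modules $D/aD$ and $D/bD$ are isomorphic if and only if $a$ and $b$ are right similar; the left-handed mirror image then says $D/Da\cong D/Db$ as left $D$-modules if and only if $a$ and $b$ are left similar. Granting these, part (1) reduces to the claim $D/aD\cong D/bD$ (right modules) $\iff$ $D/Da\cong D/Db$ (left modules), and part (2) is then immediate.

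For the module-isomorphism step: given an isomorphism $\phi\colon D/aD\to D/bD$, choose a representative $u$ of $\phi(1+aD)$ and note that right $D$-linearity forces $\phi(x+aD)=ux+bD$ for all $x\in D$. Surjectivity of $\phi$ is equivalent to $uD+bD=D$; well-definedness forces $ua\in bD$; and, writing $uD\cap bD=mD$ (a principal right ideal since $D$ is a right PID) and $m=ua_{0}$, left cancellation in the domain $D$ shows that $\ker\phi$ corresponds to the right ideal $a_{0}D$, so that injectivity of $\phi$ amounts to $a_{0}D=aD$, i.e. to $uaD=uD\cap bD$. Thus an isomorphism exists precisely when there is a $u$ witnessing right similarity of $a$ to $b$; conversely any such $u$ makes $x+aD\mapsto ux+bD$ well defined, injective and surjective. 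The left-module statement is proved verbatim with all sides reversed, using that $D$ is also a left PID.

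For part (1): apply $\mathrm{Hom}_{D}(-,D)$ to the free resolution $0\to D\xrightarrow{\,a\,\cdot\,}D\to D/aD\to 0$ of right $D$-modules (left multiplication by $a$ is right $D$-linear and injective). Since $D/aD$ is torsion and $D$ is torsion-free, $\mathrm{Hom}_{D}(D/aD,D)=0$, and under the identification $\mathrm{Hom}_{D}(D_{D},D)\cong D$ of left $D$-modules the induced map becomes right multiplication by $a$ on $D$; hence $\mathrm{Ext}^{1}_{D}(D/aD,D)\cong D/Da$ as left $D$-modules. Therefore $D/aD\cong D/bD$ as right modules implies $D/Da\cong D/Db$ as left modules, and the symmetric argument gives the converse. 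Chaining this with the two characterizations above yields ``$a,b$ right similar $\iff$ $D/aD\cong D/bD$ $\iff$ $D/Da\cong D/Db$ $\iff$ $a,b$ left similar'', which is (1); and (2) follows at once, since (i)$\iff$(ii) is the right-module characterization while (i)$\iff$(iii) is the left-module characterization combined with (1).

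I expect the only real care to be in the left/right bookkeeping: tracking which side each ring acts on in the $\mathrm{Hom}$ and $\mathrm{Ext}$ identifications, and verifying that the cancellation argument in the module-isomorphism step genuinely produces Ore's condition $uaD=uD\cap bD$ rather than merely a condition up to associates. An alternative to the homological argument for part (1) is Ore's original elementary manipulation, transporting a witness $u$ for right similarity (with $ua=bv$) into a witness for left similarity; either route works, but the $\mathrm{Ext}$ duality seems the shortest to write cleanly.
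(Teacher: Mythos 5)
Your proposal is correct, but note that the paper itself offers no proof of this statement: it is quoted as a classical result of Ore and Jacobson, with only the remark that it ``can now be proved by straightforward calculation'' and references to \cite{Ore} and \cite{Jacobson}. Measured against those classical arguments, your element-level verification that $x+aD\mapsto ux+bD$ is a well-defined isomorphism exactly when $u$ witnesses right similarity (surjectivity $\Leftrightarrow uD+bD=D$, well-definedness plus injectivity $\Leftrightarrow uaD=uD\cap bD$) is essentially Jacobson's calculation establishing (2)(i)$\Leftrightarrow$(ii), with its mirror image giving (i)$\Leftrightarrow$(iii). Where you genuinely diverge is in part (1): Ore proves the left--right symmetry by directly transporting a witness $u$ with $ua=bv$ into a witness for left similarity, whereas you derive it from the duality $\mathrm{Ext}^1_D(D/aD,D)\cong D/Da$ (the transpose), making (1) a formal consequence of the two module-theoretic characterizations; this is slicker and, as you note, both routes work. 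Two small points deserve explicit care: the torsion argument $\mathrm{Hom}_D(D/aD,D)=0$ and the identification $\mathrm{Ext}^1_D(D/aD,D)\cong D/Da$ need $a\neq 0$ (torsion-ness uses that a left and right PID is an Ore domain), and the cancellation of $u$ in your kernel computation needs $u\neq 0$ --- automatic when $a$ and $b$ are non-units, since $\phi(1+aD)\neq 0$ in one direction and $uD+bD=D$ forces $u\neq 0$ in the other; the degenerate cases where $a$ or $b$ is zero or a unit are trivial and are implicitly excluded in the classical statements.
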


\noindent In particular, of course, elements $a$ and $b$ of $D$ are similar if $a = ub$ or $a = bu$ for a unit $u$ of $D$, and this is equivalent to similarity if $D$ is commutative, but not in general.

\medskip

\subsection{Towards a classification of simple $ S-$modules}\label{list}
\noindent Recall the definition of type $\mathrm{{\bf(B)}}$ irreducible elements from $\S$\ref{taxonomy}.

\begin{theorem}\label{Bavula} Let $ S$ and $T$ be as defined in $\S$\ref{notation}.

\begin{itemize}
\item[(1)] Let $V$ be a faithful simple $ S-$module. Then there exists an element $z$ of $ S$ satisfying the following properties (a) and (b).
\begin{enumerate}
\item[(a)] $z$ is irreducible in both $ S$ and $T$;
\item[(b)] As $ S-$modules, $V \cong  S/z S \cong T/zT$.
\end{enumerate}
Moreover, any element $z$ of $ S$ satisfying (b) also satisfies
\begin{enumerate}
\item[(c)] $z$ is a type $\mathrm{{\bf(B)}}$ irreducible.
\end{enumerate}
\item[(2)] Conversely, if $z \in  S$ satisfies $(1)(a)$ and $(1)(c)$, then $ S/z S \cong T/zT$ is a faithful simple $ S-$module.

\item[(3)] Let $z \in  S$ satisfy (1)(a) and (1)(c), and let $y \in  S \setminus X S$. Then the following are equivalent:
\begin{enumerate}
\item[(a)] $ S/z S \cong  S/y S$;
\item[(b)] $T/zT \cong T/yT$ as $T-$modules;
\item[(c)] $z$ and $y$ are similar in $T$.
\end{enumerate}
Moreover, when these equivalent conditions hold $y$ is a type $\mathrm{{\bf(B)}}$ irreducible element of $ S$.

\item[(4)] The simple $ S-$modules which are not faithful are the $\mathcal{X}-$torsion simple modules
$$ V_{f} \quad := \quad  \frac{ S}{X S + f S}, \quad f \in k[\theta], \; f \textit{ a monic irreducible polynomial,}$$
with $V_{f} \cong V_{g}$ if and only if $f = g$.
\end{itemize}
\end{theorem}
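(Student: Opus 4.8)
The plan is to exploit the fact, established in the earlier subsections, that $T$ is a principal right (and left) ideal domain of Krull dimension $1$, together with Lemma \ref{irreducible} and Lemma \ref{intersect}, so that the classification of simple $S$-modules is reduced to a study of irreducible elements in $S$ that lie outside $XS$. First I would split the class of simple $S$-modules into the $\mathcal{X}$-torsion ones and the $\mathcal{X}$-torsion free ones, using Lemma \ref{both}(1): this identifies the faithful simples with the $\mathcal{X}$-torsion free ones and, via Lemma \ref{torsion1}(2) and the argument of Lemma \ref{both}(1) (where $\mathrm{Ann}_S(V) = \theta S + XS$), pins down the unfaithful simples as exactly the $V_f = S/(XS + fS)$ for $f \in k[\theta]$ monic irreducible; the isomorphism criterion $V_f \cong V_g \iff f = g$ is immediate since $\mathrm{Ann}_S(V_f) \cap k[\theta] = fk[\theta]$. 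This disposes of part (4).

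For parts (1)--(3) I would argue as follows. Given a faithful simple $S$-module $V$, Lemma \ref{both}(2) upgrades it to a simple $T$-module, and Lemma \ref{irreducible}(2) produces $z \in S \setminus XS$, irreducible in both $S$ and $T$, with $V \cong T/zT$; then Lemma \ref{intersect}(1) gives $S \cap zT = zS$, so the inclusion $S/zS \hookrightarrow T/zT$ shows $S/zS$ is a nonzero $S$-submodule of the simple module $V$, hence equals it — giving (1)(a),(b). For (1)(c): the taxonomy of $\S$\ref{taxonomy} writes $z$, after normalisation, in one of the forms $\mathbf{(A)}$, $\mathbf{(B)}$, $\mathbf{(C)}$; type $\mathbf{(A)}$ (i.e. $z = X$ or $z = \theta$) is excluded because $S/XS$ and $S/\theta S$ are not $\mathcal{X}$-torsion free / not simple-with-faithful-quotient (indeed $S/\theta S \cong R$ is not even simple, and $\theta S + XS$ is the unfaithful maximal ideal), and type $\mathbf{(C)}$ is excluded because if $f \in k[\theta] \setminus k$ then $S/zS$ maps onto $S/(zS + XS) = k[\theta]/(f)$, a nonzero proper $\mathcal{X}$-torsion factor, contradicting simplicity of the $\mathcal{X}$-torsion free module $S/zS$ (here one uses Lemma \ref{intersect}(2) or Remark \ref{extend}). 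So $z$ must be of type $\mathbf{(B)}$. The converse statement (2) runs the same reasoning backwards: for $z$ of type $\mathbf{(B)}$ satisfying (1)(a), the module $S/zS$ is $\mathcal{X}$-torsion free by Lemma \ref{intersect}(2), equals $T/zT$ after localising (again by Lemma \ref{intersect}(1)), and $T/zT$ is simple because $z$ is irreducible in the principal ideal domain $T$; faithfulness then follows from Lemma \ref{both}(1). Part (3) is essentially a translation exercise: (b)$\iff$(c) is Theorem \ref{Jacobson}(2) applied to the PID $T$; (a)$\Rightarrow$(b) follows by localising at $\mathcal{X}$ (an exact functor) using $S/yS \hookrightarrow T/yT$ as an essential extension (Lemma \ref{intersect}); and (b)$\Rightarrow$(a) follows by restricting the $T$-isomorphism to the $S$-submodules $S/zS$ and $S/yS$, which are the $\mathcal{X}$-torsion free parts and hence must correspond — more carefully, $S \cap zT = zS$ shows $S/zS$ is recovered from $T/zT$ as $\{ \overline{t} : tX^n \in S \text{ for some } n,\ \text{and in fact } t \in S \}$, a characterisation preserved by $T$-isomorphism. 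Finally the "moreover" clause: once $S/zS \cong S/yS$ with $z$ of type $\mathbf{(B)}$, the module $S/yS$ is a faithful simple $S$-module by (1), so by (1)(c) applied to $y$ (which lies in $S \setminus XS$ by hypothesis) $y$ is forced to be of type $\mathbf{(B)}$ after normalisation.

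The step I expect to be the main obstacle is the careful verification, in (1)(c) and (2), that the taxonomy labels interact correctly with the $\mathcal{X}$-torsion dichotomy — in particular ruling out type $\mathbf{(C)}$ cleanly requires knowing that $zS + XS = fS + XS$ as ideals of $S$ (so that the factor really is $k[\theta]/(f)$ and nonzero when $f \notin k$), which uses that $z = f + Xs$ and that $XS$ is completely prime; and in (2) one must check that type $\mathbf{(B)}$ genuinely yields a \emph{simple} rather than merely critical module, for which the irreducibility of $z$ in $T$ (Lemma \ref{irreducible}(1)) together with $T$ being a PID is exactly what is needed. A secondary subtlety is keeping track of the normalisation by units of $R \setminus XR$ throughout, so that statements about "type $\mathbf{(B)}$" are invariant under the relevant equivalences; this is bookkeeping rather than a genuine difficulty, but it is where sign-errors and off-by-a-unit slips are most likely.
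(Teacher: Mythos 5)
Your arguments for (1)(a),(b), for (1)(c), and for (4) are fine and run close to the paper's (the paper settles (1)(c) by quoting \cite[Theorem 8.1]{Bavula} via the condition $S=XS+zS$, whereas you exclude types $(\mathbf{A})$ and $(\mathbf{C})$ directly from the taxonomy; that works). The genuine gap is in part (2), and it propagates into your (3)(b)$\Rightarrow$(a). For (2) you must prove that $S/zS$ is a \emph{simple} $S$-module, i.e.\ that the embedding $S/zS\cong (S+zT)/zT\hookrightarrow T/zT$ of Lemma \ref{intersect}(1) is onto. Your justification --- that $S/zS$ is $\mathcal{X}$-torsion free and ``equals $T/zT$ after localising'', with simplicity supplied by irreducibility of $z$ in the PID $T$ --- proves only that the localisation is $T$-simple, and this cannot suffice: a type $(\mathbf{C})$ irreducible $z$ satisfies exactly the same properties (irreducible in $S$ and $T$, $S/zS$ torsion free, $(S/zS)\otimes_S T=T/zT$ simple), yet $S/zS$ is then $1$-critical and \emph{not} simple (Lemma \ref{typee}(2), Theorem \ref{onecrit}(2)). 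So the type $(\mathbf{B})$ hypothesis must be used in an essential way, and your proposal never does. The paper does this by invoking \cite[Theorem 8.1]{Bavula}: for $z=1+Xs$ one has $1=z-Xs$, so $S=XS+zS$ (condition (CO)), and that theorem says precisely that then $T/zT$ is an $\mathcal{X}$-torsion free simple $S$-module, whence $S/zS\cong T/zT$. Alternatively one can argue directly: by induction on $n$, $X^{-n}=zX^{-n}-X^{-(n-1)}\bigl(X^{n}sX^{-n}\bigr)\in zT+S$ (using normality of $X$), so $S+zT=T$ and $S/zS\cong T/zT$ is $S$-simple; but some such argument is indispensable.

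The same missing point undermines your route from (3)(b) to (3)(a). The claim that the $T$-isomorphism $T/zT\cong T/yT$ ``restricts to the $\mathcal{X}$-torsion free parts'' is empty, since the whole of $T/zT$ is $\mathcal{X}$-torsion free as an $S$-module, and your fallback characterisation of $S/zS$ inside $T/zT$ ($tX^n\in S$ for some $n$, ``and in fact $t\in S$'') is either satisfied by every element of $T$ or is not preserved by an abstract $T$-module isomorphism. The correct argument goes through (2): since $z$ has type $(\mathbf{B})$, $S/zS=T/zT$ as $S$-modules, so the $T$-isomorphism shows $T/yT$ is a simple $S$-module; its nonzero $S$-submodule $(S+yT)/yT\cong S/yS$ (Lemma \ref{intersect}(1)) therefore equals it, giving $S/zS\cong S/yS$. (The paper runs (c)$\Rightarrow$(a) this way, again via \cite[Theorem 8.1]{Bavula}, with (b)$\Leftrightarrow$(c) from Theorem \ref{Jacobson}(2) and (a)$\Rightarrow$(b) by localisation, as you have.) Once (2) is repaired, your ``moreover'' clause in (3) and the rest of the proposal go through.
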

\begin{proof}(1):  Let $V$ be a faithful simple $ S-$module. Then $V$ is a simple $T$-module by Lemma \ref{both}. By Lemma \ref{irreducible} there is an element $z \in S$, not equal to $X$ or $\theta$ and irreducible in both $ S$ and $T$, such that $V \cong T/zT$. Since $ S + zT/zT$ is a
non-zero $ S-$submodule of $V$, it must equal $T/zT$, and so it follows using Lemma \ref{intersect} that, as $S$-modules,
$$T/zT \; = \;  S + zT/zT \; \cong \;  S/ S \cap zT  \; = \;  S/z S.$$
This proves (1)$(a)$ and (1)$(b)$.

Suppose now that $z$ is an element of $ S$ satisfying (1)$(b)$. Then $T/zT$ is a simple $T$-module, so $z$ must be irreducible in $T$. Similarly, since $V$ is by hypothesis a simple $ S-$module, $z$ is
irreducible in $ S$ - equivalently, $X$ is not a factor of $z$ in $ S$. To prove (1)$(c)$, we now apply \cite[Theorem 8.1]{Bavula}. This result, specialised to the present setting, states that for an element $z$ of $ S$ which is irreducible in $T$, $T/zT$ is a $\mathcal{X}-$torsion free simple $ S-$module if and only if a condition labelled $\mathrm{(CO)}$ in \cite{Bavula} is satisfied, namely
\begin{equation}\label{CO}  S \; = \; X S + z S.
\end{equation}
Clearly, (\ref{CO}) fails when $z$ is type $(\textbf{A})$ or $(\textbf{C})$, as in these cases $z$ is not congruent to a unit $mod \, X S$. This proves the remainder of (1)$(c)$.

(2) Suppose that $z \in  S$ satisfies (1)$(a)$ and (1)$(c)$. Then (\ref{CO}) holds, so $T/zT$ is a $\mathcal{X}-$torsion free simple $ S-$module by \cite[Theorem 8.1]{Bavula}. This proves (1)$(b)$.

(3) Let $z$ and $y$ be as stated. By part (2), $ S/z S \cong T/zT$ is a faithful simple $ S-$module.

\noindent $(a) \Longrightarrow (b):$ Assume $(a)$. By $(a)$ and part (2),
$$  S/z S \cong T/zT =  S/z S \otimes_{ S} T  \cong   S/y S \otimes_{ S}
T= T/yT,$$
proving $(b)$.

\noindent $(b) \Longrightarrow (c):$ By $(b)$, $y$ is irreducible in $T$,
so $(c)$ follows from Theorem \ref{Jacobson}(2).

\noindent $(c)\Longrightarrow (a):$ Suppose that $z$ and $y$ are similar in $T$. Then $T/zT \cong T/yT$ by Theorem \ref{Jacobson}(2), so in particular $y$ is irreducible in $T$. Thus it is also irreducible in $ S$ by Lemma \ref{irreducible}(1). Hence $ S/z S \cong  S/y S$ by \cite[Theorem 8.1]{Bavula}.

Suppose that $(a)-(c)$ hold. Then $y$ is irreducible in $ S$ by Lemma \ref{irreducible}(1), and it must be type $(\mathbf{B})$ because otherwise $V := S/y S$ is not simple, since in that case $VX \subsetneq V$, contradicting $(1)(a)$.

(4) The first part of (4) follows from Lemma \ref{torsion1}. If $f \neq g$ are monic irreducible polynomials in $k[\theta]$ then $V_{f}$ and $V_{g}$ have different annihilator ideals, so are not isomorphic.
\end{proof}

\begin{remark}\label{gap} There is an obvious missing component in Theorem \ref{Bavula} as a classification of the simple $ S-$modules - namely, it gives no answer to:

\begin{question}\label{whensim} Given type $(\mathbf{B})$ irreducible elements $a$ and $b$ of $ S$, is there a method to determine whether $a$ and $b$ are similar in $T$?
\end{question}

\noindent Beyond the rather cumbersome definition of similarity we note only the observation of Jacobson \cite[page 36]{Jacobson} that, for skew polynomial algebras of automorphism type whose coefficient ring is a division ring, such as $T = R\langle X^{-1}\rangle[\theta; \alpha]$,
$$\textit{ similar polynomials have the same degree in } \theta,$$
since this degree determines the dimension as an $R\langle X^{-1}\rangle-$vector space of the associated cyclic factor module.

\end{remark}

The following easy lemma will be needed in the discussion of 1-critical modules in $\S$\ref{Trep}.

\begin{lemma}\label{split} Let $V$ be a faithful simple $ S-$module and $U$ an unfaithful simple $ S-$module. Then $\mathrm{Ext}^1_{ S}(U,V) = 0.$
\end{lemma}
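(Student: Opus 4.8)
The plan is to compute $\mathrm{Ext}^1_S(U,V)$ directly from a presentation of the unfaithful simple module $U$. By Theorem \ref{Bavula}(4), $U \cong V_f = S/(XS + fS)$ for some monic irreducible $f \in k[\theta]$; in particular $U$ is annihilated by $X$, while $V$ is faithful, hence $\mathcal{X}$-torsion free by Lemma \ref{both}(1). The key structural fact I would exploit is that $X$ acts invertibly on $V$: indeed $V = VX$ since $X$ is normal and $V$ is faithful (as noted in the proof of Lemma \ref{both}(2)), and it acts injectively since $V$ is $\mathcal{X}$-torsion free, so multiplication by $X$ is an $S$-module automorphism of $V$ — more precisely a $\sigma$-semilinear bijection, but certainly bijective as a map of abelian groups and invertible in the relevant sense.

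First I would take a free resolution, or at least the start of one, of $U$. Since $XS + fS$ need not be principal, I would instead argue via the two-step filtration $0 \subset (XS+fS)/fS \hookrightarrow S/fS \twoheadrightarrow S/(XS+fS) = U$, or more cleanly use the long exact sequence coming from $0 \to XS \to S \to S/XS \to 0$ together with $S/XS \cong k[\theta]$. Applying $\mathrm{Hom}_S(-,V)$ to $0 \to XS \to S \to k[\theta] \to 0$ and using that $XS$ is a free $S$-module (so $\mathrm{Ext}^1_S(XS,V)=0$), one gets $\mathrm{Ext}^1_S(k[\theta],V) \cong \mathrm{coker}(\mathrm{Hom}_S(S,V) \to \mathrm{Hom}_S(XS,V))$, and the latter map is ``restriction'', which can be identified with multiplication by $X$ on $V$ — an isomorphism. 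Hence $\mathrm{Ext}^1_S(S/XS, V) = 0$. Then, since $U = V_f$ is a finite-dimensional $k[\theta] = S/XS$-module, one climbs from $\mathrm{Ext}^1_S(S/XS,V)=0$ to $\mathrm{Ext}^1_S(V_f,V)=0$ either by dévissage along a composition series of $V_f$ as an $S/XS$-module (all of whose factors are copies of unfaithful simples $V_g$), or by noting directly that any extension $0 \to V \to E \to V_f \to 0$ has $EX$ mapping onto $V_fX = 0$, so $EX \subseteq V$, forcing $EX = VX = V$ and hence the sequence is $X$-split in a way that produces a splitting.

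Actually the cleanest route, and the one I would write up, is the last observation made self-contained: given $0 \to V \xrightarrow{\iota} E \xrightarrow{\pi} U \to 0$ with $U$ killed by $X$, we have $EX \subseteq \iota(V)$; but $\iota(V)X = \iota(VX) = \iota(V)$, so $EX = \iota(V)$, i.e. $\iota(V) = EX$ is the image of the endomorphism ``$\cdot X$'' of $E$. Combined with $\ker(\cdot X \text{ on } E) \cap \iota(V) = \iota(\ker(\cdot X \text{ on }V)) = 0$, one sees that $\cdot X : E \to E$ has image exactly $\iota(V)$ and restricts to an automorphism of $\iota(V)$; a short diagram chase then yields an $S$-submodule complement to $\iota(V)$ in $E$ (namely the kernel of $\cdot X$ on $E$ is not quite it, so one instead uses that $E \cong \iota(V) \oplus (E/\iota(V))$ follows from the $X$-action being invertible on the sub and zero on the quotient — a standard splitting lemma for modules over $S$ with a normal element acting this way). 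The main obstacle is getting this splitting argument airtight: the subtlety is that $\cdot X$ is only semilinear, so one must be slightly careful that ``invertible action of $X$ on $V$, zero on $U$'' genuinely forces the extension to split as $S$-modules rather than merely as abelian groups; I expect this is handled by checking that the set-theoretic section can be corrected using the $X$-action, exactly as in the proof of Lemma \ref{torsion1} where the $X$-multiplication maps between annihilator layers are analysed. Everything else is routine.
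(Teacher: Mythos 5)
Your proposal is correct, and it runs on the same engine as the paper's proof: $X$ annihilates $U$ but acts bijectively on $V$ (injectively since $V$ is $\mathcal{X}$-torsion free by Lemma \ref{both}(1), surjectively since $VX$ is a nonzero submodule of the simple module $V$) --- equivalently, $V$ carries a $T$-module structure. The packaging, however, is different. The paper argues by contradiction: it takes an indecomposable extension $Y$ of $V$ by $U$, observes that $Y$ is $\mathcal{X}$-torsion free because $V$ is essential in $Y$, chooses $y \in Y \setminus V$ with $0 \neq yX \in V$, and then uses the $T$-action on $V$ together with torsion-freeness of $Y$ to conclude $y = (yX)X^{-1} \in V$, a contradiction. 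Your ``cleanest route'' instead splits an arbitrary extension outright, which is at least as transparent; and your hesitation about semilinearity is unfounded --- the candidate complement you back away from is exactly the right one. Indeed $K := \{e \in E : eX = 0\}$ is an $S$-submodule, because for $s \in S$ we have $sX \in SX = XS$, say $sX = Xs'$, whence $(es)X = (eX)s' = 0$ for $e \in K$; and $E = \iota(V) \oplus K$, since for any $e \in E$ the element $eX$ lies in $EX \subseteq \iota(V) = \iota(V)X$, so $eX = vX$ for some $v \in \iota(V)$ and $e - v \in K$, while $K \cap \iota(V) = 0$ by torsion-freeness of $V$. So no correction of a set-theoretic section and no auxiliary splitting lemma are needed; the diagram chase you sketch closes immediately. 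One small repair to your homological variant: the d\'evissage should run along the presentation $0 \to f k[\theta] \to k[\theta] \to V_f \to 0$ (not along a composition series of the simple module $V_f$), and it closes because $\mathrm{Hom}_S(f k[\theta], V) = 0$ --- as $f k[\theta]$ is $\mathcal{X}$-torsion and $V$ is $\mathcal{X}$-torsion free --- combined with your computation that $\mathrm{Ext}^1_S(S/XS, V) = 0$.
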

\begin{proof} Let $V$ and $U$ be as stated, and suppose that $Y$ is an indecomposable extension of $V$ by $U$. By Theorem \ref{Bavula}(4) there exists a monic irreducible polynomial  $f \in k[\theta]$ such that $U = V_{f} =  S/(X S+ f  S)$. In particular, there exists $y \in Y \setminus V$ such that
$yX \in V.$ Moreover, $yX \neq 0$, since $V$ is $\mathcal{X}-$torsion free by Lemma \ref{both}, and $V$ is by hypothesis an essential submodule of
$Y$, so $Y$ is also $\mathcal{X}-$torsion free.

But the $ S-$action on $V$ extends to a structure as $T-$module,
by Lemma \ref{both}. Therefore
$$ y = (yX)X^{-1} \in V, $$
a contradiction. So no such module $Y$ exists.
\end{proof}

\subsection{Hull similarity classes of 1-critical $ S$-modules}\label{1-crit}

\noindent In this section we state and prove an anologue of Theorem \ref{Bavula} for 1-critical $ S$-modules. As preparation for this we need to study certain 1-critical cyclic modules, as follows:

\begin{lemma}\label{typee}  Let $ S$ and $T$ be as defined in $\S$\ref{notation} and let $z$ be a type $\mathrm{(\bf{C})}$ irreducible element of $ S$.
\begin{enumerate}
\item[(1)] Every simple subfactor $V$ of $ S/z S$ is finite dimensinal;
\item[(2)] $ S/z S$ is a faithful cyclic 1-critical $ S$-module.
\end{enumerate}
\end{lemma}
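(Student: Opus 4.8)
The plan is to prove both parts together, leveraging the localisation $T=S\langle X^{-1}\rangle$ and the facts established about irreducible elements. The key structural input is Lemma~\ref{intersect}: since $z$ is type $\mathrm{(\bf C)}$, we have $S\cap zT=zS$ and $S/zS$ is $\mathcal{X}$-torsion free, so $S/zS$ embeds in $(S/zS)\otimes_S T\cong T/zT$ as an essential $S$-submodule. Since $z\notin XS$, Lemma~\ref{irreducible}(1) gives that $z$ is irreducible in $T$ as well, hence $T/zT$ is a simple $T$-module and therefore $1$-critical as an $S$-module (it has $\mathrm{Kdim}\le 1$ since $T$ has Krull dimension $1$, and any proper $S$-factor would, after inverting $X$, give a proper $T$-factor, forcing $\mathrm{Kdim}=0$). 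So $S/zS$, sitting inside the $1$-critical module $T/zT$, has $\mathrm{Kdim}(S/zS)=1$.

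First I would establish part (1). A simple subfactor $V$ of $S/zS$ is a simple $S$-module; I want to show $\dim_k V<\infty$, equivalently (by Theorem~\ref{Bavula}(4) and Lemma~\ref{both}) that $V$ is $\mathcal{X}$-torsion, i.e.\ unfaithful. Suppose for contradiction $V$ is faithful. Then $V$ is $\mathcal{X}$-torsion free and extends to a simple $T$-module. Present $V=N/N'$ with $N'\subset N\subseteq S/zS$. Applying $-\otimes_S T$ (exact, since $T$ is a flat $S$-module as a localisation) gives $V\cong V\otimes_S T=(N\otimes_S T)/(N'\otimes_S T)$ as a subfactor of $T/zT$. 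But $T/zT$ is simple, so this forces $N\otimes_S T=T/zT$ and $N'\otimes_S T=0$, i.e.\ $N'$ is $\mathcal{X}$-torsion; since $S/zS$ is $\mathcal{X}$-torsion free, $N'=0$, so $V\cong N\hookrightarrow S/zS$. Thus $S/zS$ would contain a faithful simple submodule $V\cong S/yS$ for some type $\mathrm{(\bf B)}$ irreducible $y$ (Theorem~\ref{Bavula}). But then modulo $XS$ we get a nonzero $S/XS\cong k[\theta]$-module map $k[\theta]/\bar f k[\theta]\cdots$—more cleanly: reducing mod $XS$, the image of $z$ is $f\in k[\theta]\setminus k$, while the image of $y$ is a unit, and a submodule of $S/zS$ isomorphic to $S/yS$ would, after $\bmod\,XS$ and using $\mathcal{X}$-torsion-freeness plus the Artin--Rees argument of Lemma~\ref{torsion1}, force $1\in fk[\theta]$, a contradiction. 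This rules out faithful simple subfactors, so every simple subfactor is unfaithful, hence finite dimensional.

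For part (2), faithfulness of $S/zS$: since $S/zS$ is $\mathcal{X}$-torsion free, Lemma~\ref{both}(1)—or rather the argument in its proof—applies: if $P=\mathrm{ann}_S(S/zS)\ne 0$ then $P\cap R=0$ (as $S/zS$ is $R$-torsion free, being $\mathcal{X}$-torsion free over the DVR $R$), so $PT$ is a proper ideal of $T$; but $PT$ annihilates $T/zT$, which is faithful over $T$ since $T$ is a domain and $zT$ contains no nonzero two-sided ideal (a simple module over the domain $T$ is faithful, as $\mathrm{ann}(T/zT)$ is a two-sided ideal contained in $zT\ne T$ and $T$ has no nonzero proper two-sided ideals killing a simple—indeed $T\langle\theta^{-1}\rangle$ is simple, forcing any nonzero such ideal to contain a power of $\theta$, which does not lie in $zT$ for $z$ of type $\mathrm{(\bf C)}$, $z\ne\theta$), contradiction. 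So $P=0$. Finally, $1$-criticality: we already have $\mathrm{Kdim}(S/zS)=1$; for any proper factor $S/I$ with $zS\subsetneq I$, localising gives $I T=T$ (since $T/zT$ is simple and $IT\ne 0$), so $S/I$ is $\mathcal{X}$-torsion, hence $R$-torsion, hence a module over $S/X^nS$ for some $n$, which has Krull dimension $0$; thus every proper factor has Krull dimension $0<1$, and $S/zS$ is $1$-critical.

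The main obstacle I anticipate is the argument in part (1) ruling out a faithful simple \emph{sub}module of $S/zS$: the clean contradiction comes from comparing behaviour modulo $XS$, where $z$ reduces to a non-unit $f$ and a type $\mathrm{(\bf B)}$ element reduces to a unit, but making this rigorous requires the Artin--Rees machinery for the normal element $X$ exactly as deployed in Lemma~\ref{torsion1}, together with $\mathcal{X}$-torsion-freeness (Lemma~\ref{intersect}(2)) to transfer the containment faithfully to $S/(XS+zS)\cong k[\theta]/fk[\theta]$. Getting the bookkeeping right—that an isomorphic copy of $S/yS$ inside $S/zS$ would produce a nonzero $k[\theta]$-homomorphism into a finite-dimensional quotient while $S/yS$ itself is infinite-dimensional and $\mathcal{X}$-torsion free, a contradiction—is the delicate point; everything else is a direct application of Lemmas~\ref{irreducible}, \ref{intersect} and \ref{both} together with flatness of $S\to T$.
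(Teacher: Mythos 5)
Your setup (torsion-freeness of $S/zS$ via Lemma \ref{intersect}, simplicity of $T/zT$, and the localisation argument reducing a faithful simple subfactor to a faithful simple submodule) matches the paper, but the two places where you defer the hard work are exactly where the proposal breaks. First, your $1$-criticality argument in (2) rests on the claim that a proper factor $S/I$, being $\mathcal{X}$-torsion and hence a module over $S/X^nS$, has Krull dimension $0$. That is false: $S/XS \cong k[\theta]$ has Krull dimension $1$, so cyclic $\mathcal{X}$-torsion $S$-modules can be $1$-critical --- indeed $S/XS$ is one of the unfaithful $1$-criticals in Theorem \ref{onecrit}(1)(a). Ruling out Krull dimension $1$ for proper factors of $W := S/zS$ is precisely the content of the lemma, and the paper does it by an element-level argument you do not supply: for $0 \neq B \subseteq W$ and $m = w + B$ a nonzero element of $W/B$ killed by $X$, one shows $\mathrm{Ann}_S(w) \nsubseteq SX$ (using torsion-freeness of $W$ and $\bigcap_n (SX)^n = 0$), so $mS$ is killed by $XS$ plus a nonzero polynomial in $\theta$ and is therefore finite dimensional; iterating, Noetherianness of $W$ forces $\dim_k(W/B) < \infty$. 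This finite-dimensionality of all proper quotients is the core fact, and it appears nowhere in your proposal.

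Second, your exclusion of a faithful simple submodule $N \cong S/yS$ of $W$ --- which you yourself flag as the delicate point --- is sketched via ``reducing mod $XS$'' to force $1 \in fk[\theta]$, and that mechanism cannot work as stated: a faithful simple submodule is $X$-divisible, so $N = NX \subseteq WX$ and its image in $W/WX \cong k[\theta]/fk[\theta]$ is zero; no nonzero $k[\theta]$-homomorphism, and hence no contradiction, arises this way. The paper instead deduces $\mathrm{socle}(W) = 0$ from the finite-dimensionality of proper quotients just described combined with Lemma \ref{split}: if $N$ were a (necessarily infinite-dimensional, faithful) simple submodule, then $W/N$ would be nonzero finite dimensional, a simple $A/N$ would split off by Lemma \ref{split}, and the resulting finite-dimensional direct summand of $A$ inside $W$ is killed by $X$, contradicting $\mathcal{X}$-torsion-freeness. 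Without that route (or some substitute), part (1) is not proved. A smaller gap: in your faithfulness argument you assert $\theta^n \notin zT$ for $z$ of type $(\mathbf{C})$; this needs the short computation the paper performs to show $\theta^n = (f + Xs)\gamma$ is impossible --- note $f$ may itself be a power of $\theta$, so reduction mod $X$ alone does not settle it.
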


\begin{proof}(1)   Write $W$ for $ S/z S$. Since $z$ is irreducible in $T$ by Lemma \ref{irreducible}(1), $T/zT = W \otimes_{ S} T$ is a simple $T$-module. Since $z$ has type  $\mathrm{(\bf{C})}$, Lemma \ref{intersect} shows
that
\begin{equation}\label{torfree1} W \textit{ is }\mathcal{X}\textit{-torsion free}.
\end{equation}
Let $B$ be a non-zero submodule of $W$, and define $M := W/B$. Since $T$ is a flat left $ S-$module, the short exact sequence $ 0 \longrightarrow B \longrightarrow W \longrightarrow M \longrightarrow 0 $ yields the exact sequence of $T$-modules
\begin{equation}\label{exact} 0 \longrightarrow B \otimes_{ S} T
\longrightarrow W \otimes_{ S} T \longrightarrow M \otimes_{ S} T \longrightarrow 0.
\end{equation}
By (\ref{torfree1}), $B \otimes_{ S} T \neq 0$, so that $B \otimes_{ S} T = W \otimes_{ S} T$ since $W \otimes_{ S} T$ is simple. Therefore, by exactness of (\ref{exact}), $M \otimes_{ S} T = 0$; in other words, $M$ is a finitely generated $\mathcal{X}$-torsion $ S-$module. Since $M$ is finitely generated and $X$ is a normal element of $ S$, there is a positive integer
$\ell$ such that
\begin{equation}\label{kill} MX^{\ell} = 0.
\end{equation}
In particular, if $M$ is non-zero we can choose $0 \neq m \in M$ with $mX
= 0$, say $m = w + B$ for some $w \in W$. Let $I := \mathrm{Ann}_{ S}(w)$, so $I\neq 0$ since $\mathrm{Kdim}(W) \leq 1$ by \cite[Lemma 6.3.9]{McCR}. We claim that
\begin{equation}\label{big} I \nsubseteq  SX.
\end{equation}
For, choose $0 \neq \beta \in I$ and write $\beta$ as $\tau X^r$ for some
$r \geq 0$, with $\tau \notin  SX$.  (We can find $\tau $ since $\bigcap_n ( SX)^n = 0$.) If $w \tau \neq 0$ then the equation
$w \tau X^r = 0$ shows that there exists a non-zero $\mathcal{X}-$torsion element of $W$, contradicting (\ref{torfree1}). This proves (\ref{big}). Thus
$$ m(I + X S) = 0,$$
and hence, by (\ref{big}), $m S$ is a non-zero finite dimensional submodule of $M$. Repeating this argument in the factor of $W$ by $B_1 := B + w S$, and so on, the fact that $W$ is noetherian forces
\begin{equation}\label{size1} \mathrm{dim}_k (W/B) \, < \, \infty.
\end{equation}
We claim next that
\begin{equation}\label{socle} \mathrm{socle}(W) = 0.
\end{equation}
To see this, suppose that $U$ is a simple submodule of $W$. By (\ref{torfree1}) and Theorem \ref{Bavula}(4), $\mathrm{dim}_k (U) = \infty$. Notice that, by Theorem \ref{Bavula}(1),(2) and (3) and the fact that $z$ has
type $(\mathbf{C})$, $W$ itself cannot be simple, so that $U\subsetneq W$. We can therefore apply (\ref{size1}) with $B = U$ to conclude that $W/U$ contains a submodule $A/U$ with $\mathrm{dim}_k(A/U) < \infty$. However Lemma \ref{split} now implies that $A$ splits,
$$ A \quad \cong \quad U \oplus C, $$
where $C$ is a finite-dimensional simple submodule. Since $CX = 0$, this contradicts (\ref{torfree1}). This proves (\ref{socle}).

Now let $B \subseteq A$ be submodules of $W$ with $A/B$ a simple module. Then $B$ must be non-zero by (\ref{socle}).  Therefore (\ref{size1}) implies that $\mathrm{dim}_k (A/B) < \infty$, proving part (1).

\medskip

\noindent (2) It is immediate from  (\ref{socle}) and (\ref{size1}) that $ S/z S$ is a cyclic 1-critical $ S-$module. If $ S/z S$ is not faithful, then it contains a nonzero submodule killed by either $X S$ or by $\theta S$, since these are the only height one primes of $ S$. From (\ref{torfree1}) it is clear that the first of these possibilities is not possible. Suppose that $ S/z S$ has a nonzero submodule killed by $\theta S$. Since $ S/z S$ is critical, it is uniform. Therefore, since $\theta S$ has the Artin-Rees property by \cite[Proposition 4.2.6]{McCR}, there exists $n \geq 1$ such that
\begin{equation}\label{kill} ( S/z S)\theta^n \; = \; 0.
\end{equation}
Let $z = f +Xs$, where $s \in  S$ and $0 \neq f \in k[\theta]$ is monic, with $f \neq \theta$. Then (\ref{kill}) implies that
$$ \theta^n \; = \;  (f + Xs)\gamma $$
for some $\gamma \in  S$, and a quick calculation shows that this is impossible. Hence $ S/z S$ is faithful, as required.
 \end{proof}

Here is the promised 1-critical analogue of Theorem \ref{Bavula}. We repeat Lemma \ref{typee}(2) as part (2) of the theorem, to emphasise it in conjunction with part (1).

\begin{theorem}\label{onecrit} Let $ S$ and $T$ be as defined in $\S$\ref{intro}.

\begin{enumerate}
\item[(1)] Let $M$ be a finitely generated 1-critical  $ S$-module.
\begin{enumerate}
\item[(a)] If $M$ is unfaithful, then $M \cong  S/X S$ or $M \cong  S/\theta  S$.

\item[(b)] If $M$ is faithful, then $M$ is hull similar to $ S/z S$ for a type $\mathrm{(\bf{C})}$ irreducible element $z$ of $ S$. More precisely, $ S/z S$ is isomorphic to a submodule of $M$.
\end{enumerate}

\item[(2)] Let $z$ be a type $\mathrm{(\bf{C})}$ irreducible element of $ S$. Then $ S/z S$ is a faithful cyclic 1-critical $ S$-module.

\item[(3)] Let $z$ and $w$ be irreducible elements of $ S$ with $z$ of type $\mathrm{(\bf {C})}$. Then
the following are equivalent:
\begin{enumerate}
\item[(a)] $ S/z S$ and $ S/w S$ are hull-similar.
\item[(b)] $T/zT$ and $T/wT$ are isomorphic simple $T$-modules.
\item[(c)]  $z$ and $w$ are similar in $T$.
\end{enumerate}
When these equivalent statements hold $w$ is of type $\mathrm{(\bf{C})}$.
\end{enumerate}
\end{theorem}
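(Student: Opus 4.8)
The plan is to dispatch the statement in the order (2), (1), (3). Part (2) requires nothing, being a verbatim restatement of Lemma~\ref{typee}(2). For (1) the key elementary observation, used repeatedly, is this: if $N$ is a $1$-critical $S$-module and $c\in S$ is a normal element with $Nc^{n}=0$ for some $n\geq 1$, then already $Nc=0$ --- otherwise $Nc\cong N/\mathrm{Ann}_{N}(c)$ would be simultaneously a nonzero submodule and a proper factor of the $1$-critical $N$ (note $\mathrm{Ann}_{N}(c)\neq 0$, since $c^{n}$ cannot act injectively on $N\neq 0$). For orientation it is also worth noting that a short computation of the normal elements of $T=Q[\theta;\alpha]$ (using $|\alpha|=\infty$ and comparing $\theta$-coefficients on the two sides of $qc=cq'$) shows that the only nonzero proper two-sided ideals of $T$ are the ideals $\theta^{j}T$, $j\geq 1$, and hence that $XS$ and $\theta S$ are the only height-one primes of $S$.

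Now let $M$ be finitely generated $1$-critical. If $M$ has nonzero $\mathcal{X}$-torsion, say $vX^{\ell}=0$ with $v\neq 0$, then by normality of $X$ and the observation $vS$ is a nonzero submodule killed by $X$, hence essential in the uniform $M$; so $E_{S}(M)=E_{S}(vS)$ is $\mathcal{X}$-torsion by Lemma~\ref{torsion1}, $M$ is killed by a power of $X$, and the observation forces $MX=0$. Then $M$ is a finitely generated $1$-critical module over $S/XS\cong k[\theta]$, necessarily faithful over $k[\theta]$ (a non-faithful one would be finite-dimensional, of Krull dimension $0$), hence $M\cong k[\theta]=S/XS$. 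In particular every faithful $M$ is $\mathcal{X}$-torsion-free, so from now on assume $M$ is $\mathcal{X}$-torsion-free. Then $\widetilde{M}:=M\otimes_{S}T$ is a finitely generated $T$-module, uniform since $M$ is essential in it over $S$, with $\mathrm{Kdim}_{T}(\widetilde{M})\leq 1$. The case $\mathrm{Kdim}_{T}(\widetilde{M})=1$ is impossible: $\widetilde{M}$ would then contain a $1$-critical $T$-submodule, which is $\cong T$ (as $T$ is a left and right principal ideal domain of Krull dimension one) and essential, so $E_{T}(\widetilde{M})=\mathrm{Frac}(S)$ and $M$ embeds in $\mathrm{Frac}(S)$, forcing $M$ to be isomorphic to a right ideal of $S$ and hence $2$-critical. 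So $\widetilde{M}$ is Artinian over $T$; put $W=\mathrm{socle}_{T}(\widetilde{M})$, a simple $T$-module, and pick $0\neq m\in M\cap W$ (nonzero by essentiality of $M$ in $\widetilde{M}$). Then $mT=W$, so $\mathrm{Ann}_{T}(m)$ is a maximal right ideal of $T$; because $X$ is a unit of $T$ and $s\mapsto XsX^{-1}$ is an automorphism of $S$ preserving $S\setminus XS$, we may choose its generator inside $S\setminus XS$, say $\mathrm{Ann}_{T}(m)=wT$ with $w\in S\setminus XS$, and then $\mathrm{Ann}_{S}(m)=wT\cap S=wS$ by Lemma~\ref{intersect}(1). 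Thus $mS\cong S/wS$ is a submodule of $M$, and $w$ is irreducible in $T$ (as $T/wT\cong W$ is simple) and hence in $S$ (Lemma~\ref{irreducible}(1)).

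It remains to identify the type of $w$. If $w$ is type \textbf{(B)}, then $W=T/wT=S/wS$ is already simple over $S$ (Theorem~\ref{Bavula}(2)), so $mS=W$ is a simple submodule of $M$; as $M$ is not simple, $M/mS$ is a proper factor, finitely generated of Krull dimension $0$, hence of finite length, so $\mathrm{Kdim}(M)=0$ --- absurd. If $w$ is type \textbf{(A)}, then (since $w\notin XS$) $w$ normalises to $\theta$, so $W=T/\theta T$ is killed by $\theta$; hence $mS\cong S/\theta S$ is killed by $\theta$ and, being essential in $M$, yields $E_{S}(M)=E_{S}(S/\theta S)$, which is $\theta S$-torsion because $\theta$ is normal (so $\theta S$ has the Artin--Rees property, and one argues exactly as in Lemma~\ref{torsion1}); therefore $M$ is killed by a power of $\theta$, the observation gives $\theta M=0$, and $M$ is a finitely generated $1$-critical module over $S/\theta S\cong R$ which must be faithful over $R$ (else finite-dimensional, Krull dimension $0$), so $M\cong R=S/\theta S$. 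Finally, if $w$ is type \textbf{(C)}, then $S/wS$ is faithful by Lemma~\ref{typee}(2), so $M\supseteq mS\cong S/wS$ is faithful and $M$ is hull-similar to $S/wS$ via the common submodule $mS$. Collating: an unfaithful $M$ falls into the $\mathcal{X}$-torsion case or the type \textbf{(A)} case, giving $M\cong S/XS$ or $S/\theta S$, which is (1)(a); a faithful $M$ falls into the type \textbf{(C)} case, giving (1)(b) together with the precise statement $S/wS\hookrightarrow M$ (take $z:=w$).

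For (3): (b)$\Leftrightarrow$(c) is Theorem~\ref{Jacobson}(2), once one notes that (b) forces $w\notin XS$ (else $T/wT=0$), so $w$ is irreducible in $T$. For (a)$\Leftrightarrow$(b): since $z$ is type \textbf{(C)}, $S/zS$ is $\mathcal{X}$-torsion-free with $(S/zS)\otimes_{S}T=T/zT$, so $E_{S}(S/zS)=E_{T}(T/zT)$ by Proposition~\ref{samehull}(1); if (a) holds then $E_{S}(S/wS)\cong E_{S}(S/zS)$ is $\mathcal{X}$-torsion-free, so $w\notin XS$ (otherwise $E_{S}(S/wS)$ is $\mathcal{X}$-torsion by Lemma~\ref{torsion1}) and likewise $E_{S}(S/wS)=E_{T}(T/wT)$, and then (a) reads $E_{T}(T/zT)\cong E_{T}(T/wT)$, equivalent to $T/zT\cong T/wT$ because the socle of the injective hull of a simple module is that module. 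When (a)--(c) hold, $w$ is type \textbf{(C)}: $w\notin XS$ rules out type \textbf{(A)} (with $w\neq\theta$, since a short calculation of the sort in the proof of Lemma~\ref{typee}(2) shows $T/zT$ is not killed by $\theta$ when $z$ is type \textbf{(C)}), while $w$ of type \textbf{(B)} would make $E_{S}(S/wS)$ have nonzero socle, contradicting~(\ref{socle}). The genuinely delicate point is the ``more precisely'' clause of (1)(b): realising $S/wS$ literally as a submodule of $M$ rather than merely up to hull similarity, which is exactly why the argument tracks that $\mathrm{Ann}_{S}(m)$ is principal and generated inside $S\setminus XS$, through Lemma~\ref{intersect}(1) and the conjugation-by-$X$ device. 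A secondary point requiring care is pinning down which ideal $\mathrm{Ann}_{S}(M)$ can be in (1)(a), which the argument handles via $\mathrm{socle}_{T}(\widetilde{M})$ and the ``$Nc^{n}=0\Rightarrow Nc=0$'' observation rather than by appealing to primeness of annihilators of critical modules.
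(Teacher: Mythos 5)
Your argument is correct, and for part (1) it follows a genuinely different route from the paper's. The paper works entirely inside $S$: for (1)(a) it invokes the fact that $XS$ and $\theta S$ are the only height-one primes and then forces $MX=0$ (resp.\ $M\theta=0$) by the trick of producing $\widehat{f}\in k[\theta]$ with $mSX\widehat{f}=0$, so that $mSX$ would be killed by the co-Artinian ideal $XS+\widehat{f}S$; for (1)(b) it chooses $m\in M$ and an irreducible $z\in S\setminus XS$ with $mz=0$ by factoring an annihilating element into irreducibles (Lemma \ref{irred}), rules out types $(\mathbf{A})$ and $(\mathbf{B})$, and then gets $mS\cong S/zS$ because $S/zS$ is $1$-critical and $\mathrm{socle}(M)=0$. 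You instead localise: after disposing of the $\mathcal{X}$-torsion case, you show $M\otimes_S T$ has $T$-Krull dimension $0$ (the dimension-$1$ alternative being excluded by embedding $M$ into $\mathrm{Frac}(S)$ via a common denominator, which would make $M$ isomorphic to a right ideal of $S$), take the simple $T$-socle, and pull back $\mathrm{Ann}_T(m)=wT$ with $w\in S\setminus XS$ through Lemma \ref{intersect}(1) to get $\mathrm{Ann}_S(m)=wS$ and hence the embedding $S/wS\hookrightarrow M$ at once; the type analysis then yields (1)(a) for the $\theta$-case and (1)(b) from the same mechanism, with your ``$Nc^n=0\Rightarrow Nc=0$'' observation for normal $c$ replacing the paper's co-Artinian-ideal argument. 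What each buys: your version is more uniform, avoids the height-one-prime classification and the $f,\widehat{f}$ manipulation, and makes the ``more precisely'' clause of (1)(b) fall out of the principality of $\mathrm{Ann}_T(m)$; the paper's version stays inside $S$ and does not need the structure of finitely generated critical $T$-modules or the common-denominator embedding into $\mathrm{Frac}(S)$. Parts (2) and (3) of your write-up essentially coincide with the paper (citation of Lemma \ref{typee}(2); Theorem \ref{Jacobson}(2) plus Proposition \ref{samehull}(1)), the only cosmetic difference being that you rule out type $(\mathbf{B})$ via the vanishing socle of $E_S(S/zS)$ rather than via faithfulness and Theorem \ref{Bavula}(2). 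Two small points to tidy: $Nc$ is isomorphic to a twist of $N/\mathrm{Ann}_N(c)$ by the automorphism implementing normality of $c$, not to $N/\mathrm{Ann}_N(c)$ itself --- harmless, since twisting preserves Krull dimension and criticality, but worth saying; and in the type $(\mathbf{A})$ case you should write $M\theta=0$ rather than $\theta M=0$, as these are right modules.
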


\begin{proof}(1)$(a)$ Suppose that $M$ is unfaithful. The only height one primes of $ S$ are $X S$ and $\theta S$, so $M$ has a non-zero submodule $A$ with $A(X S) = 0$ or $A(\theta  S) = 0$. Only one of these possibilities can occur for $M$, as otherwise $M$, being uniform, would contain a non-zero submodule killed by $X S + \theta S$, contradicting the fact that $M$ is 1-critical.

So let us suppose that there exists $0 \neq A \subseteq M$ with $A(X S) = 0$; so $\mathrm{Ann}_{ S}(A) = X S$ since $A$ is 1-critical. Since $X S$ satisfies the Artin-Rees property by \cite[Proposition 4.2.6]{McCR}, there exists $t \geq 1$ such that $MX^t = 0 \neq MX^{t-1}$. We claim that
\begin{equation}\label{club} t \; = \; 1.
\end{equation}
Suppose that (\ref{club}) is false. Then we can choose $m \in M$ with
$$ mX^2 \; = 0 \; \neq mX; $$
moreover, since $m S$ is a non-zero submodule of $M$, it is 1-critical and hull similar to $ S/X S$.

Now $m S/m SX$ is Artinian and annihilated by $X S$. Hence there exists $0 \neq f \in k[\theta]$ such that $m Sf \subseteq m SX$. That is, for some non-zero element $\widehat{f}$ of $k[\theta]$,
\begin{equation}\label{park} m SfX \; = \; 0 \; = m SX\widehat{f},
\end{equation}
where the second equality follows from the relation $\theta X = qX\theta$. But now $m SX$ is a non-zero submodule of $M$ which is killed by the co-Artinian ideal $X S +\widehat{f} S$ of $ S$, contradicting the fact that $M$ is 1-critical.

Thus (\ref{club}) is proved. Hence $M$ is a finitely generated torsion-free 1-critical (and so torsion-free and uniform) $k[\theta]$-module. So it is isomorphic to a non-zero ideal of $k[\theta]$, that is to $k[\theta]$ itself.

If $M$ on the other hand contains a non-zero element annihilated by $\theta$, then the argument is similar.

\medskip

(1)$(b)$ Suppose that $M$ is a finitely generated faithful 1-critical $ S-$module. We show first that
\begin{equation}\label{torfree} M \textit{ is } \mathcal{X}-\textit{torsion free.}
\end{equation}
If (\ref{torfree}) is false then there exists $0 \neq m \in M$ with $mX =
0$. But $M$, being critical, is uniform, and is therefore an essential extension of $m S$. Since $(m S)(X S) = 0$ and
the invertible ideal $X S$ has the Artin-Rees property \cite[Proposition 4.2.6]{McCR}, $MX^{\ell}= 0$ for some positive integer $\ell$.
This contradicts the fact that $M$ is faithful, so (\ref{torfree}) is proved.

Now choose $0 \neq m \in M$ and $z \in  S \setminus X S$ with $mz = 0$ and $z$ irreducible in $ S$ and $T$. Note that
we can do this thanks to (\ref{torfree}) coupled with Lemmas \ref{irred} and \ref{irreducible}(1). Moreover $z$ is not type $(\mathbf{A})$ since if it were then $mX = 0$ or $m\theta = 0$, and in both cases the Artin-Rees property applied to the essential extension $m S \subseteq M$ implies that $M$ is unfaithful, a contradiction. Nor is $z$ type $(\mathbf{B})$,
since if it were then $ S/z S$ would be simple by Theorem \ref{Bavula}(2), contradicting the 1-criticality of $M$. Therefore
\begin{equation}\label{type} z \textit{ has type }(\textbf{C}).
\end{equation}
Now $z S \subseteq \mathrm{Ann}_{ S}(m)$, so that $m S$ is a factor of $ S/z S$. But $ S/z S$ is 1-critical by (\ref{type}) and Lemma \ref{typee}(2), and the socle of $M$ is $\{0\}$ since $M$ is 1-critical by hypothesis. So we must have
\begin{equation}\label{inside}   S/z S \cong m S \subseteq M.
\end{equation}
Thus $E_{ S}( S/z S) \cong E_{ S}(M)$ since $M$ is uniform, so $M$ is hull-similar to $ S/z S$ as claimed.

\medskip
(2) This is Lemma \ref{typee}(2).

\medskip

(3) Statements $(b)$ and $(c)$ are equivalent by Theorem \ref{Jacobson}(2).

\noindent $(a)\Longleftrightarrow(b)$: Note first that $ S/z S$ is $\mathcal{X}-$torsion free by Lemma \ref{intersect}(2). Now we have the following chain of equivalences,
 \begin{align*}  S/w S \textrm{ is hull-similar to } S/z S &\Longleftrightarrow& E_{ S}( S/w S) \; &\cong\; E_{ S}( S/z S) \textrm{ as } S-\textrm{modules}\\
&\Longleftrightarrow& E_T( S/w S \otimes_{ S} T) \; &\cong\;  E_T( S/z S \otimes_{ S} T) \textrm{ as } T-\textrm{modules}\\
&\Longleftrightarrow& E_T(T/wT) \; &\cong\;  E_T(T/zT) \textrm{ as } T\textrm{-modules}\\
&\Longleftrightarrow& T/wT \; &\cong\; T/zT \textrm{ as } T\textrm{-modules},
\end{align*}
where the first equivalence is by definition of hull-similarity, the second by Proposition \ref{samehull}(1), and the last equivalence holds because $T/wT$ and $T/zT$ are simple modules due to $T$ being a principal right ideal domain.

Finally, if $(a)$ holds, then $ S/w S$ is faithful and has Krull dimension 1 by part (2) of the theorem applied to $ S/z S$. So $w$ cannot be type $\mathrm{(\bf{A})}$ or type $\mathrm{(\bf{B})}$ by statement (1) and Theorem \ref{Bavula}(2). Hence $w$ must be type $\mathrm{(\bf{C})}$.
\end{proof}

\medskip

In view of the above result we introduce the obvious terminology: a finitely generated critical $ S$-module is \emph{type $\mathrm{(\bf{C})}$ 1-critical} if it is hull-similar to $ S/z S$ where $z$ is a type $\mathrm{(\bf{C})}$ irreducible in $ S$.

\bigskip

\section{$(\diamond)$ for $ S$ via the representation theory of $T$.}\label{Trep}

\subsection{Injective hulls of type $(\mathbf{B})$ $T-$modules}\label{Binj}In this subsection we make use of the analysis of $\S \S$ \ref{list} and \ref{1-crit}  to study injective hulls and extensions of simple $ S-$modules.

We begin by examining the injective hull of a faithful simple $ S-$module $V$, bearing in mind that $V$ admits a structure as $T-$module by Lemma \ref{both}, and its $ S-$ and $T-$hulls are one and the same by Proposition \ref{samehull}(2).

\begin{proposition}\label{torsion} Let $ S$ and $T$ be as in $\S$\ref{notation},
and let $V$ be a faithful simple $ S-$module. Let $\mathcal{C} := \{1 + Xs : s \in  S\}$.
\begin{enumerate}
\item[(1)] $\mathcal{C}$ is an $\alpha$-invariant Ore set in $ S$.
\item[(2)] The set $W$ of $\mathcal{C}$-torsion elements of $E_T(V)$  is a non-zero $T$-submodule of $E_T(V)$.
\item[(3)] Let $c \in \mathcal{C}$, and let $c = c_1 \cdots c_t$ be a factorisation of $c$ as a product of irreducible elements of $ S$. Then $c_i$ has type $\mathrm{{\bf (B)}}$ for all $i$.
\item[(4)] As an $ S-$module, $W$ is locally Artinian, with all composition factors type {\bf (B)}.
\item[(5)] If $E_T(V)/W$ is nonzero  then it has the following properties.
\begin{enumerate}
\item[(i)]  $E_T(V)/W$ is a $\mathcal{C}$-torsion-free injective $T$-module.
\item[(ii)] As $ S-$module $E_T(V)/W$ is injective, a direct sum of injective hulls of 1-critical type {\bf (C)} $ S-$modules.
\item[(iii)] The decomposition $(ii)$ is also a decomposition of $E_T(V)/W$ as a direct sum of injective hulls of simple $T$-modules.
\end{enumerate}
\end{enumerate}
\end{proposition}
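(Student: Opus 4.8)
The plan is to prove the five statements in the order listed, bootstrapping from the earlier ones; throughout I use that $XS=SX$ is a completely prime ideal with the Artin--Rees property, that $S$ and $T$ are Noetherian domains, that $T$ is a principal left and right ideal domain and hence hereditary, that $E_T(V)$ satisfies $(\diamond)$ over $T$ and is therefore a torsion $T$-module, and the classifications in Theorems \ref{Bavula} and \ref{onecrit}.

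For (1) and (3): multiplicative closure and $\alpha$-invariance of $\mathcal{C}$ follow from $(1+Xs)(1+Xt)=1+X\bigl(s+t+X(X^{-1}sX)t\bigr)$ and $\bar\alpha(1+Xs)=1+X\bigl(q\,\bar\alpha(s)\bigr)$, where conjugation by $X$ and the automorphism $\bar\alpha$ of $S$ fixing $\theta$ and extending $\alpha$ both preserve $S$ because $q\in R^\times$. For (3), if $c\in\mathcal{C}$ and $c=c_1\cdots c_t$ with the $c_i$ irreducible in $S$, then no $c_i$ lies in $XS$ (which is completely prime and avoids $c$), the images $\bar c_i\in S/XS\cong k[\theta]$ multiply to $\bar c=1$ hence lie in $k^\times$, and so after absorbing these scalar units each $c_i=1+Xs_i$ with $s_i\notin R$ (as $c_i$ is a non-unit), i.e.\ $c_i$ is type $\textbf{(B)}$. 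For the right Ore condition in (1), take $u\in S$ and a non-unit $c\in\mathcal{C}$, and write $c=c_1\cdots c_t$ as in (3) (such a factorization exists, $S$ being a Noetherian domain); by Lemma \ref{intersect}(2), $S/cS$ is $\mathcal{X}$-torsion-free, so it embeds into $S/cS\otimes_S T=T/cT$, which has the finite filtration with factors $c_1\cdots c_{i-1}T/c_1\cdots c_iT\cong T/c_iT$. Each $T/c_iT$ is a simple $T$-module which, as an $S$-module, is isomorphic to $S/c_iS$ (Theorem \ref{Bavula}(2)) and is $\mathcal{C}$-torsion (a non-zero element of it has $T$-annihilator $z'T$ for a type $\textbf{(B)}$ irreducible $z'$ by Theorem \ref{Bavula}(3) and Lemma \ref{irreducible}, hence is killed by a scalar multiple of $z'$ lying in $\mathcal{C}$); since each $T/c_iT$ is also simple over $S$, the filtration shows $T/cT$, and hence $S/cS$, has finite length over $S$. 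Because an extension of $\mathcal{C}$-torsion modules is $\mathcal{C}$-torsion --- an elementwise check, with no Ore hypothesis --- $T/cT$ and thus $S/cS$ are $\mathcal{C}$-torsion $S$-modules, so $\overline{u}\in S/cS$ is killed by some $c'\in\mathcal{C}$, i.e.\ $uc'\in cS$; the left condition is symmetric.

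For (2): $W$ is an $S$-submodule by (1) and the standard torsion-submodule argument, and $ec=0$ with $c=1+Xs\in\mathcal{C}$ gives $(eX^{-1})(XcX^{-1})=ecX^{-1}=0$ with $XcX^{-1}=1+X(XsX^{-1})\in\mathcal{C}$, so $W$ is closed under $X^{-1}$ and is a $T$-submodule; it is non-zero since, writing $V\cong S/zS$ with $z$ type $\textbf{(B)}$ (Theorem \ref{Bavula}(1)), the image of $1$ in $V\subseteq E_T(V)$ is killed by $z\in\mathcal{C}$. For (4): if $W_0=\sum_{j=1}^m w_jS\subseteq W$ is finitely generated, each $w_j$ is killed by some element of $\mathcal{C}$, so $w_jS$ is a quotient of some $S/c_jS$, which by (1) has finite length with all composition factors simple of type $\textbf{(B)}$; hence $W_0$ has finite length with type $\textbf{(B)}$ composition factors, so $W$ is locally Artinian with the asserted composition factors.

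The heart of the matter is (5). Since $T$ is hereditary, $E_T(V)/W$ is an injective $T$-module; it is $\mathcal{C}$-torsion-free because $W$ is the full $\mathcal{C}$-torsion submodule, giving (i). It is a torsion $T$-module (a quotient of one) and, being a $T$-module, is $\mathcal{X}$-torsion-free. Decompose the injective $T$-module $E_T(V)/W=\bigoplus_i E_T(N_i)$ with each $N_i$ a simple $T$-module; then $N_i=\mathrm{soc}(E_T(N_i))$ is a subquotient of $E_T(V)$. Now $\theta$ acts invertibly on every faithful simple $T$-module (the kernel and the image of multiplication by $\theta$ are submodules, forcing injectivity and surjectivity) but as zero on the unique unfaithful simple $T$-module $T/\theta T$; using this dichotomy one checks directly that any extension, in either direction, between $T/\theta T$ and a faithful simple $T$-module splits. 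Since $E_T(V)$ is a locally Artinian $T$-module with simple socle $V$, which is faithful, and $T/\theta T$ has no non-split extension with any faithful simple, $T/\theta T$ is not a subquotient of $E_T(V)$; hence every $N_i$ is faithful. Writing $N_i\cong T/z_iT$ with $z_i\in S\setminus XS$ irreducible (Lemma \ref{irreducible}(2)), $z_i$ is neither type $\textbf{(A)}$ (that would force $N_i\cong T/\theta T$) nor type $\textbf{(B)}$ (else $N_i\cong S/z_iS$ would be a $\mathcal{C}$-torsion simple $S$-submodule of the $\mathcal{C}$-torsion-free module $E_T(V)/W$), so $z_i$ is type $\textbf{(C)}$. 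By Proposition \ref{samehull}(1), $E_T(N_i)=E_T(T/z_iT)=E_S(S/z_iS)$, and $S/z_iS$ is a type $\textbf{(C)}$ $1$-critical $S$-module by Theorem \ref{onecrit}(2); moreover each $E_T(N_i)$ is injective as an $S$-module (injective $T$-modules are $S$-injective, by the argument in the proof of Proposition \ref{samehull}(1)) and indecomposable over $S$ (as $S/z_iS$ is uniform over $S$). Thus $\bigoplus_i E_T(N_i)$ is simultaneously the decomposition of $E_T(V)/W$ into indecomposable injective $S$-modules --- injective hulls of type $\textbf{(C)}$ $1$-critical modules, which is (ii) --- and its decomposition into injective hulls of simple $T$-modules, which is (iii). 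I expect the exclusion of a $T/\theta T$-summand (equivalently an $E_S(S/\theta S)$-summand) to be the main obstacle: it is not ruled out by $\mathcal{C}$-torsion-freeness, since $\mathcal{C}$ acts on $T/\theta T$ through units (its image there lies in $1+XR\subseteq R^\times$), so the $\mathrm{Ext}^1_T$-vanishing between $T/\theta T$ and faithful simples is genuinely needed; a lesser subtlety is that (3) and the fact that simple type $\textbf{(B)}$ $S$-modules are $\mathcal{C}$-torsion must be derived from the taxonomy of $\S$\ref{taxonomy} and Theorem \ref{Bavula} without circular use of (1).
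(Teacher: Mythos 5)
Your proof is correct, and for parts (2)--(4) it runs essentially as the paper's does; but in two places you take a genuinely different route. For (1), the paper simply quotes the fact that $1+I$ is an Ore set whenever the ideal $I=XS$ has the Artin--Rees property (\cite[Proposition 4.2.9(i)]{McCR}), whereas you verify the Ore condition by hand, showing via Lemma \ref{intersect}, the filtration of $T/cT$ and Theorem \ref{Bavula}(2),(3) that $S/cS$ is a $\mathcal{C}$-torsion module of finite length with type $(\mathbf{B})$ factors; this is longer but self-contained, and it delivers the content of (3) and (4) along the way, with no circularity since your proof of (3) uses only the taxonomy. For (5), the paper decomposes $E_T(V)/W$ as an $S$-module into hulls of critical $S$-modules and then shows every finitely generated critical $S$-submodule contains a cyclic type $(\mathbf{C})$ critical, excluding the $S/\theta S$ possibility by the direct computation $e\theta d=0$, $\theta d = d'\theta$ with $d'\in\mathcal{C}$, together with the absence of $\theta$-torsion in $E_T(V)$; you instead decompose over $T$ into hulls $E_T(N_i)$ of simple $T$-modules and exclude $N_i\cong T/\theta T$ by proving that extensions between $T/\theta T$ and faithful simple $T$-modules split and that consequently $T/\theta T$ cannot be a subquotient of the locally Artinian module $E_T(V)$ with faithful socle. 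Your route has the advantage of producing (ii) and (iii) simultaneously from one decomposition, and you correctly identify that $\mathcal{C}$-torsion-freeness alone cannot exclude a $T/\theta T$-summand (as $\mathcal{C}$ acts there through units of $R$); the price is that two steps you leave as sketches need to be written out: the splitting claim (this is exactly the paper's Corollary \ref{heck}, provable as you indicate by the semilinear bijectivity of right multiplication by $\theta$ on faithful simples), and the subquotient exclusion, which requires a short d\'evissage -- pass to a cyclic, hence $T$-Artinian, submodule of $E_T(V)$, take a composition series through the socle $V$, and split off the \emph{lowest} occurrence of $T/\theta T$ and push it down to contradict $\mathrm{soc}=V$; choosing the lowest occurrence matters because $\mathrm{Ext}^1_T(T/\theta T, T/\theta T)\neq 0$, so adjacent unfaithful factors cannot be split apart. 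With those details supplied, your argument is a valid and slightly more module-theoretic alternative to the paper's element computation.
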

\begin{proof}(1) Since $X$ is normal in $ S$, $X S$ satisfies the Artin-Rees property by \cite[Proposition 4.2.6]{McCR}. Hence, by
\cite[Proposition 4.2.9(i)]{McCR}, $\mathcal{C}$ is an Ore set in $ S$. It's clear from the definitions of $\mathcal{C}$ and $\alpha$ that $\mathcal{C}$ is $\alpha$-invariant.

\medskip
\noindent (2) That $W$ is an $ S-$submodule of $E_T (V)$ is a standard consequence of the Ore condition. By Theorem \ref{Bavula}(1) $V \cong T/zT$ for an irreducible element $z$ of $\mathcal{C}$, so that $0 \neq 1 + zT \in W$.

Let $y\in W$ and $c\in\mathcal{C}$ be such that $yc=0$. One can easily see that there is ${\overline c}\in \mathcal{C}$ such that $Xc={\overline c}X$. Then 
$$0=yc=(yX^{-1})({\overline c}X)$$
and $yX^{-1}{\overline c}=0$. Hence $WX^{-1}\subseteq W$ and the result follows. 

\medskip

\noindent (3) This is a straightforward check.

\medskip

\noindent (4) Clearly it is enough to prove that if $w \in W$ then $w S$ is Artinian with all composition factors type $\mathbf{(B)}$. But such an element $w$ satisfies $wc = 0$ for some $c \in \mathcal{C}$, so that $w S$ is a factor of $ S/c S$. The result follows since $ S/c S$ has the desired form by part (3), Lemma \ref{irred} and Theorem \ref{Bavula}(2).

\medskip

\noindent (5) That $E_T(V)/W$ is $\mathcal{C}$-torsion-free is an immediate consequence of the fact that $\mathcal{C}$ is multiplicatively closed.
It is $T$-injective since it is a factor of an injective $T$-module and $T$ is hereditary (being a principal right ideal ring). Since $E_T(V)/W$ is a $T$-module it is $\mathcal{X}$-torsion free. An easy lemma shows that injective $T$-modules are injective as $ S-$modules, so $E_T(V)/W$ is an injective $ S-$module. Since
$ S$ is noetherian, every injective $ S-$module is a direct sum of injective hulls of critical $ S-$modules. So to confirm the claimed structure of $E_T(V)/W$ as $ S-$module we have to confirm that each finitely generated critical $ S-$submodule $U$ of $E_T(V)/W$ contains a cyclic critical of type {\bf (C)}. First note that such a $U$ must be 1-critical: for $U$ is $\mathcal{X}$-torsion free since $E_T(V)/W$ is a $T$-module, so $U$ cannot be finite dimensional; and if $U$ is an infinite dimensional simple $ S-$module then it is type {\bf (B)}, and hence $\mathcal{C}$-torsion, which contradicts the fact that $E_T(V)/W$ is $\mathcal{C}$-torsion free. But now Theorem \ref{onecrit}(1) shows that $U$ is either ${\widehat S}/\theta{\widehat S}$ or it contains a submodule $ S/z S$ for a type {\bf (C)} irreducible $z$.

The first of these possibilities is ruled out by an argument similar to that used in part (2). Namely, if $U \cong  S/\theta S$ then there is an element $e$ of $E_T(V) \setminus W$ with $e\theta \in W$. But then $e\theta d = 0$ for an element $d$ which is a product of type {\bf(B)} irreducibles. It is easy to check that $\theta d = d'\theta$ where $d' \in \mathcal{C}$. Since $E_T(V)$ is an essential extension of the faithful simple $ S$-module $V$, it contains no non-zero elements killed by $\theta$. Hence $ed' = 0$, so $e \in W$, a contradiction.

The final claim in (5) follows immediately from Proposition \ref{samehull}(1).
\end{proof}

\begin{corollary}\label{yip} Continue with the notation of Proposition \ref{torsion}. Then the following are equivalent.
\begin{enumerate}
\item[(1)] $E_{ S}(V)$ is a locally Artinian $ S-$module.
\item[(2)] $W = E_{ S}(V)$.
\item[(3)] $W = E_T(V)$.
\end{enumerate}
\end{corollary}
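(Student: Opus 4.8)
The plan is to prove the three-way equivalence of Corollary \ref{yip} by establishing the cycle $(1)\Rightarrow(3)\Rightarrow(2)\Rightarrow(1)$, exploiting the structural dichotomy $E_{S}(V)=E_T(V)$ provided by Proposition \ref{samehull}(2) together with the decomposition of $W$ and of $E_T(V)/W$ in Proposition \ref{torsion}(4) and (5).

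For $(3)\Rightarrow(2)$, recall from Proposition \ref{samehull}(2) that $E_S(V)=E_T(V)$ as $S$-modules, so $W=E_T(V)$ says exactly $W=E_S(V)$; this implication is essentially a restatement. For $(2)\Rightarrow(1)$: if $W=E_S(V)$, then by Proposition \ref{torsion}(4) $W$ is locally Artinian as an $S$-module (indeed with all composition factors of type $\mathbf{(B)}$), so $E_S(V)$ is locally Artinian, which is (1). The substantive implication is $(1)\Rightarrow(3)$, which I would prove by contraposition: suppose $W\neq E_T(V)$. Then $E_T(V)/W$ is a nonzero module, and by Proposition \ref{torsion}(5)(ii) it is, as an $S$-module, a nonzero direct sum of injective hulls $E_S(S/zS)$ of $1$-critical type $\mathbf{(C)}$ modules. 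Pick one such summand $E_S(S/zS)$; it contains a copy of the $1$-critical module $S/zS$, which has Krull dimension $1$ and hence is not Artinian (an Artinian module has Krull dimension $0$). Since $E_T(V)/W$ is a subfactor — in fact a quotient — of $E_S(V)$, the preimage in $E_S(V)$ of this non-Artinian finitely generated submodule $S/zS\subseteq E_T(V)/W$ is a finitely generated submodule of $E_S(V)$ (a finitely generated extension of the cyclic module $S/zS$ by a finitely generated piece of $W$) that is not Artinian. Hence $E_S(V)$ is not locally Artinian, contradicting (1); so (1) forces $W=E_T(V)$.

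The one point requiring care — and the step I expect to be the main obstacle — is the lifting argument in $(1)\Rightarrow(3)$: one must produce a genuinely finitely generated non-Artinian submodule of $E_S(V)$ itself, not merely of the quotient $E_S(V)/W$. The clean way is to take $x\in E_T(V)$ mapping to a generator of the copy of $S/zS$ inside $E_T(V)/W$; then $xS$ is a cyclic (hence finitely generated) $S$-submodule of $E_S(V)$ whose image in $E_S(V)/W$ is $S/zS$, which has Krull dimension $1$. Since $\mathrm{Kdim}$ does not increase on passing to quotients, $\mathrm{Kdim}(xS)\geq 1$, so $xS$ is not Artinian, and we are done. Alternatively one can invoke that local Artinian-ness passes to submodules and that $E_T(V)/W$, being a nonzero $S$-module containing a non-Artinian finitely generated submodule, cannot embed in a locally Artinian module as a subquotient; either route closes the argument without further computation.
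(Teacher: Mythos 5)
Your proof is correct and follows essentially the same route as the paper, which disposes of the corollary by citing Proposition \ref{samehull} for the equivalence of (2) and (3) and Proposition \ref{torsion}(4),(5) for the equivalence with (1). You merely make explicit the lifting step the paper leaves implicit: choosing a cyclic submodule $xS$ of $E_S(V)$ mapping onto a copy of $S/zS$ in $E_T(V)/W$ and using that Krull dimension does not increase under quotients, which is a sound way to close that direction.
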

\begin{proof}$(2) \Leftrightarrow (3)$: By Proposition \ref{samehull}.

$(2) \Leftrightarrow (1)$: By Proposition \ref{torsion}(4),(5).
\end{proof}

\bigskip

\subsection{Equivalent conditions for $(\diamond)$ for $ S$}\label{equivs}
We summarise the equivalences between the nature of $\mathrm{Ext}$-spaces
of critical modules for $ S$ and $T$ and the validity of property $(\diamond)$ for $ S$ in Theorem \ref{gather}, which is a consequence of the following well-known result.

\begin{proposition}\cite[Corollary 3.2]{PP_99}\label{PP}
Let $U$ be a Noetherian domain, $a,b$ nonzero elements of $U$. Then the following are equivalent:
\begin{enumerate}
\item[(a)] $\mathrm{Ext}^1_U(U/aU, U/bU) = 0$;
\item[(b)] $\mathrm{Ext}^1_U(U/Ub, U/Ua) = 0$;
\item[(c)] $Ua + bU = U$.
\end{enumerate}
\end{proposition}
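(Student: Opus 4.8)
The statement is a classical fact about cyclic modules over a Noetherian domain, and I would prove it by a direct homological computation using the short exact sequence
\[
0 \longrightarrow aU \longrightarrow U \longrightarrow U/aU \longrightarrow 0,
\]
noting that $aU \cong U$ as a right $U$-module since $U$ is a domain (left multiplication by $a$ is injective). Applying $\mathrm{Hom}_U(-, U/bU)$ gives a long exact sequence whose relevant segment is
\[
\mathrm{Hom}_U(U, U/bU) \xrightarrow{\ (\cdot a)^*\ } \mathrm{Hom}_U(aU, U/bU) \longrightarrow \mathrm{Ext}^1_U(U/aU, U/bU) \longrightarrow \mathrm{Ext}^1_U(U, U/bU) = 0,
\]
the last term vanishing because $U$ is projective. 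Identifying $\mathrm{Hom}_U(U, U/bU)$ with $U/bU$ and $\mathrm{Hom}_U(aU, U/bU)$ with $U/bU$ again (via the isomorphism $aU \cong U$), the first map becomes left multiplication by $a$ on $U/bU$. Hence $\mathrm{Ext}^1_U(U/aU, U/bU) \cong (U/bU)\big/ a(U/bU) = U/(aU + bU)$, so (a) holds if and only if $aU + bU = U$, which is (c).

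The equivalence (b)$\Leftrightarrow$(c) is obtained by running the symmetric argument on the left, using the sequence $0 \to Ub \to U \to U/Ub \to 0$ with $Ub \cong U$ as a left module, applying $\mathrm{Hom}_U(-, U/Ua)$, and computing $\mathrm{Ext}^1_U(U/Ub, U/Ua) \cong U/(Ua + bU)$; since the two-sided expressions $aU + bU$ and $Ua + bU$ coincide as the statement only asks for the ideal-theoretic condition $aU + bU = U$ (equivalently $Ua+bU=U$, as both say $a$ is a unit mod $b$ on the appropriate side — here one uses that $U$ is a domain so that, while $aU$ and $Ua$ may differ, the conditions ``$aU+bU=U$'' and ``$Ua+bU=U$'' are each equivalent to (a), resp. (b)), we are done. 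In fact the cleanest route is simply to prove (a)$\Leftrightarrow$(c) and (b)$\Leftrightarrow$(c') where (c') is $Ua + bU = U$, and then observe that (c) and (c') are both equivalent to saying $U/bU$ is killed by $a$ acting invertibly; but since the paper cites \cite[Corollary 3.2]{PP_99} for the precise form, I would just reproduce whichever normalisation of (c) is used there.

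The main subtlety — really the only place care is needed — is the identification of the connecting map with multiplication by $a$, and in particular getting the sides right: over a noncommutative domain $aU$ and $Ua$ are generally different right (resp. left) submodules, so one must be scrupulous about whether one works with $\mathrm{Hom}$ of right modules (giving the condition on $aU + bU$) or left modules (giving $Ua + bU$), and about the direction of the module isomorphism $aU \cong U_U$. Everything else is formal: projectivity of $U$ kills the higher $\mathrm{Ext}$, and the $\mathrm{Hom}$-identifications are the standard $\mathrm{Hom}_U(U, -) \cong \mathrm{id}$. Since this is a cited lemma I would keep the write-up to a few lines, presenting the right-module computation in full and remarking that the left-module case is symmetric.
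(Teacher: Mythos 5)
The paper offers no proof of this statement---it is simply quoted from Prest--Puninski---so the only question is whether your argument is sound, and the route you choose (the long exact sequence attached to $0 \to aU \to U \to U/aU \to 0$, projectivity of $U$, and the standard Hom identifications) is indeed the natural one. However, you go wrong at precisely the point you yourself single out as the crux. With the identifications $\mathrm{Hom}_U(U,U/bU)\cong U/bU$, $f\mapsto f(1)$, and $\mathrm{Hom}_U(aU,U/bU)\cong U/bU$, $g\mapsto g(a)$ (via $U\cong aU$, $1\mapsto a$), the restriction map sends $f$ to $f(a)=f(1)\cdot a$ by right-linearity; that is, it is \emph{right} multiplication by $a$ on $U/bU$, not left multiplication. (Left multiplication by $a$ does not even descend to the right module $U/bU$ when $U$ is noncommutative, so the expression $(U/bU)/a(U/bU)$ is not meaningful as written.) The image of the map is $(Ua+bU)/bU$, whence $\mathrm{Ext}^1_U(U/aU,U/bU)\cong U/(Ua+bU)$ as abelian groups, and (a) is equivalent to the mixed condition $Ua+bU=U$, i.e.\ exactly (c) as stated---not to $aU+bU=U$ as you conclude.

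The attempt to patch this by asserting that $aU+bU=U$ and $Ua+bU=U$ ``coincide'' is where the gap becomes genuine: in a noncommutative domain (and the ring $S$ of this paper is noncommutative) these are in general different conditions, and no argument is given that they agree. The repair is simply to carry out the identification correctly on both sides: the right-module computation gives $\mathrm{Ext}^1_U(U/aU,U/bU)\cong U/(Ua+bU)$, and the symmetric left-module computation (restriction along $U\cong Ub$, $1\mapsto b$, becomes left multiplication by $b$ on $U/Ua$) gives $\mathrm{Ext}^1_U(U/Ub,U/Ua)\cong U/(Ua+bU)$ as well. Both Ext groups are thus controlled by the same mixed sum $Ua+bU$, which is exactly why (a), (b) and (c) are all equivalent; once the sides are kept straight, no reconciliation of one-sided sums is needed.
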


Before applying Proposition \ref{PP} to the analysis of $(\diamond)$ for $ S$ we record the following corollary, which we will need below and which can be deduced either from the Artin-Rees property of $\theta T$ together with $(a) \Longleftrightarrow (b)$, or by elementary calculations in $T$ using criterion $(c)$ along with our taxonomy in $\S$\ref{taxonomy} of canonical irreducible elements.

\begin{corollary}\label{heck} Let $c$ be an irreducible element of $T$ which is not an associate of $\theta$. Then
$$ \mathrm{Ext}^1_T(T/Tc, T/T\theta) \; = \; \mathrm{Ext}^1_T(T/T\theta,T/Tc) \; = \; 0,$$
and similarly for right modules.
\end{corollary}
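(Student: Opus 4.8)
The plan is to derive all four vanishing statements from the coprimality criterion of Proposition \ref{PP}, applied inside the principal ideal domain $T$. Unwinding the correspondence in that proposition, $\mathrm{Ext}^1_T(T/Tc, T/T\theta) = 0$ is equivalent to the left ideal / right ideal identity $T\theta + cT = T$, and $\mathrm{Ext}^1_T(T/T\theta, T/Tc) = 0$ is equivalent to $Tc + \theta T = T$. Likewise the two ``right module'' statements $\mathrm{Ext}^1_T(T/cT, T/\theta T) = 0$ and $\mathrm{Ext}^1_T(T/\theta T, T/cT) = 0$ translate, via the equivalence of parts $(a)$ and $(c)$ of Proposition \ref{PP}, into the very same two identities $Tc + \theta T = T$ and $T\theta + cT = T$. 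So the entire corollary reduces to establishing these two coprimality relations for an irreducible $c \in T$ that is not an associate of $\theta$.

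For that, first note that $\theta T = T\theta$ is the augmentation ideal of $T = Q[\theta;\alpha]$, namely the set of polynomials with zero constant term; it is therefore a two-sided ideal with $T/\theta T \cong Q$. Since $Q$ is a field, $T/\theta T$ is a simple module both as a left and as a right $T$-module, i.e. $\theta T$ is simultaneously a maximal left ideal and a maximal right ideal of $T$. Hence for any $c$ the sum $cT + \theta T$ is either $\theta T$ or $T$, and similarly $Tc + \theta T$ is either $\theta T$ or $T$; in each case the sum equals $T$ exactly when $c \notin \theta T$, respectively $c \notin T\theta$. It remains to check that our $c$ lies in neither: if $c = \theta t$ with $t \in T$ then, since $c$ is irreducible and $\theta$ is a non-unit, $t$ must be a unit and $c$ is an associate of $\theta$, contrary to hypothesis; the case $c = t\theta$ is identical. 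This gives $cT + \theta T = T = Tc + \theta T$, as required.

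There is essentially no real obstacle here: the one substantive point is the observation that $\theta$ generates a maximal two-sided ideal of $T$ with field quotient $Q$, and the only thing to be careful about is the left/right bookkeeping in Proposition \ref{PP}. As indicated in the statement, one could alternatively avoid criterion $(c)$ altogether and instead combine the Artin--Rees property of $\theta T$ \cite[Proposition 4.2.6]{McCR} with the equivalence $(a) \Longleftrightarrow (b)$ of Proposition \ref{PP}, but the direct computation above is shorter.
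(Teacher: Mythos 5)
Your proof is correct, and the left/right bookkeeping with Proposition \ref{PP} is handled accurately: all four vanishings do reduce to the two identities $T\theta + cT = T$ and $Tc + \theta T = T$, and your verification of these is sound. The paper itself gives no written proof of Corollary \ref{heck}; it only indicates two possible routes, namely Artin--Rees for $\theta T$ combined with $(a)\Longleftrightarrow(b)$ of Proposition \ref{PP}, or ``elementary calculations in $T$ using criterion $(c)$ along with the taxonomy'' of canonical irreducible elements. You have in effect carried out the second route, but with a cleaner structural substitute for the taxonomy step: since $\alpha$ extends to $Q$, $\theta T = T\theta$ is a two-sided ideal with $T/\theta T \cong Q$ a field, so it is simultaneously a maximal left and a maximal right ideal, and the coprimality conditions collapse to the single statement that $\theta$ does not divide $c$ on either side, which is immediate from irreducibility of $c$ and the hypothesis that $c$ is not an associate of $\theta$. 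The taxonomy-based computation the authors have in mind amounts to the same thing expressed differently (an irreducible $c$ not associate to $\theta$ has, after normalisation, a nonzero constant term in $\theta$, i.e.\ is a unit modulo $\theta T$), so your argument buys a small simplification: no normalisation of $c$ into type $(\mathbf{A})$, $(\mathbf{B})$ or $(\mathbf{C})$ form is needed, and the argument visibly works for any irreducible of $T$ not associate to $\theta$, which is exactly the generality the corollary asserts.
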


The following lemma is needed for the proof of Theorem \ref{gather}

\begin{lemma}\label{patch} Let $w$ and $z$ be irreducible elements of $ S$, with $w$ of type {\bf (B)} and $z$ of type {\bf (C)}. Then $\mathrm{Ext}^1_S(S/zS,S/wS) \neq 0$ if and only if there is a short exact sequence of $S$-modules
$$ 0 \longrightarrow S/wS \longrightarrow A \longrightarrow S/zS \longrightarrow 0 $$
with $A$ uniform.
\end{lemma}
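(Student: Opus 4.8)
The plan is to prove the two implications separately, the reverse one being immediate while the forward one does the real work. For the reverse implication, suppose we are handed a short exact sequence $0 \to S/wS \to A \to S/zS \to 0$ with $A$ uniform. Since $w$ has type {\bf (B)}, $S/wS$ is a faithful simple $S$-module by Theorem \ref{Bavula}(2), and since $z$ has type {\bf (C)}, $S/zS$ is a nonzero $1$-critical $S$-module by Theorem \ref{onecrit}(2); in particular both are nonzero. A split sequence would make $A \cong S/wS \oplus S/zS$, which contains two nonzero submodules meeting in $0$ and so is not uniform; hence the sequence is non-split and represents a nonzero class in $\mathrm{Ext}^1_S(S/zS, S/wS)$.

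For the forward implication I would start from a non-split extension $0 \to V \to A \to S/zS \to 0$, writing $V := S/wS$, and show that $A$ is automatically uniform. Since $V$ is simple, it is enough to prove that $V$ is essential in $A$. Arguing by contradiction, choose (by Zorn's lemma) a submodule $C$ of $A$ maximal with respect to $V \cap C = 0$; then $C \neq 0$, the image of $V$ in $A/C$ is essential, and so $A/C$ is uniform. The crucial point is to analyse $G := A/(V \oplus C)$: it is the quotient of $A/V \cong S/zS$ by the nonzero image of $C$, so, $S/zS$ being $1$-critical, $G$ has Krull dimension $0$; being also cyclic over the Noetherian ring $S$ it is then of finite length, and by Lemma \ref{typee}(1) each of its composition factors is finite-dimensional over $k$ and hence an unfaithful simple module. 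Lemma \ref{split} gives $\mathrm{Ext}^1_S(U, V) = 0$ for every unfaithful simple $U$, so an induction on the composition length of $G$, via the long exact sequence for $\mathrm{Ext}$, yields $\mathrm{Ext}^1_S(G, V) = 0$. Hence the sequence $0 \to V \to A/C \to G \to 0$ splits; since $A/C$ is uniform and $V \neq 0$ this forces $G = 0$, i.e. $A = V \oplus C$, whence the original sequence splits --- a contradiction. Therefore $V$ is essential in $A$, so $A$ is uniform and provides the desired sequence.

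The only step needing genuine care --- and the main obstacle --- is the finiteness analysis of $G$: it really does use both that $S/zS$ is $1$-critical (to force Krull dimension $0$, hence finite length) and Lemma \ref{typee}(1) (to force the composition factors to be finite-dimensional, hence unfaithful), which together are exactly what is needed to invoke Lemma \ref{split}. Everything else --- choosing a complement $C$, recognising the essential submodule of $A/C$, and running the long exact sequence for $\mathrm{Ext}$ --- is routine.
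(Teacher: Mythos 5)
Your proof is correct and follows essentially the same route as the paper's: both take a maximal complement $C$ (the paper's $B$) to $S/wS$ in a non-split extension $A$, pass to the uniform quotient $A/C$, and use the fact that the nonzero image of $C$ makes the cokernel a proper factor of the $1$-critical module $S/zS$ — hence finite length with finite-dimensional, unfaithful composition factors — so that Lemma \ref{split} (via the long exact sequence, which you make explicit and the paper leaves implicit) forces a splitting and a contradiction. No changes needed.
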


\begin{proof} The implication from right to left is clear from basic properties of $\mathrm{Ext}$. For the reverse implication, set $W :=  S/w S$ and $U :=  S/z S$, and suppose that $\mathrm{Ext}^1_S(U,W) \neq 0$. Then there exists a module extension
\begin{equation}\label{this} 0 \longrightarrow W \xrightarrow{\alpha} A \xrightarrow{\pi} U \longrightarrow 0 
\end{equation}
in which $A$ does not split as the direct sum of $U$ and $W$. We suppose for a contradiction that $A$ is not uniform. Thus $\alpha (W)$ is not essential in $A$, so we can choose a non-zero submodule $B$ of $A$ maximal such that 
\begin{equation}\label{grr} B \,\cap \, \alpha (W) \; = \; 0.
\end{equation}
We now obtain from (\ref{this}) the exact sequence
\begin{equation}\label{that}  0 \longrightarrow W \xrightarrow{\overline{\alpha}} A/B \longrightarrow U/\pi(B) \longrightarrow 0,
\end{equation}
Note that, in (\ref{that}), 
\begin{equation}\label{huh} U/\pi(B) \; \neq \; 0 \; \neq \pi(B), 
\end{equation}
due to (\ref{grr}) and the assumption that (\ref{this}) is non-split.

We claim that $\overline{\alpha} (W)$ is essential in $A/B$. Suppose that this is false, and let $C$ be a non-zero submodule of $A/B$ with $\overline{\alpha} (W) \cap C = 0$. Writing $C$ as $D/B$ for a submodule $D$ of $A$ strictly containing $B$, this is equivalent to 
$$ \alpha (W) \cap D \, \subseteq \, \alpha (W) \cap B \, = \, 0,$$
which contradicts the maximality of $B$. Thus $\overline{\alpha}(W)$ is indeed essential in $A/B$. Equivalently, since $W$ is simple,  $A/B$ is uniform, so the exact sequence (\ref{that}) shows that 
\begin{equation}\label{gotit} \mathrm{Ext}^1_S(U/\pi(B),  W) \neq 0.
\end{equation}
But the module $U/\pi(B)$ is finite dimensional by Lemma \ref{typee}, using (\ref{huh}) and since $z$ is a type $(\mathbf{C})$  irreducible element. This means that (\ref{gotit}) contradicts Lemma \ref{split}. Therefore the module $A$ in (\ref{this}) is uniform, as required.
\end{proof}

To state Theorem \ref{gather} precisely it is necessary to consider, here and in Theorem \ref{target}, one-sided versions of property $(\diamond)$ - thus we say that an algebra $A$ satisfies \emph{right} $(\diamond)$ if the injective hulls of its simple right modules are locally Artinian; and similarly for left $(\diamond)$. 
\begin{theorem}\label{gather}\begin{enumerate}
\item[(1)] Let $w$ and $z$ be irreducible elements of $ S$, with
$w$ of type {\bf (B)} and $z$ of type {\bf (C)}. Then the following are equivalent:
\begin{enumerate}
\item[(a)] $\mathrm{Ext}^1_{ S}( S/z S,  S/w S) = 0$;
\item[(b)] $\mathrm{Ext}^1_{ S}( S/ Sw,  S/ Sz) = 0$;
\item[(c)] $\mathrm{Ext}^1_T(T/zT, T/wT) = 0$;
\item[(d)] $\mathrm{Ext}^1_T(T/Tw, T/Tz) = 0$.
\end{enumerate}
\item[(2)] $ S$ satisfies right $(\diamond)$ if and only if the equivalent statements in (1) hold for all such elements $z$ and $w$.
\end{enumerate}
\end{theorem}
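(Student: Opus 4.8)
The plan is to prove Theorem \ref{gather} in two stages, first establishing the four-way equivalence in part (1), then using it together with the structural results of $\S$\ref{Binj} to deduce the characterisation of right $(\diamond)$ in part (2).

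\textbf{Part (1).} The equivalences $(a)\Longleftrightarrow(c)$ and $(b)\Longleftrightarrow(d)$ are the ones to pin down; once we have them, $(c)\Longleftrightarrow(d)$ is immediate from Proposition \ref{PP} applied to the Noetherian domain $T$ (with $a=z$, $b=w$), since criterion $(c)$ of that proposition is left-right symmetric. To get $(a)\Longleftrightarrow(c)$ I would localise: because $z,w\in S\setminus XS$ are type (C) and (B) irreducibles, Lemma \ref{intersect} and Remark \ref{extend} tell us $S/zS$ and $S/wS$ are $\mathcal{X}$-torsion free, so $S/zS\otimes_S T\cong T/zT$ and likewise for $w$. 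Since $T$ is a flat left $S$-module and $S/zS$ is finitely presented, there is a natural map $\mathrm{Ext}^1_S(S/zS,S/wS)\to \mathrm{Ext}^1_T(T/zT,T/wT)$. The cleaner route, however, is via Lemma \ref{patch}: $\mathrm{Ext}^1_S(S/zS,S/wS)\neq 0$ iff there is a \emph{uniform} extension $0\to S/wS\to A\to S/zS\to 0$; such an $A$ is $\mathcal{X}$-torsion free (being an extension of $\mathcal{X}$-torsion-free modules, or because $S/wS$ is essential in it and is $\mathcal{X}$-torsion free), hence embeds in $A\otimes_S T$, which is a nonsplit extension of $T/zT$ by $T/wT$; conversely a nonsplit $T$-extension restricts to a nonsplit $S$-extension whose middle term is uniform as a $T$-module, hence uniform as an $S$-module since $T=S\langle X^{-1}\rangle$ and the $S$-submodule lattice of a $\mathcal{X}$-torsion-free module injects into its $T$-submodule lattice. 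The symmetric statement $(b)\Longleftrightarrow(d)$ follows by the same argument applied to left modules (noting $X$ is normal, so the left-handed versions of Lemma \ref{intersect} and Lemma \ref{patch} hold verbatim), together with the left-handed form of Lemma \ref{split}.

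\textbf{Part (2).} Here I would use Corollary \ref{yip}: $S$ satisfies right $(\diamond)$ iff, for every faithful simple $S$-module $V$, the submodule $W$ of $\mathcal{C}$-torsion elements equals $E_T(V)$, i.e. $E_T(V)/W=0$. (The unfaithful simples are handled by Lemma \ref{torsion1}(2), which already shows their injective hulls are locally Artinian, and $S/XS$ itself has hull an infinite tower of copies of $k(\theta)$ which is \emph{not} locally Artinian — but $S/XS$ is not simple; the simple unfaithful modules are the $V_f$, covered by the locally-finite-dimensional case. This is why only faithful simples matter.) By Proposition \ref{torsion}(5), when $E_T(V)/W\neq 0$ it decomposes as a direct sum of injective hulls $E_S(S/cS)$ of type (C) 1-criticals; if such a summand exists, one produces inside $E_S(V)$ a nonsplit extension of a type (C) module by a type (B) module (an element $e\in E_T(V)\setminus W$ with $ec'\in W$ for a suitable product $c'$ of type (B) irreducibles yields, after factoring, a uniform nonsplit extension $0\to S/wS\to A\to S/zS\to 0$ with $w$ type (B), $z$ type (C)), so the Ext group in (1)(a) is nonzero for that pair $z,w$. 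Conversely, if the Ext group in (1)(a) is nonzero for some type (C) $z$ and type (B) $w$, then Lemma \ref{patch} gives a uniform nonsplit extension $A$ of $S/zS$ by $S/wS$; embedding $S/wS$ into the injective hull of the faithful simple $T$-module $T/wT$ (and using Theorem \ref{Bavula} to identify that simple as faithful) exhibits a type (C) 1-critical inside $E_T(T/wT)/W$ for the relevant $V$, so $E_T(V)/W\neq 0$ and right $(\diamond)$ fails by Corollary \ref{yip}.

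\textbf{Anticipated obstacle.} The main delicacy is the faithful-vs-unfaithful bookkeeping in part (2) and making the correspondence between ``$E_T(V)/W$ has a type (C) summand'' and ``some Ext group $\mathrm{Ext}^1_S(S/zS,S/wS)$ is nonzero'' fully rigorous in \emph{both} directions — in particular, checking that every faithful simple $T$-module arises as $T/wT$ for a type (B) irreducible $w$ (Theorem \ref{Bavula}(1),(2)) and that the type (B) irreducible factors of elements of $\mathcal{C}$ exhaust, up to similarity in $T$, all type (B) irreducibles one needs. The Ext-group manipulations themselves reduce, via Lemma \ref{patch}, Corollary \ref{heck}, and Lemma \ref{split}, to the bookkeeping of which critical modules can sit on top of or below which, so once the dictionary between the $W$-picture and the Ext-picture is set up carefully, the rest is routine.
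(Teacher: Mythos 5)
Your overall plan is the paper's: part (1) via Proposition \ref{PP} plus the Lemma \ref{patch}/localisation argument, and part (2) via Lemma \ref{torsion1}, Corollary \ref{yip} and Proposition \ref{torsion}. In (1) you rearrange the logic slightly: the paper obtains $(a)\Leftrightarrow(b)$ and $(c)\Leftrightarrow(d)$ immediately from Proposition \ref{PP} (applied to $S$ and to $T$ respectively) and only proves $(a)\Leftrightarrow(c)$ by localising a uniform extension, whereas you prove $(a)\Leftrightarrow(c)$ and a left-handed $(b)\Leftrightarrow(d)$; that is legitimate, since the whole setting is left--right symmetric, but it costs you a second (left-handed) run through Lemma \ref{patch}, Lemma \ref{split} and Lemma \ref{typee} where a single application of Proposition \ref{PP} to $S$ already gives $(a)\Leftrightarrow(b)$. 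Your converse direction in (2) (nonzero $\mathrm{Ext}$ forces $W\subsetneq E_T(V)$, hence failure of $(\diamond)$ by Corollary \ref{yip}) is sound.

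There is, however, a concrete problem in your forward direction of (2). You propose to take $e\in E_T(V)\setminus W$ with $ec'\in W$ for a suitable \emph{product $c'$ of type $(\mathbf{B})$ irreducibles}. No such element exists: up to a unit of $S$, a product of type $(\mathbf{B})$ irreducibles lies in $\mathcal{C}=\{1+Xs: s\in S\}$, and $E_T(V)/W$ is $\mathcal{C}$-torsion free (Proposition \ref{torsion}(5)(i)), so $ec'\in W$ already forces $e\in W$. What you need is the opposite configuration: using Proposition \ref{torsion}(5)(ii) and Theorem \ref{onecrit}, choose $e$ so that $(eS+W)/W\cong S/dS$ for a type $(\mathbf{C})$ irreducible $d$ (so $ed\in W$); then $eS\cap W$ has finite length with faithful, i.e.\ type $(\mathbf{B})$, simple composition factors by Proposition \ref{torsion}(4), and the nonsplit sequence $0\to eS\cap W\to eS\to S/dS\to 0$ (nonsplit because $eS$ is uniform) gives $\mathrm{Ext}^1_S(S/dS, eS\cap W)\neq 0$. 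Your phrase ``after factoring'' hides a second gap here: passing to a quotient of $eS$ by a submodule of $eS\cap W$ need not remain nonsplit, so you cannot simply truncate to a length-one bottom; the paper instead applies the long exact sequence of $\mathrm{Ext}$ along a composition series of $eS\cap W$ to extract a single type $(\mathbf{B})$ simple $Y\cong S/wS$ with $\mathrm{Ext}^1_S(S/dS,Y)\neq 0$, which is exactly the failure of (1)(a). With these two repairs your argument coincides with the paper's proof.
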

\begin{proof}(1) That (a) is equivalent to (b) and (c) is equivalent to (d) is immediate from Proposition \ref{PP}.

To see that (a)$\Longleftrightarrow$(c), set $W :=  S/w S$ and $U :=  S/z S$, so that $W = T/wT$ and $E_{ S}(W) = E_T(W)$ by Proposition \ref{samehull}(2) and Theorem \ref{Bavula}(2). Suppose first that $\mathrm{Ext}^1_{ S}(U,W) \neq 0$. Then by Lemma \ref{patch} there exists a cyclic uniform $ S-$module $V$ which is an extension of $W$ by $U$. Therefore, by Lemma \ref{intersect}(2) and Proposition \ref{samehull}(1), $V \otimes_{ S} T$ is a cyclic uniform $T$-module, which is
an essential extension of its simple $T$-submodule $W \otimes_{ S} T = W$, where the equality follows from Lemma \ref{both}.  Exactness
of the functor $- \otimes_{ S} T$
 and the fact that $U$ is $\mathcal{X}-$torsion free guarantee that
$$ (V \otimes_S T)/ W \cong T/zT \neq 0.$$
This proves that $\mathrm{Ext}^1_T(T/zT,T/wT) \neq 0$, so that (c)$\Longrightarrow$(a). The converse is proved by a similar but easier argument.

\medskip

\noindent (2) Suppose that right $(\diamond)$ holds for $ S$. This means that, for all type {\bf (B)} irreducible elements $w$ of $ S$, $E_{ S}( S/w S)$ is locally artinian. Transferring this fact to the $T-$modules $E_T(T/wT)$ by means of Theorem
\ref{Bavula} and Proposition \ref{samehull}(2), we see that none of these
injective $T-$modules $E_T(T/wT)$ contains a simple $T$-subfactor $T/zT$ with $z$ of type {\bf (C)}, proving (c).

Conversely, if right $(\diamond)$ fails for $ S$ then by Corollary \ref{yip} there
exists a faithful simple right $ S-$module $V$ for which, in the notation of Proposition \ref{torsion}, $W \subsetneq E_{ S}(V)$. In view of Theorem \ref{onecrit} and Proposition \ref{torsion}(5)(ii) there exists a type $\mathrm{(\bf{C})}$ irreducible $d$ of ${ S}$ such that ${ S}/d{ S}\subseteq E_{ S}(V)/W$. Take $e\in E_{ S}(V)$ such that $(e{ S}+W)/W$ is isomorphic to ${ S}/d{ S}$. Then $eS$ is a uniform extension of $V$, and $eS \cap W$ has a finite composition series whose factors are all faithful simple $S$-modules, by Proposition \ref{torsion}(4). Since $eS/eS \cap W \cong S/dS$, an application of the long exact sequence of $\mathrm{Ext}$ shows that
$$ \mathrm{Ext}^1_S(S/dS, Y) \; \neq\; 0 $$
for some composition factor $Y$ of $eS \cap W$. Hence (a) fails to hold, as required.

\end{proof}

\medskip

The following consequence of the above result reduces the issue of the validity of right $(\diamond)$ for $ S$ to a ``monoidal commutativity" condition on the irreducible elements of $T$, as in $(e)$.

\begin{theorem}\label{target} The following statements are equivalent.
\begin{enumerate}
\item[(a)] Right $(\diamond)$ holds for $ S$.
\item[(b)] There does not exist a uniserial right $T$-module with composition length 2 whose socle is a type {\bf (B)} simple and whose simple image is type {\bf (C)}.
\item[(c)] There does not exist a uniserial left $T$-module with composition length 2 whose socle is a type {\bf (C)} simple and whose simple image is type {\bf (B)}.
\item[(d)] There does not exist a cyclic 1-critical left $ S-$module with an infinite dimensional simple image.
\item[(e)] Let $b, c \in  S$ be irreducible, with $b$ type {\bf (B)} and $c$ type {\bf (C)}. Then there exist $b', c'$ in $ S$, respectively irreducibles of types {\bf (B)} and {\bf (C)} and respectively similar to $b$ and to $c$, with
$$cb \; = \; b' c'.$$
\end{enumerate}
\end{theorem}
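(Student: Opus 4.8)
\textbf{Proof proposal for Theorem \ref{target}.}

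The plan is to establish the cycle of implications $(a) \Leftrightarrow (b)$, $(b) \Leftrightarrow (c)$, $(c) \Leftrightarrow (d)$, and $(d) \Leftrightarrow (e)$, leaning heavily on Theorem \ref{gather}, Theorem \ref{Bavula}, Theorem \ref{onecrit} and the similarity dictionary of Theorem \ref{Jacobson}. First I would dispose of $(a) \Leftrightarrow (b)$: by Theorem \ref{gather}(2), right $(\diamond)$ for $S$ fails precisely when $\mathrm{Ext}^1_T(T/zT, T/wT) \neq 0$ for some type {\bf (B)} irreducible $w$ and type {\bf (C)} irreducible $z$ of $S$, equivalently (by Theorem \ref{Bavula}(2) and Theorem \ref{onecrit}(2)) when there is a non-split extension of a type {\bf (B)} simple $T$-module by a quotient $T/zT$; since $T/zT$ has a type {\bf (C)} simple image and everything in sight is uniform/cyclic, such an extension contains a uniserial length-2 submodule with socle type {\bf (B)} and image type {\bf (C)}. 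Conversely any such uniserial module gives a nonzero $\mathrm{Ext}^1_T$. The equivalence $(b) \Leftrightarrow (c)$ is the left-right symmetry packaged in Theorem \ref{gather}(1), equivalences (c)$\Leftrightarrow$(d): a uniserial right $T$-module as in (b) is exactly a non-vanishing $\mathrm{Ext}^1_T(T/zT,T/wT)$, which by Proposition \ref{PP} equals $\mathrm{Ext}^1_T(T/Tw,T/Tz) \neq 0$, i.e. a uniserial left $T$-module with socle $T/Tz$ (type {\bf (C)}) and image $T/Tw$ (type {\bf (B)}).

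For $(c) \Leftrightarrow (d)$ I would run the left-module analogues of the arguments in $\S$\ref{Binj}: a cyclic 1-critical left $S$-module $M$ with infinite-dimensional simple image must be $\mathcal{X}$-torsion free (same Artin-Rees argument as in Theorem \ref{onecrit}(1)(b)), so $M \otimes_S T$ makes sense and is a cyclic uniform left $T$-module; its simple image is an infinite-dimensional simple $S$-module, hence type {\bf (B)} by Theorem \ref{Bavula}(4), and its socle, being a $\mathcal{X}$-torsion-free cyclic critical, is hull-similar to $S/Sz$ for $z$ type {\bf (C)} by the left analogue of Theorem \ref{onecrit}. Passing to $T$ gives the uniserial module of (c). The reverse direction lifts a uniserial left $T$-module of length 2 back to $S$ using Lemma \ref{intersect} (applied on the left, via Remark \ref{extend}) to produce a cyclic uniform left $S$-module, which one checks is 1-critical with the required simple image.

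The substantive step is $(d) \Leftrightarrow (e)$, and I expect the implication $\neg(d) \Rightarrow \neg(e)$ — equivalently, producing the factorisation $cb = b'c'$ from the failure of the monoidal commutation — to be, conversely, the easy half here: if $cb = b'c'$ with $b'$ type {\bf (B)} and $c'$ type {\bf (C)} then in $T$ one has $Tb'c' = Tcb \supseteq Tcb$; the left $T$-module $T/Tcb$ is uniserial of length 2 (since $c, b$ are irreducible in $T$ by Lemma \ref{irreducible}(1)) with submodule $Tb/Tcb \cong T/Tc$ of type {\bf (C)} — wait, I should instead read $cb = b'c'$ as saying $T c' \subseteq T b'$ is a type {\bf (B)} quotient chain, giving a uniserial module with socle type {\bf (C)} and image type {\bf (B)}; transporting to $S$ via the left analogue of Lemma \ref{intersect} yields the cyclic 1-critical left $S$-module of $\neg(d)$. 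The hard direction is $\neg(d) \Rightarrow \neg(e)$: given the uniserial left $T$-module, i.e. elements $\gamma$ type {\bf (C)} and $\delta$ type {\bf (B)} with $T\delta \subset T\gamma$ of colength giving a type {\bf (B)} image, one gets $\gamma = \delta'' \cdot (\text{something})$... more precisely a relation $Tc \cap Tb' \neq$ trivial forcing a least common left multiple expression $cb = b'c'$ in $T$; the real work is (i) to arrange, using Theorem \ref{Jacobson} and the freedom to replace $b', c'$ by similar elements, that $b', c'$ can be taken to lie in $S \setminus XS$, hence irreducible in $S$ of the correct types by Lemma \ref{irreducible}(1) and the taxonomy of $\S$\ref{taxonomy}, and (ii) to check the type bookkeeping — that $b'$ really is type {\bf (B)} and $c'$ type {\bf (C)}, which follows because similarity in $T$ preserves $\theta$-degree (Remark \ref{gap}) and preserves the isomorphism class of the cyclic factor, pinning down the type via Theorems \ref{Bavula}(3) and \ref{onecrit}(3). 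Normalising the leading $X$-adic coefficients to clear denominators is the routine-but-fiddly part I would not grind through in detail.
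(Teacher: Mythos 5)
Your reductions $(a)\Leftrightarrow(b)\Leftrightarrow(c)$ via Theorem \ref{gather} and Proposition \ref{PP} match the paper, and your outline of $(c)\Leftrightarrow(d)$ follows the paper's route, though it glosses over real work: you must show the $1$-critical submodule is \emph{faithful} (ruling out $S/S\theta$, which the paper does with the Artin--Rees property of $X^i\theta^jS$ and Corollary \ref{heck}), you must handle the fact that $T\otimes_S M$ need not have length $2$ (the paper uses Corollary \ref{heck} plus the long exact sequence of $\mathrm{Ext}$), and in the converse you must actually produce a \emph{cyclic} critical module, which the paper does by splitting off the finite-dimensional piece $F_0/Su$ using Lemma \ref{torsion1}.

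The fatal problem is the step linking $(e)$ to the module statements: you have the correspondence exactly inverted. A factorisation $cb=b'c'$ with $b'$ of type $(\mathbf{B})$ and $c'$ of type $(\mathbf{C})$ does \emph{not} produce a uniserial module; it destroys one. Indeed $T/Tcb$ then contains the two simple submodules $Tb/Tcb\cong T/Tc$ (type $(\mathbf{C})$) and $Tc'/Tcb=Tc'/Tb'c'\cong T/Tb'$ (type $(\mathbf{B})$), which are non-isomorphic by Theorems \ref{Bavula}(3) and \ref{onecrit}(3), so $T/Tcb$ is semisimple rather than uniform, and no module as in $\neg(c)$ or $\neg(d)$ arises from it. Conversely, if a forbidden uniserial module exists it is isomorphic to $T/Tcb$ for suitable $b,c$, and its \emph{unique} proper nonzero submodule is $Tb/Tcb\cong T/Tc$ of type $(\mathbf{C})$; since in the PID $T$ every submodule of $T/Tcb$ corresponds to a factorisation of $cb$, this says precisely that no factorisation $cb=b'c'$ of the prescribed shape exists --- the opposite of what your ``hard direction'' tries to extract. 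So as written your argument would establish that $(e)$ is equivalent to the \emph{failure} of right $(\diamond)$, contradicting the theorem and its use in Proposition \ref{yippee}. The paper's actual proof runs the logic the right way round: assuming $(c)$, the length-$2$ module $T/Tcb$ cannot be uniserial of the forbidden form, hence splits, and the annihilators $Tc'$, $Tb''$ of the two coordinates of the image of $1$ yield $Tb''c'=Tcb$, whence $Sb''c'=Scb$ by Lemma \ref{intersect}(1) and Remark \ref{extend} and the factorisation after adjusting by a unit $1+Xw$; while assuming $(e)$, any forbidden uniserial $M\cong T/Tcb$ would contain the type $(\mathbf{B})$ simple $T(c'm)\cong T/Tb'$, contradicting that its socle is type $(\mathbf{C})$.
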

\begin{proof} $(a) \Longleftrightarrow (b)$: This follows from Theorem \ref{gather}(2) with (c) of Theorem \ref{gather}(1).

\medskip

\noindent $(b)\Longleftrightarrow (c)$: This is the equivalence of Theorem \ref{gather}(c) and (d).

\medskip

\noindent $(c) \Longrightarrow (d)$: Suppose that (c) holds, but that a left $ S$-module $M$ of the form described in (d) exists. That is, $M$ is cyclic and uniform, and is the extension of a 1-critical $ S$-module $A$ by an infinite dimensional simple $ S$-module. Notice that $A$ is faithful: for, if this is not the case $A$ is annihilated by $X^i \theta^j  S$ for some $i,j \geq 0$, and since this ideal satisfies the Artin-Rees property by \cite[Proposition 4.2.6]{McCR}, $(X^i \theta^j S)^tM = 0$ for some $t \geq 1$. This is impossible since $M$ has a faithful factor by hypothesis. Hence, by (the left-side version of) Theorem \ref{onecrit}(1)(b), $A$ contains a copy of $ S/ Sz$ for a type {\bf(C)} irreducible element $z$ of $ S$. Now $M$ is $\mathcal{X}$-torsion free, since its essential submodule $A$ is, and so $T \otimes_{ S} M$ is an essential extension of $T/Tz$ and maps onto $T/Tw$ for a type {\bf(B)} simple critical element $w$ of $ S$. Therefore, since $T \otimes_{ S} M$ has finite composition length and cannot contain any subfactor isomorphic to $T/T\theta$ thanks to Corollary \ref{heck}, an application of the long exact sequence of $\mathrm{Ext}$ shows that it must contain as a subfactor a non-split extension of a type {\bf(C)} simple module by a type {\bf(B)} one, contradicting (c).

\medskip

\noindent $(d) \Longrightarrow (c)$: Conversely, suppose that (d) holds, but that a left $T$-module $N$ exists as in (c), so $N = Ty$ is a non-split $T$-module of composition length two, with unique composition series
$$ 0 \longrightarrow F \longrightarrow N \longrightarrow P  \longrightarrow 0,$$
with $F$ and $P$ respectively type {\bf (C)} and type {\bf (B)} simple $T$-modules. More precisely, let $p \in P$ be such that the epimorphism $\pi:N \longrightarrow P$  maps $y$ to $p$, and by Theorem \ref{Bavula}(1),(2), 
$$ P \; = \;  Sp \;\cong \;   S/ Sb $$
for a type {\bf (B)} irreducible element $b$ of $ S$. By restriction, we obtain an exact sequence
\begin{equation}\label{key2} 0 \longrightarrow F_0 \longrightarrow  Sy \longrightarrow  S/ Sb \longrightarrow 0,
\end{equation}
where $F_0 := F \cap  Sy$. Since $N$ is a uniform $T$-module, it's easy to show that $Sy$ is a uniform $ S$-module, and since $b$ is type {\bf (B)} irreducible, $ S/ Sb$ is a simple $ S$-module. Consider now any non-zero element $u$ of $F_0$. Because $F$ is  a simple $T$-module,
\begin{equation}\label{heart}  T \otimes_{ S}  Su \; = \; Tu \; = \; F,
\end{equation}
so $cu = 0$ for a type {\bf (C)} irreducible element $c$ of $ S$. Thus $F_0$ is hull-similar to its type {\bf (C)} 1-critical submodule
$$C \; := \;  Su\; \cong \;  S/ Sc .$$
Now $F/ Su$ is $\mathcal{X}$-torsion by (\ref{heart}). Therefore every element of the finitely generated $S$-module $F_0/ Su$ is killed by both a power of $X$ and by an irreducible element of type {\bf (C)}. Filtering $F_0/Su$ by the elements killed by $X$, then the elements killed by $X^2$ and so on, we therefore see that $F_0/ Su$ is a finite dimensional $ S$-module. From the exact sequence (\ref{key2}) we obtain the exact sequence
\begin{equation}\label{key3} 0 \longrightarrow F_0/ Su \longrightarrow  Sy/ Su \longrightarrow  S/ Sb \longrightarrow 0.
\end{equation}
By Lemma \ref{torsion1} the $S$-injective hull of $F_0/ Su$ is locally finite dimensional. Since $S/Sb$ is an infinite dimensional simple $S$-module, $Sy/Su$ does not embed in $E_S(F_0/Su)$ and the sequence is split exact. We may thus choose an element $w \in Sy$ whose image \emph{modulo} $ Su$ generates a $ S$-module isomorphic to $ S/ Sb$. Set $W:=  Sw$. We have thus found a non-split exact sequence
\begin{equation}\label{key4} 0 \longrightarrow F_0 \cap W \longrightarrow W \longrightarrow  S/ Sb \longrightarrow 0,
\end{equation}
contradicting (d) and so completing the proof.

\medskip

\noindent $(c) \Longrightarrow (e)$: Assume (c), and let $b$ and $c$ be irreducibles in $ S$, respectively of types $(\mathbf{B})$ and $(\mathbf{C})$. Then $T/Tcb$ has composition length 2, so it is either semisimple or has essential socle. By (c), $T/Tcb$ splits; that is,
$$ T/Tcb \quad  = \quad A \oplus V, $$
where $A = Tb/Tcb \cong T/Tc$ and $V \cong T/Tb$ via the $T-$module projection $\psi: T/Tcb \longrightarrow V$ with $\mathrm{ker}\psi = A$. Set $\psi(1 + Tcb) := v$, so there exists $a \in A$ such that $1 + Tcb =
(a,v)$. Let $M = T(a,v)$.

Since $A = Ta$, there exists a type $(\mathbf{C})$ irreducible element $c'$, which we can choose to be in $ S$ by Lemma \ref{irreducible}, with
$$ \mathrm{Ann}_T(a) = Tc'.$$
Note that $c'$ is similar to $c$ by Theorem \ref{onecrit}(5), because
$$T/Tc' \cong A \cong  T/Tc.$$
Now $c'(a,v) = (0,v')$, and $v' \neq 0$ because $M = T(a,v)$ cannot be a factor of the simple module $T/Tc'$. Thus
$$\mathrm{Ann}_T(v') = Tb''$$
for an irreducible element $b''$, again chosen to be in $ S$ thanks to Lemma \ref{irreducible}, and which is similar to $b$ by Theorem \ref{Bavula}(3). Therefore
$$  Tb''c' \subseteq \mathrm{Ann}_T((a,v)) = \mathrm{Ann}_T(1 + Tcb) =
Tcb,$$
so that
$$Tb''c' = Tcb$$
since the factors by both these left ideals have composition length 2. By
Lemma \ref{intersect}(1) and Remark \ref{extend},
$$  Sb''c' = Tb''c' \cap  S = Tcb \cap  S =  Scb.$$
There is therefore a unit $u$ of $ S$ such that
$$u b''c' = cb.$$
Since $u= 1 + Xw$ for some $w \in  R$, $b' :=ub''$ is a type $(\mathbf{B})$ irreducible in $ S$, similar to $b''$ which is similar to $b$, and so (e) is proved.

\medskip

\noindent $(e) \Longrightarrow (c)$: Assume (e), and let $M$ be as in (c), namely a uniserial left $T-$module which is an extension of a type $(\mathbf{C})$ simple $T-$module $A$ by a type $(\mathbf{B})$ simple module. Being uniserial of length 2, $M$ is cyclic, say $M = Tm$. By hypothesis, $\mathrm{Ann}_T(m + A) = Tb$ for some irreducible type $(\mathbf{B})$ element, which we can choose to be in $ S$ by Lemma \ref{irreducible}. Since $M$ is not simple, $0 \neq bm \in A$, so $A = Tbm$ and there is a type $(\mathbf{C})$ irreducible element $c$ of $ S$ such that $c(bm) = 0$. Therefore $M$ is a factor of $T/Tcb$, and, comparing composition lengths, we see  that
$$ M \cong T/Tcb. $$
Now apply (e) to obtain irreducibles $b'$ and $c'$ in $ S$, respectively of types $(\mathbf{B})$ and $(\mathbf{C})$, with $cb = b'c'$. So
$$ b'c'm \quad = \quad cbm \quad = \quad 0,$$
but $c'm \neq 0$ as $M = Tm$ is not a simple $T-$module. Thus
$$ T(c'm) \cong T/Tb' $$
is a simple type $(\mathbf{B})$ $T-$submodule of $M$, contradicting the fact that $M$ is uniserial with type $(\mathbf{C})$ socle. Therefore no such uniserial module $M$ exists, and (c) is proved.
\end{proof}




\bigskip

\subsection{Proof of $(\diamond)$ for $\widehat{S}$}\label{outcome}

Our aim is to prove that $\widehat{S}=k[[X]][\theta;\alpha]$ satisfies $(\diamond)$ by showing that it satisfies the monoidal commutativity criterion of Theorem \ref{target}(e). For this, we need some additional information about the irreducibility of polynomials in the skew polynomial ring $\widehat{S}$, as provided by the next lemma. Note that in it we do not need to assume that $q$ is not a root of unity and the observation it contains seems to be of independent interest even when $q=1$, that is when $\widehat{S}$ is a commutative ring. We write $q:=\sum_{p\geq 0} q_pX^p$, where $q_p \in k$ and $q_0\neq 0$ as $q$ is a unit of $k[[X]]$. Consider $z=\sum_{i=0}^m z_i\theta^i\in \widehat{S}$
with $z_i \in k[[X]]$ for all $i$. If $X$ divides all the $z_i$ except for $z_0$ and $X^2$ does not divide $z_m$ then the generalised Eisenstein criterion, \cite[\S1, Theorem]{Kovacic}, tells us that $z$ is irreducible. The following lemma is in the direction of a converse to this. Its formulation is rather technical due to the requirements of its proof, but the underlying idea is quite simple. It shows that if there is an integer $n$ with $0< n< m$ such that $X$ does not divide $z_n$ and $X$ divides $z_i$ for all $i>n$, then $z$ is necessarily reducible.

\begin{lemma}\label{notirr} Retain the notation and definition of $\widehat{S}$ as above. Let $1\leq n< m$ and let
$$z=\sum_{i=0}^nf_i\theta^i+\sum_{i=n+1}^m g_i\theta^i\in \widehat{S}, $$
where $f_i, \, g_i \in k[[X]]$ and $g_m \neq 0$. Suppose that
\begin{enumerate}
\item[$(\bullet)$] $f_n$ is invertible (that is, $X\nmid f_n$), and
\item[$(\bullet)$] $g_i$ is not invertible (that is, $g_i \in Xk[[X]]$), for all $i = n+1, \ldots , m$.
\end{enumerate}
Then  $z$ is divisible on the right by a monic polynomial in $\theta$ of degree $n$. In particular, $z$ is a  reducible polynomial.
\end{lemma}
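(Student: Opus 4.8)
The plan is to divide $z$ on the right by a monic degree-$n$ polynomial constructed iteratively, peeling off the top coefficients one at a time, and to show that the process terminates with zero remainder. Write $z = h + g$ where $h := \sum_{i=0}^n f_i \theta^i$ has invertible leading coefficient $f_n$ (so after multiplying on the left by $f_n^{-1}$ we may as well assume $h$ is monic of degree $n$), and $g := \sum_{i=n+1}^m g_i \theta^i$ has all coefficients in $Xk[[X]]$. The idea is that $h$ already looks like the sought-after right divisor up to a correction that is "small" in the $X$-adic sense, and we improve it by successive approximation. Concretely, I would look for $p = \theta^n + \sum_{i<n} p_i\theta^i \in \widehat S$, monic of degree $n$, and $\gamma \in \widehat S$ with $z = \gamma p$; comparing degrees forces $\deg_\theta \gamma = m-n$ and the leading coefficient of $\gamma$ to be $g_m$.

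The key step is the $X$-adic iteration. Starting from the approximation $p^{(0)} := \theta^n + f_n^{-1}\sum_{i<n} f_i \theta^i$ (or even just $\theta^n$ plus lower terms matching $h$), right division of $z$ by $p^{(0)}$ in $\widehat S$ — which is possible since $p^{(0)}$ is monic in $\theta$ and $\widehat S = k[[X]][\theta;\alpha]$ admits a right division algorithm against monic polynomials — yields $z = \gamma^{(0)} p^{(0)} + r^{(0)}$ with $\deg_\theta r^{(0)} < n$. I would show $r^{(0)}$ has all coefficients in $Xk[[X]]$: this is exactly where the hypothesis that all $g_i$ ($i>n$) are non-invertible is used, together with the fact that reducing everything modulo $X$ kills $g$ and turns $z$ into $\bar h$, which is already divisible by $\bar{p^{(0)}}$ with zero remainder in $k[\theta;\bar\alpha]=k[\theta]$. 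Then one corrects: replace $p^{(0)}$ by $p^{(1)} := p^{(0)} + (\text{a degree} <n \text{ term with coefficients in } Xk[[X]])$ chosen to absorb $r^{(0)}$ to higher $X$-order, and repeat. At stage $j$ the remainder lies in $X^{j}k[[X]][\theta]$, the corrections to $p$ lie in $X^{j}k[[X]][\theta]$, and the sequence $p^{(j)}$ converges $X$-adically (each coefficient is a Cauchy sequence in the complete ring $k[[X]]$) to a monic $p\in\widehat S$ of degree $n$ with $z = \gamma p$ for the corresponding limit $\gamma$. Since $\deg_\theta p = n$ with $1\le n<m$, both $p$ and $\gamma$ are non-units of $\widehat S$ (a unit of $\widehat S$ lies in $k[[X]]\setminus Xk[[X]]$), so $z$ is reducible.

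The main obstacle is controlling the non-commutative bookkeeping in the iteration: because $\theta X = qX\theta$ with $q = \sum q_p X^p$, multiplying a correction term of $X$-order $j$ by $\gamma^{(j)}$ does not simply shift $X$-degrees cleanly, and one must verify that the induced change in the remainder is genuinely of strictly higher $X$-order and does not disturb the lower-order terms already fixed. This requires a careful estimate showing that the map "correction term $\delta \mapsto$ resulting change in remainder" is, modulo $X^{j+1}$, an invertible (indeed triangular, over $k$) linear operator on the relevant finite-dimensional space of degree-$<n$ coefficient vectors, so that each $r^{(j)}$ can in fact be cancelled to order $X^{j+1}$. The leading-order version of this operator is governed by $\bar p^{(0)} = p^{(0)} \bmod X$ and the fact that its $k[\theta]$-cofactor $\bar\gamma$ has nonzero leading coefficient $\bar g_m$; establishing its invertibility (equivalently, that $\bar\gamma$ and $\bar p^{(0)}$ generate a suitable quotient with no obstruction) is the technical heart, and is precisely why the lemma's hypotheses are stated the way they are. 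Everything else — convergence in the complete ring $k[[X]]$, identifying the limit, and reading off reducibility from the $\theta$-degree — is routine.
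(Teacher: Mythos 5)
Your proposal is sound and does yield the lemma, but by a route packaged differently from the paper's. The paper makes the Ansatz that the right divisor is $h_0+h_1\theta+\cdots+h_{n-1}\theta^{n-1}-\theta^n$ with unknown $h_i\in k[[X]]$, reduces $\theta^{n+\ell}$ modulo the left ideal generated by this element to get explicit expressions $y_{i,\ell}$ in the $\alpha^r(h_j)$, and converts divisibility of $z$ into the system $f_i+h_i+\sum_{j=n+1}^m g_j y_{i,j-n}=0$ for $0\le i\le n-1$; since every $g_j\in Xk[[X]]$, this system is triangular in the power-series coefficients $h_{i,p}$ and is solved outright by recursion on $p$, with no limiting process. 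You instead run a Hensel-type iteration: divide $f_n^{-1}z$ on the right by the monic approximation $p^{(0)}$ given by its degree-$\le n$ truncation, note that the remainder is divisible by $X$ because modulo $X$ the element $f_n^{-1}z$ is itself monic of degree $n$, correct the divisor, and pass to the limit using completeness of $k[[X]]$. Your packaging lets the division algorithm absorb the noncommutative bookkeeping that the paper tracks through the $y_{i,\ell}$, and it makes transparent where completeness enters (consistent with the paper's example showing the statement fails over $k[X]_{\langle X\rangle}$); the paper's version gives an explicit recursion for the coefficients of the divisor and needs no convergence argument.

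One point in your sketch needs repair, though it does not derail the argument. The invertibility of the leading-order correction operator is not governed by ``the nonzero leading coefficient $\bar g_m$ of $\bar\gamma$'': by hypothesis $g_m\in Xk[[X]]$, so $\bar g_m=0$. What actually makes the step work is that the cofactor $\gamma^{(j)}$ in $z=\gamma^{(j)}p^{(j)}+r^{(j)}$ is congruent to $1$ modulo $X$ (after normalising $f_n=1$), because modulo $X$ the remainder vanishes and both $z$ and $p^{(j)}$ are monic of degree $n$. Hence for a correction $\delta$ of degree $<n$ with coefficients in $X^{j+1}k[[X]]$ one has $\gamma^{(j)}\delta\equiv\delta \pmod{X^{j+2}}$, and by uniqueness of division by a monic polynomial in $\widehat{S}/X^{j+2}\widehat{S}$ the remainder of $\gamma^{(j)}\delta$ on right division by $p^{(j)}+\delta$ is congruent to $\delta$ there; so the induced change in the remainder is $-\delta$ to leading order, i.e.\ the operator you feared is, up to sign, the identity. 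Taking $\delta=r^{(j)}$ at each stage therefore completes your iteration, and your degree count then shows that both factors are non-units, as required.
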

\begin{proof}
Let $z\in \widehat{S}$ be as above. As $ f_n$ is invertible in $\widehat{S}$, we can without loss of generality replace $z$ by $f_n^{-1}z$ and thus assume henceforth that $f_n=1$. We claim that there exist $h_0,\ldots, h_{n-1}\in k[[X]]$ such that
\begin{equation}\label{aim} z\in \widehat{S}(h_0+h_1\theta+\cdots+h_{n-1}\theta^{n-1}-\theta^n).
\end{equation}
Note that the lemma is immediate from (\ref{aim}).

Let $h_0, h_1, \ldots , h_{n-1}$ be elements of $k[[X]]$ which remain to be determined, and denote the left ideal $\widehat{S}(h_0+h_1\theta+\ldots+h_{n-1}\theta^{n-1}-\theta^n)$ of $\widehat{S}$ by $I$. Throughout this proof we denote congruence (mod $I$) by $\equiv$. Thus we are aiming to choose elements $h_i$ of $k[[X]]$ such that
\begin{equation}\label{done} z \in I.
\end{equation}
By definition of $I$,
\begin{equation}\label{key1}
\theta^n \equiv h_0+h_1\theta+\cdots+h_{n-1}\theta^{n-1}.
\end{equation}
Multiplying (\ref{key1}) on the left by $\theta$,  we deduce that
$$\begin{aligned} \theta^{n+1} \; &\equiv& \; \theta\left(\sum_{i=0}^{n-1} h_i \theta^i \right) \qquad \qquad \qquad\\
&\equiv& \; \sum_{i=0}^{n-2} \alpha(h_i)\theta^{i+1} \;+\; \alpha(h_{n-1})\theta^n \qquad \qquad\\
&\equiv& \; \sum_{j=1}^{n-1}\alpha(h_{j-1})\theta^j \;+\; \alpha(h_{n-1})\left(\sum_{j=0}^{n-1} h_{j}\theta^{j}\right)\\
&\equiv& \; \alpha(h_{n-1})h_0 \;+ \;\sum_{j=1}^{n-1}\left( \alpha(h_{j-1}) + \alpha(h_{n-1})h_j \right)\theta^j.
\end{aligned}$$
This yields by induction: for all $\ell \geq 1$,
\begin{equation}\label{hot}
\theta^{n+\ell} \; \equiv \; \sum_{i=0}^{n-1} y_{i, \ell} \theta^i ,
\end{equation}
where for each $i$ and $\ell$, $y_{i,\ell}\in k[[X]]$ is a $k$-linear combination of products of elements of the set $\{\alpha^r(h_j):j\in\{0,\ldots,n-1\}, r\in\{0,\ldots,\ell\}\}$.

Write $h_i=\sum_{p\geq 0} h_{i,p}X^p$ for $h_{i,p}\in k$ and similarly present the elements $f_i, g_j$ and $ y_{i,j-n}$ of $k[[X]]$. Note that $\alpha(h_j)=\sum_p h_{j,p}(qX)^p$, for any $r\in\N$, $\alpha^r (h_j)=\sum_p h_{j,p}(\alpha^r (X))^p$ and that $\alpha^r(X)=N_r^{\alpha}(q)X$, so $\alpha^r(h_j)_p$ is a sum of products of $h_{j,p*}$ for $p*\leq p$ with some coefficients of $q$, namely $q_p*$ for $p*\leq p$. This means that, for all $r,p \in \mathbb{N}$ and $j \in \{0, \ldots , n-1\}$,
\begin{equation}\label{key}
\mbox{$\alpha^r(h_j)_p$ \emph{is an explicit} $k$-\emph{linear combination of} $h_{j,p*}$ \emph{for} $p*\leq p$}.
\end{equation}
(In case $q\in k$, we actually have $\alpha^r(h_j)_p=q^rh_{j,p}$, a $k$-linear combination of $h_{j,p}$.)

Since $f_n = 1$, from (\ref{key1}) and (\ref{hot}) we have
\begin{equation}\label{division}
z\equiv \sum_{i=0}^{n-1}f_i\theta^i+\sum_{i=0}^{n-1}h_i\theta^i+
\sum_{j=n+1}^{m}\sum_{i=0}^{n-1} g_{j}y_{i,j-n}\theta^i.
\end{equation}
So $z\in \widehat{S}(h_0+h_1\theta+\ldots+h_{n-1}\theta^{n-1}-\theta^n)$
if and only if  for each $i\in\{0,\ldots, n-1\}$
\begin{equation}\label{system of quations}
f_i+h_i+\sum_{j=n+1}^{m} g_{j}y_{i,j-n}=0
\end{equation}
in $k[[X]]$.

We claim that the system of equations (\ref{system of quations}) in the unknowns $h_0,\ldots, h_{n-1}$ does indeed have a solution in $k[[X]]$.

Recall that $X\mid g_j$ for all $j\in\{n+1,\ldots,m\}$. Therefore, looking at
coefficients of terms of degree $p = 0$ and $p=1$ in $X$ in (\ref{system of quations}) , we get
\begin{eqnarray}
&h_{i,0}=-f_{i,0}, & \forall i\in\{0,\ldots,n-1\} \\
&h_{i,1}=-f_{i,1}-\sum_{j=n+1}^{m} g_{j,1}y_{i,j-n,0}, &\forall i\in\{0,\ldots,n-1\}
\end{eqnarray}
where, by (\ref{hot}) and (\ref{key}), $y_{i,j-n,0}$ is a $k$-linear combination of products of elements from the set
$$ \{\alpha^r(h_{j,0}):j\in\{0,\ldots,n-1\}, r\in\{0,\ldots,m-n\}\}.$$
Assume now that $t$ is a positive integer and that $h_{i,p}$ are known for all $i\in\{0,\ldots,n-1\}$ and $p\leq t$. By (\ref{system of quations}), we require, for all $i\in\{0,\ldots,n-1\}$,
\begin{equation}
h_{i,t+1}=-f_{i,t+1}-\sum_{j=n+1}^{m}\sum_{s=0}^{t} g_{j,(t+1-s)}y_{i,j-n,s}
\end{equation}
where, by (\ref{hot}) and (\ref{key}), for each $i\in \{0,\ldots,n-1\}$ and $s\in \{0,\ldots, t\}$, $y_{i,j-n,s}$ is a $k$-linear combination of products of elements from
$$ \{\alpha^r(h_{j,\ell}):j\in\{0,\ldots,n-1\}, r\in\{0,\dots,i\}, \ell\in\{0,\ldots,s\}\}.$$
So each $h_{i,t+1}$ exists, and hence $h_0,\ldots, h_{n-1}$ exist also.
\end{proof}

The following example shows that the above lemma does not hold when the base ring is replaced by $k[X]_{\langle X \rangle}$.

\begin{ex} \normalfont For $q\in k^*$, let $S=k[X]_{\langle X \rangle}[\theta; \alpha]$ where $\alpha(X)=qX$. The element $r=1+X+\theta+X\theta^2\in S$ is irreducible in $S$. To check this note that if $a,b\in k[X] $ are such that $b\notin <X>$, then  $$r=\alpha(b^{-1})b^{-1}[bX\theta-X\alpha(a)+\alpha(b)](a+b\theta)+1+X-b^{-1}a+Xb^{-1}\alpha(b^{-1})\alpha(a)a.$$So $r\in S(a+b\theta)$ if and only if $\alpha(b)(b-a)=-X(\alpha(b)b+\alpha(a)a)$. Comparing the degrees on the RHS and LHS of the last equation we conclude that $r$ is not divisible on the right by a polynomial of the form $a+b\theta$ for any $a,b\in k[X] $ with $b\notin <X>$.

Making use of the above for $S^{op}=k[X]_{\langle X \rangle}[\theta; \alpha^{-1}]$ we see that also, with the same constraints on $a$ and $b$, $r\notin (a+b\theta)S$, proving that $r$ is irreducible.
\end{ex}

A very useful consequence of Lemma \ref{notirr} provides a canonical form
for each type $(\mathbf{C})$ similarity class of irreducible elements of $\widehat{S}$:

\begin{corollary}\label{typeC} Let $c$ be a type $(\mathbf{C})$ irreducible element of $\widehat{S}$, of degree $n \geq 1$ as a polynomial in $\theta$. Then there is a unit $u$ of $\widehat{S}$ such that $\hat{c} := uc$ is an irreducible element of $\widehat{S}$, so $\hat{c}$ is similar to $c$, with $\hat{c}$ having form
$$ \hat{c} \quad = \quad h_0 + \sum_{i=1}^{n-1}h_i \theta^i + \theta^n, $$
when $n > 1$, with $h_0 \neq 0$ and $h_i \in k[[X]]$ for all $i$; and $\hat{c}$ having form
$$ \hat{c} \quad = \quad h_0 + \theta $$
with $0 \neq h_0 \in k[[X]]$  when $n = 1$.
\end{corollary}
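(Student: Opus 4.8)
The plan is to deduce Corollary~\ref{typeC} directly from Lemma~\ref{notirr} together with the taxonomy of type $(\mathbf{C})$ irreducibles established in $\S$\ref{taxonomy}. Recall that a type $(\mathbf{C})$ irreducible $c \in \widehat{S}$ has, after normalising by a unit of $R = k[[X]]$, the shape $c = f + Xs$ with $f \in k[\theta]\setminus k$ monic and $c \neq \theta$; write $c = \sum_{i=0}^n z_i \theta^i$ with $z_i \in k[[X]]$ and $n \geq 1$ the $\theta$-degree.

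First I would analyse the leading behaviour in $\theta$. Since $f$ is monic of degree $n$ and $c = f + Xs$, the leading coefficient $z_n$ satisfies $z_n \equiv 1 \pmod{X}$, so $z_n$ is a unit of $k[[X]]$; set $u_1 := z_n^{-1}$, a unit of $\widehat{S}$, and replace $c$ by $u_1 c$ so that we may assume $z_n = 1$, i.e.\ $c$ is monic of degree $n$ in $\theta$. When $n = 1$ this already gives $c = z_0 + \theta$; here $z_0 \neq 0$, since otherwise $c = \theta$, which is excluded for type $(\mathbf{C})$, and we are done with $h_0 := z_0$.

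For $n > 1$ the key step is to rule out any ``gap'': I claim that $X \mid z_i$ for $\emph{all}$ $i$ with $1 \leq i \leq n-1$ is \emph{not} forced, but that whenever it fails we can exploit Lemma~\ref{notirr}. Precisely, suppose there is some index $i \in \{1, \ldots, n-1\}$ with $X \nmid z_i$, and let $n'$ be the largest such index; then $X \nmid z_{n'}$ while $X \mid z_j$ for all $j$ with $n' < j < n$, and $z_n = 1$ is of course invertible. This is exactly the hypothesis of Lemma~\ref{notirr} with the roles of $n$ and $m$ there played by $n'$ and $n$ (noting $1 \leq n' < n$ and the top coefficient nonzero). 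Hence Lemma~\ref{notirr} would force $c$ to be reducible, contradicting the assumption that $c$ is irreducible. Therefore no such index exists, i.e.\ $X \mid z_i$ for every $i$ with $1 \leq i \leq n-1$ --- wait, this is actually \emph{stronger} than what the corollary asserts, so instead the correct reading is that we need \emph{no} constraint on $h_1, \ldots, h_{n-1}$ beyond lying in $k[[X]]$, and the only point requiring argument is that $h_0 = z_0 \neq 0$. Indeed, if $z_0 = 0$ then $\theta$ divides $c$ on the right (as $c$ is monic in $\theta$ with zero constant term, a direct division algorithm in $\widehat{S}$ expresses $c = c' \theta$), so $c$ would be reducible unless $c = \theta$, which is excluded; hence $h_0 := z_0 \neq 0$. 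Taking $h_i := z_i$ for $i = 0, \ldots, n-1$ and $u := u_1$ (composed with the initial unit from $R$) gives the stated form $\hat{c} = uc = h_0 + \sum_{i=1}^{n-1} h_i \theta^i + \theta^n$, and $\hat{c}$ is similar to $c$ by Theorem~\ref{Jacobson}(2) since it differs from $c$ by a unit factor on the left.

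The main obstacle I anticipate is purely bookkeeping: matching the index conventions of Lemma~\ref{notirr} (where the "bad" exponent $n$ sits strictly below the top degree $m$ and one conjugates the putative factor's coefficients by powers of $\alpha$) to the present situation, and making sure the normalising unit is genuinely a unit of $\widehat{S}$ at each stage (units of $\widehat{S}$ are exactly the units of $R$, i.e.\ power series with nonzero constant term, since $\widehat{S}$ is a domain with $\widehat{S}/X\widehat{S} \cong k[\theta]$ having no $\theta$-dependent units). Once the leading coefficient is normalised to $1$ everything else is a short contrapositive application of Lemma~\ref{notirr}, so there is no real analytic difficulty; the content is entirely in Lemma~\ref{notirr} itself.
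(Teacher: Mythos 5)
There is a genuine gap, and it sits exactly where the real content of the corollary lies. You assert at the outset that, because the type $(\mathbf{C})$ normal form is $c = f + Xs$ with $f \in k[\theta]$ monic, the leading coefficient $z_n$ of $c$ (as a polynomial in $\theta$ of degree $n$) satisfies $z_n \equiv 1 \pmod{X}$. This does not follow: in the decomposition $c = f + Xs$ coming from $\widehat{S}/X\widehat{S} \cong k[\theta]$, the polynomial $f$ is the reduction of $c$ modulo $X$, and its degree is the largest index $i$ with $X \nmid z_i$ --- which may well be strictly smaller than $n$, since $\deg_\theta(Xs)$ can exceed $\deg f$. Whether $X \nmid z_n$ is precisely the nontrivial point the corollary establishes, and it is the one place where Lemma \ref{notirr} is genuinely needed. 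The paper's proof proceeds by letting $m$ be the largest index with $X \nmid r_m$, showing $m > 0$ (else $c$ would be type $(\mathbf{B})$ or reducible), and then observing that if $m < n$ the hypotheses of Lemma \ref{notirr} hold (invertible coefficient at $m$, all coefficients from $m+1$ up to and including the top one divisible by $X$), forcing $c$ reducible --- a contradiction; hence $m = n$, the top coefficient is a unit, and one may normalise to the monic form. Your proposal never supplies this step.

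Moreover, the one place where you do try to invoke Lemma \ref{notirr} is a misapplication: having (without justification) normalised $z_n = 1$, you apply the lemma with an intermediate index $n'$ at which the coefficient is a unit and the coefficients strictly between $n'$ and $n$ divisible by $X$. But the lemma requires \emph{all} coefficients above the invertible one, including the top coefficient $g_m$, to be non-invertible; with $z_n = 1$ its hypotheses fail, so no contradiction arises --- and indeed none should, since (as you yourself notice mid-argument and then retract) the corollary places no divisibility constraints on $h_1, \ldots, h_{n-1}$. After the retraction, what remains of your argument is only the easy parts ($h_0 \neq 0$ via right divisibility by $\theta$, and similarity of $uc$ to $c$), so the essential claim that the leading coefficient is a unit of $k[[X]]$ is left unproved. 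To repair the proof, drop the initial normalisation, take $m$ maximal with $X \nmid z_m$, and run the contrapositive of Lemma \ref{notirr} against the assumption $m < n$, exactly as in the paper.
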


\begin{proof} Let $c = \sum_{i=0}^n r_i \theta^i$, where $r_i \in k[[X]]$ for all $i$ and $r_n \neq 0$. Then
\begin{equation}\label{nzero} r_0 \neq 0,
\end{equation}
since otherwise $c = u\theta$ for some unit $u$ of $\widehat{S}$ as $c$
is irreducible, so that $c$ has type $(\mathbf{A})$, which is a contradiction. Moreover
\begin{equation}\label{nnzero} n > 0,
\end{equation}
because if $n=0$ then $c \in k[[X]]$, which is again a contradiction to
$c$ being type $(\mathbf{C})$.

There exists at least one $i$ for which
$X \nmid r_i$, because otherwise, as $c$ is irreducible,  $c = Xu'$ for
a unit $u'$ of $\widehat{S}$, once more contradicting the fact that $c$ is type $(\mathbf{C})$. Let $m$ be the greatest value of $i$ such that
\begin{equation}\label{nmid}  X \nmid  r_m.
\end{equation}
If $m= 0$ then, by (\ref{nzero}), $r_0$ is a unit in $\widehat{S}$ and so
$$ \widehat{c} \quad := \quad r_0^{-1}c \quad = \quad 1 + X\tau $$
for some $\tau \in \widehat{S}$, so that $c$ is type $(\mathbf{B})$, a contradiction. So
\begin{equation}\label{great} m > 0.
\end{equation}
Suppose that $m < n$. Then
\begin{equation}\label{bad} c \quad =  \sum_{i=0}^m r_i\theta^i + \sum_{i=m+1}^n r_i \theta^i, \qquad r_m \notin \langle X \rangle,\; r_i \in \langle X \rangle \forall i > m,\; r_n \neq 0.
\end{equation}
But then, noting (\ref{great}), Lemma \ref{notirr} applies to the expression (\ref{bad}) for $c$, and tells us that $c$ is reducible, a contradiction. Therefore $m=n$, so that $r_n$ is a unit in $k[[X]]$. That is, we can define
$$\hat{c} := r_n^{-1} c ,$$
so that
\begin{equation}\label{form} \hat{c} \quad = \quad  \sum_{i=0}^{n-1}h_i \theta^i \, + \, \theta^n,
\end{equation}
with all $h_i \in k[[X]]$, $h_0 \neq 0$ and $n >0$. Finally, by $\S$\ref{lrPID}, $\hat{c}$ is similar to $c$.
\end{proof}

\bigskip

We now apply Lemma \ref{notirr} and Corollary \ref{typeC} to prove that the monoidal commutativity condition of Theorem \ref{target}(e) is satisfied, hence deducing that $\widehat{S}$ satisfies $(\diamond)$.

\begin{proposition}\label{yippee} Let $b$ and $c$ be irreducible elements
of $\widehat{S}$, respectively of type $(\mathbf{B})$ and $(\mathbf{C})$.
Then there exist irreducible elements $b'$ and $c'$ of $\widehat{S}$, respectively similar to $b$ and $c$, such that
$$ cb \quad = \quad b'c'.$$
\end{proposition}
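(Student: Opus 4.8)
The plan is to reduce the statement to a concrete computation inside $\widehat{S}$ by replacing $c$ with its canonical form from Corollary~\ref{typeC} and $b$ with its normalised type $(\mathbf{B})$ form $b = 1 + X\sigma$ for some $\sigma \in \widehat{S}$. Since the conclusion $cb = b'c'$ with $b'$ similar to $b$ and $c'$ similar to $c$ is insensitive to multiplying $b$ and $c$ on the left by units of $\widehat{S}$ (such a multiplication just passes through to $b'$, preserving similarity by $\S$\ref{lrPID}), I may assume $c = \hat{c} = h_0 + \sum_{i=1}^{n-1} h_i\theta^i + \theta^n$ with $h_0 \neq 0$, $h_i \in k[[X]]$, as in Corollary~\ref{typeC} (and the degenerate shape $h_0 + \theta$ when $n = 1$). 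Then $cb = cb$ is an explicit element of $\widehat{S}$, and the goal is to read off from its coefficients a factorisation of the required shape.

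The key observation is a degree/valuation bookkeeping argument. Writing $b = 1 + X\sigma$, we have $cb = c + cX\sigma$; since $X$ is normal, $cX\sigma = X\alpha^{-1}(\cdots)$-type terms, so every coefficient (in $k[[X]]$) of $cb$ other than the top one agrees with that of $c$ modulo $X$, while the leading coefficient in $\theta$ of $cb$, say of degree $m := \deg_\theta(cb)$, need not be a unit. Concretely $cb$ has $\theta$-degree $n + \deg_\theta(b)$, its coefficients of $\theta^0, \dots, \theta^{n-1}$ are congruent mod $X$ to $h_0, \dots, h_{n-1}$ (so in particular the $\theta^0$-coefficient is a unit times... no — $h_0$ need not be a unit, but it is nonzero, and what matters is that the coefficient of $\theta^n$ in $cb$ is $\equiv 1 \pmod X$, hence a unit). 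This is exactly the hypothesis of Lemma~\ref{notirr} read from the top: there is an index $n$ with $0 < n < m = \deg_\theta(cb)$ such that the $\theta^n$-coefficient of $cb$ is invertible and all higher coefficients lie in $X k[[X]]$ — the latter because the coefficients of $\theta^{n+1}, \dots, \theta^{m}$ in $cb$ come entirely from $cX\sigma$, hence are divisible by $X$. Lemma~\ref{notirr} therefore produces a monic degree-$n$ polynomial $c'' \in \widehat{S}$ in $\theta$ with $cb = b'' c''$ for some $b'' \in \widehat{S}$.

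It then remains to verify the similarity claims and the types. Comparing $\theta$-degrees, $\deg_\theta b'' = m - n = \deg_\theta b$. Since $\widehat{S}/\widehat{S}\, cb \cong \widehat{S}/\widehat{S}\, c'' b''$-type reasoning (or rather: localising at $\mathcal{X}$ and using that $T/Tcb$ has composition length $2$ with a type $(\mathbf{B})$ factor $T/Tb$ and a type $(\mathbf{C})$ factor $T/Tc$, together with the fact that $c''$ is monic in $\theta$ hence has a type $(\mathbf{C})$ or $(\mathbf{A})$ associate, while $c'' \neq \theta^n$ for $n>1$ because $h_0 \neq 0$ survives) forces $c''$ to be type $(\mathbf{C})$ and similar to $c$, and $b''$ to be type $(\mathbf{B})$ and similar to $b$. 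Applying Corollary~\ref{typeC} once more, or a unit adjustment $b' := u b''$, $c' := c''$ as in the proof of Theorem~\ref{target}$(e)$, gives irreducible $b', c'$ of the stated types with $cb = b'c'$.

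\textbf{Main obstacle.} The delicate point is confirming that the cofactors $b''$ and $c''$ produced by Lemma~\ref{notirr} really are \emph{irreducible} and of the \emph{correct types}, and that they are \emph{similar} to $b$ and $c$ rather than merely having the right degrees. The clean way to handle this is to pass to $T = \widehat{S}\langle X^{-1}\rangle$: there $T/Tcb$ has composition length $2$ by Theorems~\ref{Bavula} and \ref{onecrit}, the factorisation $cb = b''c''$ in $\widehat{S}$ gives a submodule $Tc''/Tcb$ of $T/Tcb$, and a length count pins down $Tc'' = Tc$... no, pins down that $T/Tc'' \cong T/Tc$ and $T/Tb'' \cong T/Tb$ as $T$-modules, whence similarity in $T$ via Theorem~\ref{Jacobson}(2), and then irreducibility in $\widehat{S}$ via Lemma~\ref{irreducible}(1) after normalising out any power of $X$. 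Getting this length argument to run requires checking that $c''b''$ and $cb$ generate the same left ideal of $T$, which follows because $c''$ is monic in $\theta$ (so $X \nmid c''$) and Lemma~\ref{intersect}(1) applies.
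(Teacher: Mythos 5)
Your proposal is correct and is essentially the paper's own argument: reduce $c$ to the monic canonical form of Corollary \ref{typeC}, write $b=1+Xs$ (with $\deg_\theta s\geq 1$ by the definition of type $(\mathbf{B})$), verify that $cb$ satisfies the hypotheses of Lemma \ref{notirr} (unit coefficient at $\theta^{n}$, all higher coefficients divisible by $X$) to obtain $cb=b'c'$ with $c'$ monic of degree $n$, and then settle irreducibility, types and similarity by a length-two Jordan--H\"older comparison of $T/Tcb=T/Tb'c'$, exactly as in the paper. The only wobbly spot is your parenthetical excluding a type $(\mathbf{A})$ cofactor -- the case actually needing attention is $n=1$, $c'=\theta$, which is ruled out either because the $\theta^{0}$-coefficient of $cb$ is a unit multiple of $h_{0}\neq 0$ or because neither composition factor of $T/Tcb$ is isomorphic to $T/T\theta$ -- but this is no less terse than the paper's own treatment, and your closing worry about $Tcb=Tb'c'$ is vacuous since the factorisation is an equality already in $\widehat{S}$.
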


\begin{proof} Note that if $c$ is replaced by $\hat{c} := uc$ for a unit $u$ of $\widehat{S}$, and we prove that
$$ \hat{c}b \quad = \quad  \hat{b}c' $$
with $\hat{b}$ and $c'$ irreducibles respectively of types $(\mathbf{B})$
and $(\mathbf{C})$, then
$$ cb = u^{-1}\hat{c}b = (u^{-1}\hat{b})c' = b'c',$$
with $b'$ and $c'$ irreducibles in $\widehat{S}$ respectively of types $(\mathbf{B})$ and $(\mathbf{C})$.

Thus Corollary \ref{typeC} allows us to assume that $c$ has the form
\begin{equation}\label{form} c \quad = \quad  \sum_{i=0}^{n}h_i \theta^i \,
\end{equation}
with $h_i \in k[[X]], \, h_0 \neq 0$, $n >0$ and $h_n=1$.

Let $b = 1 + Xs$, where $s = \sum_{j=0}^{\ell}s_j \theta^j$ with $s_j \in k[[X]]$ for all $j$. Since $b$ is irreducible and so not invertible in $\widehat{S}$,
\begin{equation}\label{large} \ell \; > \; 0.
\end{equation}
Thus, using (\ref{form}),
\begin{align*}  cb \quad &=&  &\left(\sum_{i=0}^{n}h_i \theta^i \right)\left(1 + X(\sum_{j=0}^{\ell}s_j \theta^j)\right)\\
&=&  &\sum_{i=0}^n h_i \theta^i + X\left(\sum_{i=0}^n\sum_{j=0}^{\ell}q^ih_i\alpha^i(s_j)\theta^{i+j}\right).
\end{align*}
In the above expression for $cb$, observe that $n > 0$ and $h_n = 1$ by
(\ref{nnzero}) and (\ref{form}), while the highest power of $\theta$ occurring is $n + \ell$, with $n + \ell > n$ by (\ref{large}). Since the coefficient of $\theta^{n + \ell}$ is divisible by $X$, the hypotheses of Lemma \ref{notirr} are satisfied by $cb$. Therefore we can conclude that $cb$ is divisible on the right by a monic polynomial $c'$ of degree  $n$; that is,
\begin{equation}\label{atlast} cb \quad = \quad b'c'.
\end{equation}
Comparing degrees on the right and left of (\ref{atlast}),
$$ \mathrm{deg}_{\theta}(b') = \ell > 0 $$
by (\ref{large}), so that neither $c'$ nor $b'$ are units in $\widehat{S}$. Comparing the two factorisations provided by (\ref{atlast}) in the PID
$T$, both must have length 2 by the Jordan-Holder theorem, since $T/Tcb =
T/Tb'c'$. Thus, both $b'$ and $c'$ are irreducible in $T$ and hence, by Lemma \ref{irreducible}, in $\widehat{S}$. Moreover, since $c'$ is monic in $\theta$ of degree $n > 0$, it cannot be expressed as $c' = (1 + Xd)u$ for any $d \in \widehat{S}$ and unit $u \in k[[X]]$. Therefore $c'$ has
type $(\mathbf{C})$, and so, again applying the Jordan-Holder theorem to $T/Tcb = T/Tb'c'$, $b'$ has type $(\mathbf{B})$. This then forces $b$ to be similar to $b'$ and $c$ to $c'$, as required.
\end{proof}

The version of the following corollary for the right $(\diamond)$ property is immediate from the equivalence of ($a$) and ($e$) of Theorem \ref{target} coupled with Proposition \ref{yippee}. Since the arguments and results of this paper can be repeated with ``right'' and ``left'' swapped throughout, left $(\diamond)$ follows in a parallel fashion. 
Note that we only need to discuss here the proof of the ``difficult case'' where $\alpha$ is not of finite order, since $k[[X]][\theta; \alpha]$ is a finite module over its centre otherwise.

\begin{corollary}\label{done} Let $k$ be a field and $\alpha$ an arbitrary $k$-automorphism of $k[[X]]$. Then $\widehat{S}=k[[X]][\theta; \alpha]$ satisfies $(\diamond)$.
\end{corollary}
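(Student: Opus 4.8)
The plan is to deduce Corollary \ref{done} from Proposition \ref{yippee} via the chain of equivalences already assembled in the paper. First I would dispose of the finite-order case: if $|\alpha| < \infty$ then $\widehat{S}=k[[X]][\theta;\alpha]$ is a finitely generated module over its centre (which contains $\theta^{|\alpha|}$ together with the $\alpha$-invariant subring of $k[[X]]$), hence is a Noetherian PI ring of finite global dimension over a commutative Noetherian base, and such rings are well known to satisfy $(\diamond)$ — indeed this is the ``easy and well known'' case mentioned in the abstract. So I may assume $|\alpha| = \infty$, which places $\widehat{S}$ precisely in the setting of $\S$\ref{rep} and $\S$\ref{notation} with $R = k[[X]]$, a complete discrete valuation ring with residue field $k$ and $\alpha(X) = qX$ for a unit $q$ of $k[[X]]$.

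Next I would invoke Theorem \ref{target}, whose equivalent condition ($e$) is exactly the monoidal commutativity statement: for every pair of irreducible elements $b$ (type $(\mathbf{B})$) and $c$ (type $(\mathbf{C})$) of $\widehat{S}$ there exist irreducibles $b'$ of type $(\mathbf{B})$ and $c'$ of type $(\mathbf{C})$, respectively similar to $b$ and $c$, with $cb = b'c'$. Proposition \ref{yippee} establishes precisely this for $\widehat{S}$ (using Corollary \ref{typeC} to normalise $c$ to a monic polynomial in $\theta$, and then the converse-Eisenstein Lemma \ref{notirr} to extract a monic right divisor $c'$ of $cb$ of the correct degree, with the Jordan–H\"older theorem in the PID $T$ pinning down the types and similarity classes). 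Feeding Proposition \ref{yippee} into Theorem \ref{target}, condition ($a$) — right $(\diamond)$ for $\widehat{S}$ — holds.

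Finally I would obtain the left-hand version. As remarked after Theorem \ref{criterion} and again before this corollary, every definition, lemma and proposition in $\S\S$\ref{injective}–\ref{Trep} has a verbatim left-module counterpart (the taxonomy of irreducibles, the classification Theorems \ref{Bavula} and \ref{onecrit}, Theorem \ref{gather}, Theorem \ref{target}), obtained by passing to the opposite ring — note $\widehat{S}^{\mathrm{op}} \cong k[[X]][\theta;\alpha^{-1}]$ is again of the same form, with $\alpha^{-1}$ still of infinite order. Running Proposition \ref{yippee} for $\widehat{S}^{\mathrm{op}}$ (equivalently, checking the left-side monoidal commutativity $bc = c'b'$, which the symmetric statement of Lemma \ref{notirr} — divisibility on the \emph{left} by a monic polynomial — supplies) then yields left $(\diamond)$ for $\widehat{S}$ by the left analogue of Theorem \ref{target}. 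Combining the two one-sided properties gives that $\widehat{S}$ satisfies $(\diamond)$.

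The only genuine content here is Proposition \ref{yippee}, and its crux — the one step I would flag as the main obstacle — is the existence of the monic right divisor $c'$ of $cb$, i.e. Lemma \ref{notirr}: one must solve the infinite recursive system \eqref{system of quations} for the coefficients $h_{i,p}$ in $k[[X]]$, using that $X \mid g_j$ for all $j > n$ so that at each degree $p$ in $X$ the unknowns $h_{i,p}$ are determined by already-known lower-degree data. Granting Lemma \ref{notirr} (and Corollary \ref{typeC}), everything else is the bookkeeping of comparing composition lengths and similarity classes in the principal ideal domain $T = Q(k[[X]])[\theta;\alpha]$, which is routine given Theorems \ref{Jacobson}, \ref{Bavula} and \ref{onecrit}.
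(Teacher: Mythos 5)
Your proposal is correct and follows essentially the same route as the paper: right $(\diamond)$ from the equivalence of ($a$) and ($e$) in Theorem \ref{target} together with Proposition \ref{yippee}, left $(\diamond)$ by the left--right symmetry of all the preceding results (equivalently, passing to $\widehat{S}^{\mathrm{op}}\cong k[[X]][\theta;\alpha^{-1}]$), and the finite-order case dismissed because $\widehat{S}$ is then module-finite over its centre. The only cosmetic difference is your extra gloss on the finite-order case (the relevant standard fact is simply that a Noetherian ring module-finite over its centre is FBN, hence satisfies $(\diamond)$ by Jategaonkar; finite global dimension plays no role), which matches the paper's unelaborated remark in substance.
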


The fact that $(\diamond)$ holds for $\widehat{S}$ enables one to improve
the description of cyclic faithful critical $\widehat{S}-$modules from that given by Theorem \ref{onecrit}(4)(a) and (5), as follows:

\begin{corollary}\label{cyclic} Let $I$ be a left [resp. right] ideal of $\widehat{S}$ with $\widehat{S}/I$ faithful and 1-critical. Then $I = \widehat{S}z$ [resp. $I = z\widehat{S}$] for an irreducible element $z$ of $\widehat{S}$ (which necessarily will have type $(\mathbf{C})$).
\end{corollary}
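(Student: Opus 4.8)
The plan is to prove the left-ideal case; the right case follows by the left--right symmetric versions of all the results invoked. Write $M:=\widehat{S}/I$, which by hypothesis is faithful and $1$-critical, hence $\mathcal{X}$-torsion free by the argument establishing Theorem \ref{onecrit}(1)(b). Thus $M$ embeds as an essential $\widehat{S}$-submodule in $\widehat{M}:=T\otimes_{\widehat{S}}M=T/TI$ (Proposition \ref{samehull}(1)). Since $\mathrm{Kdim}(M)=1<2=\mathrm{Kdim}(\widehat{S})$ we have $I\neq 0$, so $TI\neq 0$ and $\widehat{M}$ is a nonzero finitely generated torsion module over the principal ideal domain $T$, hence of finite length; and $\widehat{M}$ is uniform, being an essential extension of the uniform module $M$. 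The key point will be that $\widehat{M}$ is a \emph{simple} $T$-module. Granting this, $\widehat{M}\cong T/Tz$ for an irreducible $z$ of $T$ which, by (the left analogue of) Lemma \ref{irreducible}(2), we may take in $\widehat{S}\setminus X\widehat{S}$, so that $z$ is irreducible in $\widehat{S}$. Then $I\subseteq\widehat{S}\cap Tz=\widehat{S}z$ by (the left analogue of) Lemma \ref{intersect}(1), giving a surjection $M=\widehat{S}/I\twoheadrightarrow\widehat{S}/\widehat{S}z$. Here $z$ cannot be type $(\mathbf{A})$ (then $\widehat{M}$ would be annihilated by the nonzero ideal $\widehat{S}\theta$, forcing $M$ unfaithful via the Artin--Rees property) nor type $(\mathbf{B})$ (then $\widehat{M}\cong\widehat{S}/\widehat{S}z$ would be a simple $\widehat{S}$-module containing the Krull-dimension-one module $M$); so $z$ is type $(\mathbf{C})$ and $\widehat{S}/\widehat{S}z$ is $1$-critical by Theorem \ref{onecrit}(2). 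A surjection from a $1$-critical module onto a module of the same Krull dimension is an isomorphism, so $I=\widehat{S}z$ with $z$ of type $(\mathbf{C})$, as required.

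It remains to prove that $\widehat{M}$ is simple, and this is the only place where $(\diamond)$ for $\widehat{S}$ enters. Let $L:=\mathrm{soc}_T(\widehat{M})$, a simple $T$-module. As in the previous paragraph, $L$ is not type $(\mathbf{A})$ (faithfulness of $M$) and is not type $(\mathbf{B})$: a type $(\mathbf{B})$ $L$ is simple as an $\widehat{S}$-module, so $M\cap L\neq 0$ forces $L\subseteq M$, contradicting the fact that a $1$-critical module has zero socle. Hence $L$ is type $(\mathbf{C})$, and as an $\widehat{S}$-module $L\cong\widehat{S}/\widehat{S}z_0$ for a type $(\mathbf{C})$ irreducible $z_0$, a $1$-critical $\widehat{S}$-module. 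Suppose, for a contradiction, that $\widehat{M}\neq L$, and put $Q:=\widehat{M}/L\neq 0$, a finite-length, $\mathcal{X}$-torsion-free $T$-module. I claim $Q$ has zero socle as an $\widehat{S}$-module. Indeed, let $G$ be any simple $T$-submodule of $Q$, with preimage $L\subseteq F\subseteq\widehat{M}$, $F/L\cong G$; then $F$ is uniform (a submodule of the uniform $\widehat{M}$) of length $2$ over $T$, hence uniserial, with socle $L$ of type $(\mathbf{C})$. If $G$ were type $(\mathbf{A})$ then $\mathrm{Ext}^1_T(G,L)=0$ by Corollary \ref{heck}, so $F$ would split, contradicting uniformity; if $G$ were type $(\mathbf{B})$ then $F$ would be a uniserial left $T$-module of length $2$ with type $(\mathbf{C})$ socle and type $(\mathbf{B})$ image, which is excluded by Theorem \ref{target}(c), since $\widehat{S}$ satisfies $(\diamond)$ by Corollary \ref{done}. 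So every simple $T$-submodule of $Q$ is type $(\mathbf{C})$. Now any simple $\widehat{S}$-submodule of $Q$ is $\mathcal{X}$-torsion free (as $Q$ is), hence faithful, hence a (type $(\mathbf{B})$) simple $T$-module by Lemma \ref{both}, hence a simple $T$-submodule of $Q$ -- impossible. So $\mathrm{soc}_{\widehat{S}}(Q)=0$.

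To conclude, observe that if $M\not\subseteq L$ then $M/(M\cap L)$ is a nonzero proper factor of the $1$-critical module $M$, hence has Krull dimension $0$ and so is a nonzero finite-length $\widehat{S}$-module; but the composite $M\hookrightarrow\widehat{M}\twoheadrightarrow Q$ has kernel $M\cap L$, so $M/(M\cap L)$ embeds in $Q$, which has zero socle -- a contradiction. Hence $M\subseteq L$, so $\widehat{M}=TM\subseteq L$ and $\widehat{M}=L$ is simple, completing the argument.

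The main obstacle is precisely the simplicity of $\widehat{M}$. Krull-dimension bookkeeping and the Artin--Rees property rule out type $(\mathbf{A})$ layers, but they do not prevent a type $(\mathbf{B})$ simple $T$-module from sitting on top of the type $(\mathbf{C})$ socle of $\widehat{M}$; it is exactly the monoidal-commutativity consequence of $(\diamond)$ for $\widehat{S}$ -- in the form of Theorem \ref{target}(c) -- that excludes this last possibility, and hence forces the localisation $\widehat{M}$ to have composition length one.
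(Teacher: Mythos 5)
Your argument is sound in substance and shares the paper's overall skeleton: both proofs reduce the corollary to showing that $T/TI$ is a simple $T$-module, then pull a generator of $TI$ back into $\widehat{S}$ (Lemmas \ref{irreducible} and \ref{intersect}) and finish using the $1$-criticality of $\widehat{S}/\widehat{S}z$ from Theorem \ref{onecrit}(2). Where you genuinely diverge is the simplicity step. The paper applies Theorem \ref{target}(d) -- every proper factor of the cyclic $1$-critical module $M$ is finite dimensional, hence killed by a power of $X$ -- and then a short $X$-divisibility computation inside $T\otimes_{\widehat{S}}M$ shows any nonzero $T$-submodule is everything. You instead analyse the $T$-socle $L$ of $T/TI$, show it is of type $(\mathbf{C})$, exclude a second layer of type $(\mathbf{A})$ via Corollary \ref{heck} and of type $(\mathbf{B})$ via Theorem \ref{target}(c), and then force $M\subseteq L$ by a Krull-dimension/socle argument. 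Both mechanisms ultimately rest on Corollary \ref{done} through Theorem \ref{target} (the paper through (d), you through (c)); yours is longer but makes explicit exactly which extension $(\diamond)$ is excluding, while the paper's is more computational and shorter.

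Two blemishes should be repaired. First, from ``$\widehat{M}\cong T/Tz$'' (the left analogue of Lemma \ref{irreducible}(2)) you may not conclude $I\subseteq Tz$: that requires the equality of left ideals $TI=Tz$, not merely an isomorphism of quotients, since similar but non-associated irreducibles give isomorphic quotients of $T$ by \emph{different} left ideals. The fix is what the paper does: $T$ is a left PID, so the maximal left ideal $TI$ equals $Tw$ for some irreducible $w$; writing $w=X^{-n}s$ with $s\in\widehat{S}$ and stripping left powers of $X$ from $s$ gives $TI=Tz$ with $z\in\widehat{S}\setminus X\widehat{S}$, which is then irreducible in $\widehat{S}$ by Lemma \ref{irreducible}(1). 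Second, your parenthetical assertion that $L\cong\widehat{S}/\widehat{S}z_0$ \emph{as $\widehat{S}$-modules} is false: for $z_0$ of type $(\mathbf{C})$ one has $\widehat{S}\neq X\widehat{S}+\widehat{S}z_0$, so $\widehat{S}/\widehat{S}z_0$ is not $X$-divisible, whereas $L$, being a $T$-module, is; $L$ only \emph{contains} a copy of $\widehat{S}/\widehat{S}z_0$. Fortunately nothing downstream uses that identification -- you only need $L\cong T/Tz_0$ with $z_0$ of type $(\mathbf{C})$ -- so deleting the remark leaves the argument intact.
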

\begin{proof} We prove the version for left ideals; the right hand version is obtained by interchanging left and right throughout.  Let $M = \widehat{S}/I$ be a cyclic faithful 1-critical left $\widehat{S}-$module. By
the left module version of Theorem \ref{onecrit}(1)(b), $M$ is hull-similar to $\widehat{S}/\widehat{S}c$ for a type ($\mathbf{C}$) irreducible element $c$ of $\widehat{S}$. In particular,

\begin{equation}\label{free} M \textit{ is } \mathcal{X}-\textit{torsion free.}
\end{equation}

The equivalent statements (a) - (e)  of Theorem \ref{target} all hold, by Corollary \ref{done}. By Theorem \ref{target}(d), every proper factor of $M$ is a finite dimensional vector space, and in particular $\mathcal{X}-$torsion. We claim that this implies that
\begin{equation}\label{small} T/TI \textit{ is a simple } T-\textit{module.}
\end{equation}
For, let $N$ be a non-zero $T$-submodule of $T/TI$. Then $N \cap M \neq 0$ by (\ref{free}), so that, by Theorem \ref{target}(d) again,
\begin{equation}\label{size} \mathrm{dim}_k(M/N \cap M) < \infty.
\end{equation}
Let $y \in T \otimes_{\widehat{S}} M$. Thus there exists $n \geq 1$ such that $X^n y \in M$ and then, by (\ref{size}), there exists $t \geq 1$ such that
$$ X^t(X^n y) \in N. $$
But $N$ is a $T-$submodule of $T \otimes_{\widehat{S}} M$, so $y = X^{-(t+n)}X^{t+n}y \in N$. Thus $N = T \otimes_{\widehat{S}} M$ and (\ref{small}) follows.

In view of (\ref{small}), $TI = Tz$ for an irreducible element $z$, which we can choose to be in $\widehat{S}$ by Lemma \ref{irreducible}. That is, $z \in TI \cap \widehat{S} = I$, and $z$ has type ($\mathbf{C}$) since otherwise it has type $(\mathbf{B})$ and then $\widehat{S}/\widehat{S}z$ is simple by the left module version of Theorem \ref{Bavula}(2), contradicting the fact that $\widehat{S}/I$ is 1-critical. Therefore $\widehat{S}/\widehat{S}z$ is itself 1-critical by the left module version of Theorem \ref{onecrit}(2), and hence $\widehat{S}z = I$, as required.

\end{proof}

\section*{Acknowledgements}
Part of this work was done when the second author visited the University of Glasgow in February 2020 supported by a FCT grant
SFRH/BSAB/150375/2019. She would like to thank the Univerity and its staff for their hospitality. This work was also partially supported by CMUP, which is financed by national funds through FCT--Funda\c{c}\~ao para a Ci\^encia e a Tecnologia, I.P., under the project with reference UIDB/00144/2020. The work of the first author was partially supported by Leverhulme Emeritus Fellowship EM-2017-081/9.

\begin{bibdiv}
 \begin{biblist}
 \bib{Bavula}{article}{
   author={Bavula, V.},
   author={Van Oystaeyen, F.},
   title={The simple modules of certain generalized crossed products},
   journal={J. Algebra},
   volume={194},
   date={1997},
   number={2},
   pages={521--566},
   issn={0021-8693},
   review={\MR{1467166}},
   doi={10.1006/jabr.1997.7038},
}
\bib{BCM}{article}{
   author={Brown, K.A.},
   author={Carvalho, P. A. A. B.},
   author={Matczuk, J.},
   title={Simple modules and their essential extensions for skew polynomial
   rings},
   journal={Math. Z.},
   volume={291},
   date={2019},
   number={3-4},
   pages={877--903},
   issn={0025-5874},
   review={\MR{3936092}},
   doi={10.1007/s00209-018-2128-8},
}


\bib{Cohn_NUFD}{article}{
   author={Cohn, P. M.},
   title={Noncommutative unique factorization domains},
   journal={Trans. Amer. Math. Soc.},
   volume={109},
   date={1963},
   pages={313--331},
   issn={0002-9947},
   review={\MR{155851}},
   doi={10.2307/1993910},
}

\bib{Cohn}{book}{
   author={Cohn, P. M.},
   title={Free Ideal Rings and Localization in General Rings},
   series={New Mathematical Monographs},
   volume={3},
   publisher={Cambridge University Press, Cambridge},
   date={2006},
   pages={xxii+572},
   isbn={978-0-521-85337-8},
   isbn={0-521-85337-0},
   review={\MR{2246388}},
   doi={10.1017/CBO9780511542794},
}

\bib{Eisenbud}{book}{
   author={Eisenbud, D.},
   title={Commutative Algebra with a View toward Algebraic Geometry},
   series={Graduate Texts in Mathematics},
   volume={150},  
   publisher={Springer-Verlag, New York},
   date={1995},
   pages={xvi+785},
   isbn={0-387-94268-8},
   isbn={0-387-94269-6},
   review={\MR{1322960}},
   doi={10.1007/978-1-4612-5350-1},
}

\bib{GW}{book}{
   author={Goodearl, K. R.},
   author={Warfield, R. B., Jr.},
   title={An Introduction to Noncommutative Noetherian Rings},
   series={London Mathematical Society Student Texts},
   volume={61},
   edition={2},
   publisher={Cambridge University Press, Cambridge},
   date={2004},
   pages={xxiv+344},
   isbn={0-521-83687-5},
   isbn={0-521-54537-4},
   review={\MR{2080008}},
   doi={10.1017/CBO9780511841699},
}

 \bib{KaplanskyC}{book}{
   author={Kaplansky, I.},
   title={Commutative Rings},
   edition={Revised edition},
   publisher={The University of Chicago Press, Chicago, Ill.-London},
   date={1974},
   pages={ix+182},
   review={\MR{0345945}},
}

\bib{Kovacic}{article}{
   author={Kovacic, J.},
   title={An Eisenstein criterion for noncommutative polynomials},
   journal={Proc. Amer. Math. Soc.},
   volume={34},
   date={1972},
   pages={25--29},
   issn={0002-9939},
   review={\MR{292803}},
   doi={10.2307/2037888},
}

\bib{Jacobson}{book}{
   author={Jacobson, N.},
   title={The Theory of Rings},
   series={American Mathematical Society Mathematical Surveys, vol. II},
   publisher={American Mathematical Society, New York},
   date={1943},
   pages={vi+150},
   review={\MR{0008601}},
}

\bib{FBN}{article}{
   author={Jategaonkar, A. V.},
   title={Jacobson's conjecture and modules over fully bounded Noetherian
   rings},
   journal={J. Algebra},
   volume={30},
   date={1974},
   pages={103--121},
   issn={0021-8693},
   review={\MR{352170}},
   doi={10.1016/0021-8693(74)90195-1},
}

\bib{JurekLeroy}{article}{
   author={Leroy, A.},
   author={Matczuk, J.},
   title={Primitivity of skew polynomial and skew Laurent polynomial rings},
   journal={Comm. Algebra},
   volume={24},
   date={1996},
   number={7},
   pages={2271--2284},
   issn={0092-7872},
   review={\MR{1390373}},
   doi={10.1080/00927879608825699},
}

\bib{McCR}{book}{
   author={McConnell, J. C.},
   author={Robson, J. C.},
   title={Noncommutative Noetherian Rings},
   series={Graduate Studies in Mathematics},
   volume={30},
   edition={Revised edition},
   note={With the cooperation of L. W. Small},
   publisher={American Mathematical Society, Providence, RI},
   date={2001},
   pages={xx+636},
   isbn={0-8218-2169-5},
   review={\MR{1811901}},
   doi={10.1090/gsm/030},
}

\bib{Matlis}{article}{
   author={Matlis, E.},
   title={Injective modules over noetherian rings},
   journal={Pacific J. Math.},
   volume={8},
   date={1958},
   number={},
   pages={511-528},
   issn={},
   review={\MR{0099360}},
   doi={},
}

\bib{Musson80}{article}{
   author={Musson, I. M.},
   title={Injective modules for group rings of polycyclic groups. II},
   journal={Quart. J. Math. Oxford Ser. (2)},
   volume={31},
   date={1980},
   number={124},
   pages={449--466},
   issn={0033-5606},
   review={\MR{596979}},
   doi={10.1093/qmath/31.4.429},
}

\bib{Ore}{article}{
   author={Ore, O.},
   title={Theory of noncommutative polynomials},
   journal={Annals of Mathematics},
   volume={34},
   date={1933},
   number={3},
   pages={480-508},
}

\bib{PP_99}{article}{
   author={Prest, M.},
   author={Puninski, G.},
   title={Some model theory over hereditary Noetherian domains},
   journal={J. Algebra},
   volume={211},
   date={1999},
   number={1},
   pages={268--297},
   issn={0021-8693},
   review={\MR{1656581}},
   doi={10.1006/jabr.1998.7607},
}

 \end{biblist}
\end{bibdiv}

\bigskip

  \textsc{School of Mathematics and Statistics, University of Glasgow, Glasgow G12 8SQ, Scotland}\par\nopagebreak
  \textit{E-mail address}: \texttt{Ken.Brown@glasgow.ac.uk}

  \medskip
   \textsc{CMUP, Departamento de Matem\'atica, Faculdade de Ci\^encias, Universidade do Porto,  rua do Campo Alegre s/n, 4169-007 Porto, Portugal}\par\nopagebreak
  \textit{E-mail address}: \texttt{pbcarval@fc.up.pt}

  \medskip

   \textsc{Institute of Mathematics, University of Warsaw,  Banacha 2, 02-097 Warsaw, Poland}
  \par\nopagebreak
  \textit{E-mail address}:   \texttt{jmatczuk@mimuw.edu.pl}

  \medskip

 \end{document}